\numberwithin{equation}{section}
\theoremstyle{plain}
\newtheorem{thm}{Theorem}[section]
\newtheorem{remark}[thm]{Remark}
\newtheorem{defn}[thm]{Definition}
\newtheorem{ppn}[thm]{Proposition}
\newtheorem{lem}[thm]{Lemma}
\newcommand{\E}{\mathbb{E}}
\newcommand{\N}{\mathbb{N}}
	\renewcommand{\P}{\mathbb{P}}
\newcommand{\R}{\mathbb{R}}
\newcommand{\Z}{\mathbb{Z}}
\newcommand{\cB}{\mathcal{B}}
\newcommand{\cC}{\mathcal{C}}
\newcommand{\cN}{\mathcal{N}}
\newcommand{\NN}{\mathcal{N}}
\def\dd{\mbox{d}}
\newcommand{\one}{{\bf 1}}
\newcommand{\os}[2]{\overset{\text{{\tiny #1}}}{#2}}
\def\fa{\textcolor{black}}
\begin{document}

\begin{frontmatter}

\title{Persistence exponents in Markov chains}
\runtitle{Persistence exponents in Markov chains}
\thankstext{m1}{Research partially supported by DFG grant AU370/4-1.}
\thankstext{m2}{Research partially supported by NSF grant DMS-1712037.}
\thankstext{m3}{Research partially supported by grant 147/17 from the Israel Science Foundation and by a US-Israel BSF grant.}
\date{October 15, 2018}
\begin{aug}
\author{\fnms{Frank} \snm{Aurzada}\thanksref{m1}\ead[label=e1]{aurzada@mathematik.tu-darmstadt.de}},
\author{\fnms{Sumit} \snm{Mukherjee}\thanksref{m2}\ead[label=e2]{sm3949@columbia.edu}}
\and
\author{\fnms{Ofer} \snm{Zeitouni}\thanksref{m3}\ead[label=e3]{ofer.zeitouni@weizmann.ac.il }}
\runauthor{Aurzada, Mukherjee and Zeitouni}
 
\affiliation{Technical University of Darmstadt\thanksmark{m1}; Columbia University\thanksmark{m2};  Weizmann Institute of Science\thanksmark{m3}}

%
%

\address{Fachbereich Mathematik\\ Technische Universit\"at Darmstadt,\\ Darmstadt, Germany\\ E-mail: \href{mailto:aurzada@mathematik.tu-darmstadt.de }{aurzada@mathematik.tu-darmstadt.de }}

\address{Department of Statistics,\\ Columbia University,\\ New York, USA\\ E-mail: \href{mailto:sm3949@columbia.edu }{sm3949@columbia.edu } }

\address{Department of Mathematics,\\ Weizmann Institute of Science,\\ Rehovot, Israel \\ E-mail: \href{ofer.zeitouni@weizmann.ac.il }{ofer.zeitouni@weizmann.ac.il  } }

\end{aug}

\begin{abstract}
  We prove the existence of the \textit{persistence exponent}
$$\log\lambda:=\lim_{n\to\infty}\frac{1}{n}\log \P_\mu(X_0\in S,\ldots,X_n\in S)$$ 
for a class of time homogeneous Markov chains 
$\{X_i\}_{i\geq 0}$ taking values in a Polish space, where $S$ is a
Borel measurable set and $\mu$ is an initial distribution.
Focusing on the case of  AR($p$) and MA($q$) processes with $p,q\in \N$ and continuous innovation
distribution, we study the existence of $\lambda$ and its continuity in the parameters of the AR and MA processes, respectively, for $S=\R_{\geq 0}$. For AR processes with log-concave innovation distribution, 
  we prove the strict monotonicity of $\lambda$. Finally, we compute new explicit exponents in several concrete examples.


\end{abstract}

\begin{keyword}[class=MSC]
\kwd[Primary ]{60J05, 60F10}
\kwd[; secondary ]{45C05, 47A75}
\end{keyword}

\begin{keyword}
\kwd{ARMA}
\kwd{eigenvalue problem}
\kwd{integral equation}
\kwd{large deviations}
\kwd{Markov chain}
\kwd{persistence}
\kwd{quasi-stationary distribution}
\end{keyword}

\end{frontmatter}

\section{Introduction}\label{sec:intro}

Let $\{X_i\}_{i\ge 0}$ be a time homogenous Markov chain on a Polish
space with transition kernel $P(x,\dd y)$.  For a given Borel measurable set $S$, we are interested in the asymptotics of the persistence probability 
$$ p_n(P,S,\mu):=\P_\mu(X_i\in S,0\le i\le n)=\int_{S^{n+1}}P(x_i,\dd x_{i+1})\mu(\dd x_0),$$ where $\mu$ is the initial distribution, i.e. the law of $X_0$. We stress that we shall be particularly interested in non-compact $S$. 
We will be interested in the existence of the 
  \textit{persistence exponent} $\lambda=\lambda(P,S,\mu)$, defined as
  \begin{equation}
    \log\lambda(P,S,\mu):=\lim_{n\to\infty}\frac{1}{n}\log p_n(P,S,\mu)
  \end{equation}
and its continuity and monotonicity properties in parameters of the kernel. 

The asymptotics of persistence probabilities 
for not necessarily Markov processes
has received both classical and recent interest in probability theory and theoretical physics. For recent surveys on persistence probabilities we refer the reader to \cite{BMS} for a theoretical physics point of view and to \cite{Aurzada_Simon} for a review of the mathematical literature. \fa{ We note in passing that outside the literature on Markov processes, a big component of the literature on persistence is that for Gaussian processes, with links in the stationary case to their spectral properties
 (see \cite{DM, DM2,FF,FFN,FFNNS} and references therein for some recent developments in this area).}

Our approach exploits the Markovian structure and relates
  the persistence exponent to an eigenvalue of an appropriate operator, via
  the Krein-Rutman theorem. Such ideas have been extensively employed to study general versions of 
  the persistence problem for Markov processes, 
  under the name of quasi-stationary distributions (see Tweedie  
\cite{Tweedie,Tweedie2}, and  for more recent work, see
e.g.\ \cite{CV,CMS,MV}). 
 We work under somewhat different assumptions than is typical 
 in that literature, for the sake of the applications that we have in mind, which are MA (moving average) processes and AR (auto regressive) processes of finite order (to be defined later).
  In particular, we do not assume that the operator is irreducible;
and much of our effort lies in deriving the existence of 
the persistence exponent and its properties directly in terms of the kernel. 
The quasi-stationary approach developed in 
\cite{Tweedie,Tweedie2}  shows, under assumptions that are not always satisfied
in the examples that we consider, the equivalence of the exponent's 
existence and  properties of the eigenvalue equation determined by $P_S$ (c.f. \eqref{eq-P_Sdef} for the definition of $P_S$).
One of our key observations is that, 
even in very natural examples as in Section
  \ref{sec:eg-proof}, we often
need to work not with $P_S$ but rather with a modification of it. In addition,
the existing literature is focused on persistence exponents for
Dirac initial conditions,
whereas we in general require
the initial distribution $\mu$ to charge 
all open sets. 
  If the operator $P_S$ is assumed to be irreducible, then all our 
  results apply to degenerate initial distributions. Even in the irreducible 
  case,
  the persistence 
  exponent need not exist for general initial distributions; see
  Proposition \ref{ppn:ar_not_strict} for an example where the persistence
  exponent exists and is universal if the initial distribution is an atom,
  but does not need to not exist for general initial distributions.  

\fa{A more detailed study of persistence for AR processes of order $1$
 is provided by the very recent paper \cite{HKW}. See also the works \cite{baumgartendissertation,baumgartenar}, where the author studies persistence of AR processes for general innovation distributions with orders $\{1,2\}$.  Of relevance is the recent work \cite{CV}, that gives general criteria for convergence in total variation norm to a quasi-stationary distribution, for continuous time Markov processes. We do note that in our general setup, we cannot verify their conditions. In general, it is not clear that the existence of the persistence exponent, even under our conditions, would imply the existence of a Yaglom limit, i.e. of a  quasi-limiting distributions. Also related to our persistence problem is the study of the population size in critical branching processes, see e.g.\ \cite{afanasev}, also see \cite{maillard} for a recent work concerning Yaglom limits in this context. }

{One upshot of our approach is a study of 
monotonicity and continuity properties of the persistence exponent in parameters of the kernel $P$. 
We illustrate this in the case of MA processes and AR processes, where the kernel (and thus the persistence exponent) depends on the coefficient vector. In the setting of AR processes, we derive a monotonicity lemma 
  (Lemma~\ref{lem:logconcave}) that might be of independent interest. }
    As an application, we prove \textit{strict} monotonicity of the
  persistence exponent for AR($p$) processes with log concave innovation distributions.
Finally, we demonstrate the strength of our approach by computing a number of new persistence exponents in concrete examples by solving the corresponding eigenvalue equation.

The outline of the paper is as follows: Section~\ref{sec:mainexistenceresults}
contains our main abstract existence result.
The short and technical Section~\ref{sec:mainmonotonicityresult} 
contains an abstract
monotonicity lemma and a continuity lemma.
The abstract framework is
then applied in Section~\ref{sec:eg} to 
moving-average (MA) processes and auto-regressive (AR), where existence of the exponent, continuity of the exponent, (strict) monotonicity results, and the question whether the exponent is degenerate are discussed. Finally, Section~\ref{sec:comp} contains a number of concrete cases where we are able to solve the eigenvalue equation, i.e.\ to find the leading eigenvalue explicitly.
Sections~\ref{sec:intro-proof}--\ref{sec:comp-proof} are devoted to the proofs corresponding to the former three topics, respectively.

\subsection{Existence of the exponent} \label{sec:mainexistenceresults}
We begin with a definition. Throughout, $\|g\|_\infty$ denotes the sup norm of
a function $g$ on $S$.


\begin{defn}\label{def:operator}
Let $\cB(S)$ denote the set of all bounded measurable 
functions on $S$, and let $\cC_b(S)\subset \cB(S)$ denote the 
space of continuous bounded functions on $S$ equipped with the sup norm.
For a bounded linear operator $K$ mapping $\cB(S)$ to itself, define
  the operator norm  $$\|K\|:=\sup_{g\in \cB(S):\|g\|_{\infty}
\le 1}\|Kg\|_\infty,$$
and  the spectral radius
\begin{equation}
  \label{specraddef}
  \lambda(K):=\lim_{n\rightarrow\infty}\|K^n\|^{1/n}.
\end{equation}
\end{defn}
\noindent
Note that the limit in \eqref{specraddef}
exists by sub-additivity, and that $\lambda(K)\le \|K\|$.
 Note also that $p_n(P,S,\mu)= \int_S P_S^n \one (x) \mu(\dd x)$, where
$P_S$ is the linear operator on $\cB(S)$ defined by
\begin{equation}
  \label{eq-P_Sdef}
  [P_S(g)](x):=\int_{S}g(y)P(x,{\rm d} y),\qquad x\in S, 
\end{equation}
while, for comparison, the spectral radius satisfies
$$
\lambda(P,S) :=\lambda(P_S)= \lim_{n\to\infty} \left( \sup_{x\in S} P_S^n \one (x) \right)^{1/n}.
$$

%

%
%

We recall that an operator $K$ from $\cC_b(S)$ to itself is called compact if for any sequence $\{g_n\}_{n\geq 1}$ in $\cC_b(S)$ with $\|g_n\|_{\infty}
\leq 1$ one finds a subsequence $\{n_k\}_{k\ge 1}$ such that $\{K g_{n_k}\}_{k\ge 1}$ converges in sup norm.

\begin{thm}\label{thm:exponent}
Assume the following conditions:
\begin{enumerate}
\item[(i)]
$K$ is a non-negative linear operator which maps $\cC_b(S)$ into itself, and 
$K^k$ is compact for some $k\geq 1$.
\item[(ii)] 
 $\mu$ is a probability measure such that
 $\mu(U)>0$ for any non empty open set $U\subseteq S$. 
\end{enumerate}
Then,
\begin{align}\label{eq:exponent1}
\lim_{n\rightarrow\infty}\frac{1}{n}\log \left(\int_S K^n{\bf 1}(x)\mu(\dd x)\right)= \log\lambda(K).
\end{align}
Further, if $\lambda(K)>0$, then $\lambda(K)$ is 
the largest eigenvalue of the operator $K$, the corresponding 
eigenfunction  $\psi\in \cC_b(S)$ is non-negative,
and there exists
a bounded, non-negative, finitely additive regular
measure $m$ on $S$  which is a left eigenvector of $K$ corresponding to the eigenvalue $\lambda(K)$, i.e. 
$$\int_Sm({\rm d}x)\int_S K(x,{\rm d} y)f(y)=\lambda(K)m(f), \quad
f\in \cC_b(S).$$
\end{thm}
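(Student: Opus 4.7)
The plan is to invoke the Krein--Rutman theorem for the compact positive operator $K^k$ on the real Banach space $\cC_b(S)$ ordered by the cone $C:=\{f\in \cC_b(S): f\ge 0\}$. This cone is norm-closed and reproducing (any $g\in \cC_b(S)$ decomposes as $g^+-g^-$), and $K^k$ preserves it and is compact by assumption. When $\lambda(K)>0$, one has $\lambda(K^k)=\lambda(K)^k>0$ by the spectral mapping theorem, so Krein--Rutman yields $\psi_0\in C\setminus\{0\}$ with $K^k\psi_0=\lambda(K)^k\psi_0$.

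To lift $\psi_0$ to an eigenfunction of $K$ at eigenvalue $\lambda(K)$, set
\begin{equation*}
\psi := \sum_{j=0}^{k-1} \lambda(K)^{-j} K^j \psi_0.
\end{equation*}
A direct computation using $K^k\psi_0=\lambda(K)^k\psi_0$ telescopes the shifted sum and yields $K\psi=\lambda(K)\psi$. The function $\psi$ is non-negative as a non-negative combination of the $K^j\psi_0\in C$, belongs to $\cC_b(S)$ since $K$ preserves that space, and is non-zero because $\psi_0$ appears as the $j=0$ summand while all summands are non-negative. That $\lambda(K)$ is the \emph{largest} eigenvalue then follows from its being the spectral radius: for any eigenvalue $\beta$ with eigenvector $\phi$, $|\beta|^n\|\phi\|_\infty=\|K^n\phi\|_\infty\le \|K^n\|\,\|\phi\|_\infty$, giving $|\beta|\le\lambda(K)$.

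For the left eigenmeasure I would run the same argument in the dual Banach space $\cC_b(S)^*$ of bounded, finitely additive, regular signed measures. By Schauder's theorem $(K^*)^k=(K^k)^*$ is compact, and it preserves the norm-closed reproducing cone $M_+$ of non-negative such measures (reproducing via a Jordan-type decomposition, preservation because $K\ge 0$ on functions implies $K^*\ge 0$ on measures). Krein--Rutman then delivers $m_0\in M_+\setminus\{0\}$ with $(K^*)^k m_0=\lambda(K)^k m_0$, and the averaging trick produces $m:=\sum_{j=0}^{k-1}\lambda(K)^{-j}(K^*)^j m_0$ with $K^* m=\lambda(K) m$.

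For the asymptotic \eqref{eq:exponent1}, the upper bound $\int_S K^n\one\,d\mu\le \|K^n\one\|_\infty\le \|K^n\|$ together with the definition of $\lambda(K)$ gives $\limsup_n\tfrac{1}{n}\log\int_S K^n\one\,d\mu\le \log\lambda(K)$, covering the degenerate case $\lambda(K)=0$ as well. For the matching lower bound when $\lambda(K)>0$, the pointwise inequality $\one\ge \psi/\|\psi\|_\infty$ and non-negativity of $K$ give $K^n\one\ge \lambda(K)^n\psi/\|\psi\|_\infty$, so $\int_S K^n\one\,d\mu\ge \lambda(K)^n\int_S\psi\,d\mu/\|\psi\|_\infty$; continuity and non-vanishing of $\psi$ produce an open set on which $\psi$ is bounded below by a positive constant, and hypothesis (ii) then forces $\mu$ to charge that set, so $\int\psi\,d\mu>0$. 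The main difficulty I anticipate is the clean verification of the Krein--Rutman hypotheses in the dual, since $\cC_b(S)^*$ is strictly larger than the space of countably additive Radon measures when $S$ is non-compact, and one must keep track of only finitely additive regularity rather than countable additivity for the left eigenmeasure $m$.
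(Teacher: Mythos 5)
Your proposal is correct and essentially replicates the paper's argument: apply Krein--Rutman to the compact operator $K^k$ (after noting $\lambda(K^k)=\lambda(K)^k$ via Gelfand's formula), lift the resulting eigenfunction to an eigenfunction of $K$ by the telescoping sum, bound $K^n\one$ below by $\lambda(K)^n\psi/\|\psi\|_\infty$, and invoke assumption (ii) plus continuity of $\psi$ to guarantee $\int\psi\,d\mu>0$. The only cosmetic divergence is that you derive the left eigenmeasure by running Krein--Rutman directly in $\cC_b(S)^*$ via Schauder compactness and the reproducing cone of nonnegative finitely additive regular measures, whereas the paper offloads this to a cited exercise in Deimling together with the Dunford--Schwartz identification of the dual; the mathematical content is identical.
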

\begin{remark}
  a) Replacing $K$ by $P_S$ in Theorem \ref{thm:exponent} yields 
  a sufficient condition for the existence of a universal  persistence exponent for all initial conditions $\mu$ satisfying condition (ii). As we will see in Section \ref{sec:eg}, this is not always the best choice.\\
 b) The assumption of compactness of $K^k$ for some $k$ (rather than the compactness of $K$ itself) is, on the one hand, sufficient for the proof to go through and, on the other hand, necessary for dealing with some concrete examples. For example, the operator $P_S$ related to MA($q$) processes is typically not compact, whereas $P_S^{q+1}$ is compact.\\
c) The left eigenvector $m$ in Theorem \ref{thm:exponent} 
is only finitely additive.
This is a consequence of the fact that $S$ can be (and typically is, in our applications) non-compact. This complicates some of the following arguments. For example, 
the proof of Proposition \ref{ppn:example_ma_exponential_allq} would be immediate if $m$ were a measure.
\end{remark}


%

\subsection{Properties of exponents} \label{sec:mainmonotonicityresult}



We begin with a definition.
 \begin{defn}

Suppose $S$  is equipped with a partial order $\le_S$. 
Let $\cB_{+,>}(S)$ denote the class of bounded, non-negative, non-decreasing (in the sense of this partial order) measurable functions on $S$.

 A non-negative bounded linear operator $K$ on $\cB(S)$ is said to be
 non-decreasing  with respect to the partial order $\le_S$, if $K$ maps
 $\cB_{+,>}(S)$ to itself.
\end{defn}
The following lemma gives a sufficient condition for comparing $\lambda(K_1)$ and $\lambda(K_2)$ for two bounded non-negative linear operators  $K_1,K_2$. 


\begin{lem}\label{lem:monotone}
Let $K_1$ and $K_2$ be two bounded non-negative linear operators  on $\cB(S)$, such that the following conditions hold:
\begin{enumerate}
\item[(i)]
There exists a non-negative measurable function $h$ on $S$ such that 
$[K_1(g)](x)\ge h(x)[K_2(g)](x)$ for any $x\in S, g\in \cB_{+,>}(S)$.

\item[(ii)]
$K_1$ is  non-decreasing on $S$.
\end{enumerate}

Then for any $g\in \cB_{+,>}(S)$ we have
$K_1^n(g)\ge K_{2,h}^n(g),$
where $[K_{2,h}(g)](x):=h(x)[K_2(g)](x)$.

\end{lem}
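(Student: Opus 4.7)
The plan is a straightforward induction on $n$, with the key point being the careful bookkeeping of which functions lie in $\cB_{+,>}(S)$ so that hypothesis (i) can be applied at each step.

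For the base case $n=1$, the inequality $K_1(g)(x) \ge h(x)[K_2(g)](x) = [K_{2,h}(g)](x)$ is exactly hypothesis (i), applied to $g \in \cB_{+,>}(S)$.

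For the inductive step, suppose $K_1^n(g) \ge K_{2,h}^n(g)$ for a fixed $g \in \cB_{+,>}(S)$. Condition (ii) implies by iteration that $K_1^n(g) \in \cB_{+,>}(S)$, so hypothesis (i) is applicable to $K_1^n(g)$, yielding
\begin{equation*}
K_1^{n+1}(g)(x) \;=\; [K_1(K_1^n g)](x) \;\ge\; h(x)[K_2(K_1^n g)](x) \;=\; [K_{2,h}(K_1^n g)](x).
\end{equation*}
It remains to compare $K_{2,h}(K_1^n g)$ with $K_{2,h}(K_{2,h}^n g)$. Here I use the elementary fact that any non-negative linear operator on $\cB(S)$ is order-preserving on non-negative functions (by linearity applied to the difference), and hence $K_2$ is order-preserving; multiplying by the non-negative function $h$ preserves the inequality, so $K_{2,h}$ is also order-preserving on non-negative functions. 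The inductive hypothesis gives $K_1^n g \ge K_{2,h}^n g \ge 0$, hence
\begin{equation*}
[K_{2,h}(K_1^n g)](x) \;\ge\; [K_{2,h}(K_{2,h}^n g)](x) \;=\; K_{2,h}^{n+1}(g)(x).
\end{equation*}
Chaining the two inequalities closes the induction.

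The only subtle point, and the main conceptual step, is recognizing that the induction must be carried through $K_1$ rather than $K_{2,h}$ on the outside: one expands $K_1^{n+1}(g) = K_1(K_1^n g)$ and applies hypothesis (i) to $K_1^n g$, which is legitimate because (ii) guarantees $K_1^n g \in \cB_{+,>}(S)$. Attempting the opposite expansion, writing $K_{2,h}^{n+1}(g) = K_{2,h}(K_{2,h}^n g)$ and trying to push (i) inward there, would require $K_{2,h}^n g$ to lie in $\cB_{+,>}(S)$, which is not among the hypotheses. No measure-theoretic or functional-analytic machinery beyond pointwise monotonicity of non-negative linear operators is needed.
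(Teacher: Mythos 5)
Your proof is correct and follows essentially the same inductive argument as the paper: expand $K_1^{n+1}(g)=K_1(K_1^n g)$, apply hypothesis (i) (legitimate since (ii) keeps $K_1^n g\in\cB_{+,>}(S)$), then finish with the induction hypothesis and the order-preservation of the non-negative operator $K_{2,h}$. The only difference is that you spell out more explicitly why $K_{2,h}$ is order-preserving and why the induction must be routed through $K_1$ on the outside, but the underlying argument is the same.
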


The next lemma, relating the continuity 
of exponents to continuity in operator norm, 
is useful when studying the continuity of exponents.

\begin{lem}\label{lem:conti}
For every $1\le \ell\le \infty$ let $K_\ell$ be a bounded linear operator on $\cB(S)$. If $\lim_{\ell\rightarrow\infty}\|K_\ell-K_\infty \|=0$, then $\lim_{\ell\to\infty}\lambda(K_\ell)=\lambda(K_\infty)$.

\end{lem}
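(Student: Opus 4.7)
My plan is to prove $\lim_\ell \lambda(K_\ell) = \lambda(K_\infty)$ by establishing the two bounds $\limsup_\ell \lambda(K_\ell) \le \lambda(K_\infty) \le \liminf_\ell \lambda(K_\ell)$ separately, using Gelfand's formula in the form $\lambda(K)=\inf_{n\ge 1}\|K^n\|^{1/n}$ (which holds because $\log\|K^n\|$ is subadditive, so Fekete's lemma applies).

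The common ingredient is norm-continuity of the map $K\mapsto K^n$ at $K=K_\infty$ for each fixed $n\ge 1$. Using the telescoping identity
\begin{equation*}
K_\ell^n - K_\infty^n \;=\; \sum_{j=0}^{n-1} K_\ell^{\,j}\,(K_\ell-K_\infty)\,K_\infty^{\,n-1-j},
\end{equation*}
together with the uniform bound $M:=\sup_\ell \|K_\ell\|<\infty$ (automatic from $\|K_\ell-K_\infty\|\to 0$), I get $\|K_\ell^n-K_\infty^n\|\le n M^{n-1}\|K_\ell-K_\infty\|\to 0$, and hence $\|K_\ell^n\|^{1/n}\to \|K_\infty^n\|^{1/n}$ for each fixed $n$. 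Upper semi-continuity is then immediate: from $\lambda(K_\ell)\le\|K_\ell^n\|^{1/n}$, taking $\limsup$ in $\ell$ for fixed $n$ and then infimum in $n$ gives $\limsup_\ell\lambda(K_\ell)\le\inf_n\|K_\infty^n\|^{1/n}=\lambda(K_\infty)$.

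The main obstacle is the reverse inequality, since $\|K^n\|^{1/n}\ge\lambda(K)$ is the wrong direction for extracting a lower bound on $\lambda(K_\ell)$. I would address this through spectral perturbation: for any $z$ in the resolvent set of $K_\infty$ with $\|K_\ell-K_\infty\|<\|(zI-K_\infty)^{-1}\|^{-1}$, the Neumann series shows that $(zI-K_\ell)^{-1}$ exists, so the spectrum is upper semi-continuous in the sense that $\sigma(K_\ell)$ eventually lies in any prescribed neighborhood of $\sigma(K_\infty)$. To upgrade this to the desired lower bound on $\lambda(K_\ell)$, I would exploit the structural input from Theorem~\ref{thm:exponent}: when $\lambda(K_\infty)>0$ it is an eigenvalue with a continuous eigenfunction $\psi$, and iterating $K_\ell \psi = \lambda(K_\infty)\psi+(K_\ell-K_\infty)\psi$ gives $\|K_\ell^m \psi\|\ge \lambda(K_\infty)^m - m M^{m-1}\|K_\ell-K_\infty\|\,\|\psi\|$. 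A Kato-type spectral projection argument (the projection onto the part of $\sigma(K_\infty)$ inside $\{|z|\ge \lambda(K_\infty)-\varepsilon\}$ persists under small perturbations) then forces $K_\ell$ to have an eigenvalue of modulus $\ge\lambda(K_\infty)-\varepsilon$ for $\ell$ large. This perturbation-of-isolated-eigenvalue step is the hard part of the proof, as for arbitrary bounded operators the spectral radius is only upper semi-continuous in operator norm, and closing the gap requires the structural hypotheses implicit in the applications where the lemma is invoked.
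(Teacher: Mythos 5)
Your upper bound $\limsup_\ell \lambda(K_\ell)\le\lambda(K_\infty)$, via the telescoping estimate $\|K_\ell^n-K_\infty^n\|\le nM^{n-1}\|K_\ell-K_\infty\|$ and the formula $\lambda(K)=\inf_n\|K^n\|^{1/n}$, is correct and in the same spirit as the first half of the paper's argument. The genuine gap is the lower bound. You correctly diagnose the obstacle --- $\lambda(K)$ is an infimum of continuous functions, so this route only delivers upper semi-continuity --- but you then abandon the stated lemma and retreat to a Kato-type spectral-projection argument that imports the compactness/positivity of Theorem~\ref{thm:exponent}. That argument (isolated eigenvalue of finite multiplicity, persistence of the Riesz projection) is not available for an arbitrary bounded operator on $\cB(S)$, which is what the lemma asserts; you concede as much in your final sentence. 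Also, the intermediate bound $\|K_\ell^m\psi\|\ge\lambda(K_\infty)^m-mM^{m-1}\|K_\ell-K_\infty\|\,\|\psi\|$ is of no use on its own: a lower bound on $\|K_\ell^m\|$ does not give a lower bound on $\lambda(K_\ell)=\inf_m\|K_\ell^m\|^{1/m}$.

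The idea you are missing is that the lower bound can be obtained by \emph{symmetry}, with no new estimate at all. The paper establishes a single quantitative inequality of the form
$\lambda(K_\ell)\le\max\bigl\{c(\delta)\,\lambda(K_\infty)^{1-\delta},\;2\|K_\ell-K_\infty\|^{\delta}\bigr\}$
with $c(\delta)\to1$ as $\delta\to0$, by expanding $(K_\infty+(K_\ell-K_\infty))^n$ as a sum over $2^n$ words, splitting according to whether a word contains at least $\lfloor n\delta\rfloor$ factors of the difference, taking $n$-th roots, and letting $n\to\infty$. The crucial feature of this bound --- which your Gelfand-formula bound $\lambda(K_\ell)\le\|K_\ell^n\|^{1/n}$ lacks --- is that its right-hand side depends on $K_\ell$ and $K_\infty$ only through the symmetric quantity $\|K_\ell-K_\infty\|$ and the spectral radius of the \emph{other} operator. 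Since the hypothesis $\|K_\ell-K_\infty\|\to0$ is symmetric in $K_\ell$ and $K_\infty$, one may interchange their roles to get $\lambda(K_\infty)\le\max\bigl\{c(\delta)\,\lambda(K_\ell)^{1-\delta},\;2\|K_\ell-K_\infty\|^{\delta}\bigr\}$; letting $\ell\to\infty$ and then $\delta\to0$ yields $\liminf_\ell\lambda(K_\ell)\ge\lambda(K_\infty)$. This symmetrization is the step your proposal does not find, and it is what lets the paper prove the lemma at its stated level of generality rather than only under the structural hypotheses of Theorem~\ref{thm:exponent}.
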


\section{Results for MA and AR processes}\label{sec:eg}

In this section we consider our two main examples, 
 moving-average processes and auto regressive processes.

\subsection{Moving Average processes}\label{sec:ma}
Let $\{\xi_i\}_{i\ge -q}$ be a sequence of i.i.d.\ random variables from a continuous distribution function $F$. For a coefficient vector ${\bf a}:=(a_1,\ldots,a_q)\in \R^q$ define the moving average
(MA($q$)) process $\{Z_i\}_{i\ge 0}$ by setting
$$
Z_i:=\xi_i+\sum_{j=1}^q a_j\xi_{i-j},\qquad i=0,1,2,\ldots.
$$ 
Define the operator
$K$  mapping $\cC_b(\R^q)$ to itself by
\begin{equation}
  \label{eq-KMA}
  K g(x_1,\ldots,x_q)=\int_{y+\sum_{j=1}^q a_j x_{q+1-j}>0} 
  g(x_2,\cdots,x_q,y) F({\rm d}y).
\end{equation}

\begin{thm}\label{thm:mainfty} 
For all MA($q$) processes with $\P(Z_0\geq 0)<1$, 
there is a $\beta_F({\bf a})\in [0,1)$ so that
%
\begin{equation}
  \label{eq-1pg9}
\lim_{n\rightarrow\infty}  \frac{1}{n}\log \P(\min_{0\le i\le n}Z_i
  \geq 0)=\log \beta_F({\bf a}).
\end{equation}
Further, 
 if $\beta_F({\bf a})>0$ then $\beta_F({\bf a})$ is the largest 
 eigenvalue of the operator  $K$ defined in \eqref{eq-KMA},
 and the corresponding eigenfunction $\psi(\cdot)$ is non-negative and continuous.
\end{thm}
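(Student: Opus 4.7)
The plan is to apply Theorem~\ref{thm:exponent} to the operator $K$ defined in \eqref{eq-KMA}, viewed as the killed transition operator of the ``lookback'' Markov chain $Y_i := (\xi_{i-q},\ldots,\xi_{i-1})$ on $S := \operatorname{supp}(F)^q$, with initial distribution $\mu := F^{\otimes q}$ (the law of $Y_0$). One step of $K$ shifts $Y_i$ to $Y_{i+1} = (\xi_{i-q+1},\ldots,\xi_i)$ and enforces the constraint $\{Z_i > 0\}$, so
\[
\P\!\bigl(\min_{0\le i\le n} Z_i \geq 0\bigr) = \int_S K^{n+1}\one(y)\,\mu(dy),
\]
using continuity of $F$ to identify $\{Z_i\ge 0\}$ with $\{Z_i>0\}$ up to a null set. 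Condition (ii) of Theorem~\ref{thm:exponent} is immediate since $\mu=F^{\otimes q}$ charges every nonempty open subset of $S$; that $K$ maps $\cC_b(S)$ to itself follows from dominated convergence, together with the fact that $F$ assigns no mass to the moving half-space boundary.

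The key step is to verify that $K^{q+1}$ is compact on $\cC_b(S)$. Unfolding,
\[
K^{q+1}\phi(x) = \int_{\R^{q+1}} \phi(y_2,\ldots,y_{q+1}) \prod_{i=0}^{q} \ind_{A_i(x,y)}\, F^{\otimes(q+1)}(dy),
\]
where each $A_i(x,y)$ is the half-space expressing $\{Z_i>0\}$ in the coordinates $(x,y)$; in particular $A_q$ does not depend on $x$ at all, because after $q+1$ steps the lookback coordinates have been completely shifted out. The uniform continuity of the continuous c.d.f.\ $F$ (with some modulus $\omega_F$) bounds the $F^{\otimes(q+1)}$-measure of each symmetric difference $A_i(x,y)\triangle A_i(x',y)$ by $\omega_F(C\|x-x'\|_\infty)$ uniformly in $x,x'$, which yields uniform boundedness and uniform equicontinuity of the family $\{K^{q+1}\phi:\|\phi\|_\infty\le 1\}$ on all of $S$. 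To upgrade this to relative compactness in $\cC_b(S)$, rather than merely uniform convergence on compact subsets, I would truncate the $y$-integration to $[-M,M]^{q+1}$ at a uniform $\epsilon$-cost using tightness of $F$, and observe that on this truncation each $\ind_{A_i(x,y)}$ saturates to $0$ or $1$ as any $x_j\to\pm\infty$; this produces continuous extensions of $K^{q+1}\phi$ to a compactification of $S$, on which a standard Arzel\`a--Ascoli argument delivers total boundedness, hence relative compactness back on $S$.

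With compactness of $K^{q+1}$ in hand, Theorem~\ref{thm:exponent} applied to $K$ yields $\beta_F({\bf a}):=\lambda(K)$, the asymptotic $\P(\min_{0\le i\le n}Z_i\ge 0)=\beta_F({\bf a})^{n+o(n)}$, and, when $\beta_F({\bf a})>0$, the largest eigenvalue of $K$ together with a non-negative continuous eigenfunction $\psi$. The bound $\beta_F({\bf a})<1$ follows from a decoupling argument using the $q$-dependence of $\{Z_i\}$: the subsequence $\{Z_{(q+1)k}\}_{k\ge 0}$ consists of i.i.d.\ copies of $Z_0$, so $\P(\min_{0\le i\le n}Z_i\ge 0)\le \P(Z_0\ge 0)^{\lfloor n/(q+1)\rfloor+1}$, which together with $\P(Z_0\ge 0)<1$ (the relevant non-degenerate regime) gives $\beta_F({\bf a})\le \P(Z_0\ge 0)^{1/(q+1)}<1$. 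The main obstacle is the compactness step above: upgrading from equicontinuity on compact subsets to relative compactness in $\cC_b(S)$ over the non-compact state space $S$, which requires exploiting the saturation-at-infinity behavior of the linear indicator constraints and is the real work beyond the abstract framework of Theorem~\ref{thm:exponent}.
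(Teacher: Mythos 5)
Your plan follows the paper's proof in its broad outline: reduce to Theorem~\ref{thm:exponent} applied to a Markov chain on the recent innovations, establish compactness of the $(q+1)$-st power of the operator via Arzel\`a--Ascoli together with control at infinity, and deduce $\beta_F({\bf a})<1$ from the $q$-dependence of $\{Z_i\}$. The one cosmetic difference is that you apply Theorem~\ref{thm:exponent} directly to the $q$-dimensional operator $K$ on $\cC_b(\operatorname{supp}(F)^q)$ with $\mu=F^{\otimes q}$, whereas the paper applies it to the $(q+1)$-dimensional chain ${\bf X}(i)=(\xi_{i-q},\ldots,\xi_i)$ and operator $P_S$ on the half-space $S\subset \R^{q+1}$, and only afterwards observes that $P_S g$ is independent of the first coordinate, passing to $K$ at the end. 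That is a mild streamlining, not a genuinely different route; the formula $p_n=\int K^{n+1}\one\,\dd\mu$, the full-support check, the $\cC_b$-invariance of $K$ via dominated convergence, and the $\beta_F<1$ bound are all the same as the paper's.

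The gap is in the compactness step, exactly where you flag uncertainty. The claim that ``each $\ind_{A_i(x,y)}$ saturates to $0$ or $1$ as any $x_j\to\pm\infty$'' is false in general. Take $q=2$ with $a_1\neq 0$ but $a_2=0$: the constraint $A_0$ depends on $x_2$ but not on $x_1$, and $A_1$ depends on neither. Letting $x_1\to\pm\infty$ changes none of the indicators, so nothing saturates in that direction. What is true is that $K^{q+1}\phi(x)$ factors through the vector of linear shifts ${\bf s}(x)=(s_0(x),\ldots,s_{q-1}(x))$, with $s_i(x)=\sum_{j>i}a_j x_{q+1+i-j}$, that appear in the constraint thresholds, so compactness must be argued in terms of ${\bf s}$ on the image of this (possibly non-injective) linear map. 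The paper does exactly this by re-parametrizing $P_S^{q+1}g_n$ as a function $H_n({\bf s})$ and splitting into three regimes: some $s_\ell<-L$, where $H_n$ is uniformly small; some $s_\ell>L$ while the rest lie in $[-L,L]$, where $H_n$ is uniformly close to a \emph{reduced} function $H_{n,\ell_0,\ldots,\ell_r}$ that omits the saturated indicator factors but still depends nontrivially on the moderate coordinates; and all $s_\ell\in[-L,L]$, where Arzel\`a--Ascoli applies. These are then combined by passing through successive subsequences. Your truncation of the $y$-integration and the equicontinuity bound give you the ingredients for the third regime and partial control for the first, but the intermediate ``mixed'' regime and the diagonalization across the cases are the pieces missing from your sketch, and they cannot be replaced by a pointwise-saturation statement over $x$.
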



The next theorem establishes the
continuity of the MA($q$) persistence exponent.
\begin{thm}\label{thm:ma_conti}
In the setting of Theorem \ref{thm:mainfty}, the function ${\bf a}\mapsto \beta_F({\bf a})$ is continuous on $\R^q$.
\end{thm}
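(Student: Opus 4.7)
The plan is to invoke Lemma~\ref{lem:conti} for the operator $K^{q+1}$ defined via \eqref{eq-KMA}. Since $K^{q+1}_{\bf a}$ is compact on $\cC_b(\R^q)$ by the proof of Theorem~\ref{thm:mainfty}, and the spectral radius satisfies $\lambda(K^{q+1}_{\bf a}) = \beta_F({\bf a})^{q+1}$, it is enough to prove
$$\lim_{{\bf a}^{(k)} \to {\bf a}} \|K^{q+1}_{{\bf a}^{(k)}} - K^{q+1}_{\bf a}\| = 0;$$
the continuity of $\beta_F$ will then follow by taking $(q+1)$-th roots.

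To bound the operator norm difference, I would write, for any $\phi \in \cC_b(\R^q)$ with $\|\phi\|_\infty \le 1$,
$$\bigl| K^{q+1}_{{\bf a}^{(k)}}\phi(x) - K^{q+1}_{\bf a}\phi(x)\bigr| = \bigl| \E[\phi(\xi_1,\ldots,\xi_q)(\mathbf{1}_{A_{{\bf a}^{(k)}}(x)} - \mathbf{1}_{A_{\bf a}(x)})] \bigr|,$$
where $A_{\bf a}(x) := \{Z_i({\bf a},x,\xi) > 0 : 0 \le i \le q\}$ encodes the $q+1$ survival constraints for the MA process with parameter ${\bf a}$ started from state $x$. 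This yields the estimate
$$\|K^{q+1}_{{\bf a}^{(k)}} - K^{q+1}_{\bf a}\| \le \sup_{x \in \R^q} \P\bigl(A_{\bf a}(x)\, \triangle\, A_{{\bf a}^{(k)}}(x)\bigr) \le \sum_{i=0}^{q} \sup_{x \in \R^q}\P\bigl(|Z_i({\bf a})| \le |Z_i({\bf a}) - Z_i({\bf a}^{(k)})|\bigr).$$
Each $Z_i({\bf a}) - Z_i({\bf a}^{(k)})$ is a linear combination of $x$- and $\xi$-coordinates with coefficients bounded by $\|{\bf a}-{\bf a}^{(k)}\|_\infty$. To control the supremum I would split $\R^q$ into the region $\{|x| \le M\}$, where the width of the critical interval for $\xi_i$ is bounded by $\|{\bf a}-{\bf a}^{(k)}\|(qM + q\max|\xi|)$ and hence, after conditioning on $\xi_{\ne i}$, dominated by the modulus of continuity $w_F$ of the continuous distribution function $F$; and the complement $\{|x|>M\}$, where for large $|x|$ the shifts $S_i({\bf a},x)=\sum_{j=i+1}^q a_j x_{q+i+1-j}$ (for $i<q$) either drive $\P(A_{\bf a}(x))$ itself to $0$ or determine the sign of $Z_i$ robustly against small perturbations, so that a tail estimate on $F$ dominates.

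The main obstacle is the behavior of the supremum in ``exceptional'' directions $x \to \infty$ along which some $S_i({\bf a}, x)$ grows sublinearly (or cancels) in $|x|$ while the perturbation $S_i({\bf a},x) - S_i({\bf a}^{(k)},x)$ can still grow linearly, so that the naive bound $\P(A_{\bf a}) + \P(A_{{\bf a}^{(k)}})$ need not be small. Resolving this requires exploiting the joint structure of the $q+1$ constraints: the linear map $x \mapsto (S_0({\bf a},x),\ldots,S_{q-1}({\bf a},x))$ is upper triangular with diagonal $a_q$, so a case split on whether $a_q=0$ (handled by reduction to an MA($q-1$) process, since the Markov state then carries a redundant coordinate) or $a_q \ne 0$ (the map is an isomorphism, forcing at least one shift to grow with a definite sign) is the key step. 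Combined with the fact that $Z_q, Z_{q+1},\ldots$ do not depend on $x$, this case analysis delivers the uniform operator-norm convergence needed to apply Lemma~\ref{lem:conti}.
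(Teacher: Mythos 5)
Your central step---showing $\|K^{q+1}_{{\bf a}^{(k)}}-K^{q+1}_{\bf a}\|\to 0$ on $\cC_b(\R^q)$---is false in general, and your proposed case split on $a_q$ does not rescue it, because when $a_q=0$ the perturbation ${\bf a}^{(k)}$ can still have $a^{(k)}_q\neq 0$. Concretely take $q=1$, ${\bf a}=0$, ${\bf a}^{(k)}=\epsilon_k\to 0^+$. Then $K^2_0\phi$ is the constant $\P(\xi_1>0)\,\E[\phi(\xi_2)\one_{\{\xi_2>0\}}]$, while
$$K^2_{\epsilon_k}\phi\bigl(t/\epsilon_k\bigr)=\E\bigl[\phi(\xi_2)\one_{\{\xi_1>-t,\,\xi_2+\epsilon_k\xi_1>0\}}\bigr]\;\longrightarrow\;\P(\xi_1>-t)\,\E[\phi(\xi_2)\one_{\{\xi_2>0\}}]$$
as $k\to\infty$, for every fixed $t$. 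Taking $t\ne 0$ shows that $\sup_x|K^2_{\epsilon_k}\phi(x)-K^2_0\phi(x)|$ is bounded below uniformly in $k$, so the operator-norm difference does not vanish. The same exceptional directions ($x\sim 1/\|{\bf a}-{\bf a}^{(k)}\|$) defeat any fixed power $K^m$, not just $K^{q+1}$, so no amount of iterating the kernel closes the gap. The obstacle you flagged is real, but the ``reduction to MA$(q-1)$'' does not apply because the reduction would have to be performed simultaneously for both the limit parameter and the perturbed one, and the perturbed process is genuinely of order $q$.

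The paper's proof avoids operator-norm convergence on the full non-compact state space altogether. The upper semicontinuity $\limsup_k\beta_F({\bf a}^{(k)})\le\beta_F({\bf a})$ is obtained from $q$-dependence alone: blocking $\{0,\dots,n\}$ into windows of length $m$ separated by gaps of length $q$ gives $\P_{{\bf a}^{(k)}}(Z_i>0,\,0\le i\le n)\le\P_{{\bf a}^{(k)}}(Z_i>0,\,1\le i\le m)^{M_n}$, and one lets $k\to\infty$ using only convergence of finite-dimensional laws, then $m\to\infty$. The lower semicontinuity is where Lemma~\ref{lem:conti} enters---but applied to the operators $P_{S_M(\cdot)}$ restricted to the \emph{compact} box $[-M,M]^{q+1}$, where the norm estimate you had in mind does hold. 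The remaining error from truncation, $\limsup_{M\to\infty}\lambda(P_{S_M({\bf a})})\ge\lambda(P_{S({\bf a})})$, is closed by a separate combinatorial/large-deviations bound on the number of indices $i$ with ${\bf X}(i)\notin[-M,M]^{q+1}$. So the two halves of continuity are established by two different mechanisms, neither of which is what you proposed.
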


Theorem \ref{thm:mainfty} shows that $\beta_F({\bf a})\in [0,1)$. 
  As noted in \cite{krishnapur,majumdardhar2001},
  for the particular case $q=1$ and $a_1=-1$ and any innovation distribution with a continuous density, we have  
\begin{equation} \label{eqn:krishnapur}
  \P(\min_{0\le i\le n}Z_i\ge 0)=\P(\xi_{-1}<\xi_0<\ldots<\xi_n)=\frac{1}{(n+2)!},
\end{equation}
and so $\beta_F(-1)=0$. The next proposition gives a necessary and sufficient condition for $\beta_F({\bf a})>0$. 
\begin{ppn}\label{thm:mafinite}
  Suppose that $\{Z_i\}_{i\geq 0}$ is a
   MA($q$) process such that $\P(\xi_1>0)>0,\P(\xi_1<0)>0$.
Then $\beta_F({\bf a})>0$ if and only if $\sum_{j=1}^q a_j\ne -1$.
\end{ppn}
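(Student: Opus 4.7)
The plan is to prove the two implications separately: a polynomial factorization handles $\sum a_j = -1 \Rightarrow \beta_F(\mathbf{a}) = 0$, and a constant-window construction handles the converse.

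\emph{Necessity.} The MA polynomial $P(z) := 1 + \sum_{j=1}^{q} a_j z^j$ satisfies $P(1) = 0$, so $(z-1) \mid P(z)$: write $P(z) = (z-1) Q(z)$ with $\deg Q \le q-1$ and $Q \not\equiv 0$ (since $P(0) = 1$). With $L$ the backshift operator and $\eta_i := Q(L) \xi_i$, we get $Z_i = P(L) \xi_i = (L-1) Q(L) \xi_i = \eta_{i-1} - \eta_i$, hence
\[
  \{\min_{0 \le i \le n} Z_i \ge 0\} = \{\eta_{-1} \ge \eta_0 \ge \cdots \ge \eta_n\}.
\]
Each $\eta_i$ is a non-trivial linear combination of $\xi_{i-q+1}, \ldots, \xi_i$, so the evenly-spaced sub-collection $\eta_{-1}, \eta_{-1+q}, \eta_{-1+2q}, \ldots$, of length $m := \lfloor (n+1)/q \rfloor + 1$, uses pairwise disjoint innovations and is i.i.d.\ with continuous common marginal (a non-trivial linear combination of i.i.d.\ continuous variables has no atoms). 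By exchangeability, the probability that this sub-collection is non-increasing equals $1/m!$, so $\P(\min_{0 \le i \le n} Z_i \ge 0) \le 1/m!$; the right side decays super-exponentially in $n$, forcing $\beta_F(\mathbf{a}) = 0$.

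\emph{Sufficiency.} Let $c := 1 + \sum_j a_j$, $A_+ := \sum_{j: a_j > 0} a_j$, $A_- := -\sum_{j: a_j < 0} a_j$, so $c = 1 + A_+ - A_-$. Suppose first $c > 0$. Then $(1 + A_+)/A_- > 1$ (with the convention $1/0 = +\infty$); fix $\rho \in (1, (1+A_+)/A_-)$. Since $\P_F(\xi_1 > 0) > 0$ there exists $M > 0$ with $\P_F(\xi_1 > M) > 0$, and the geometric partition of $(M, \infty)$ into the intervals $(M \rho^k, M \rho^{k+1}]$, $k \ge 0$, must contain at least one $[\alpha, \beta]$ with $p := \P_F(\xi_1 \in [\alpha, \beta]) > 0$ and $\beta/\alpha = \rho$. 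A corner-point check gives
\[
  \inf\bigl\{Z_i : \xi_{-q}, \ldots, \xi_n \in [\alpha, \beta]\bigr\} = \alpha(1 + A_+) - \beta A_- > 0,
\]
and independence of the innovations yields $\P(\min_{0 \le i \le n} Z_i \ge 0) \ge p^{\,n+q+1}$, so that $\beta_F(\mathbf{a}) \ge p > 0$. The case $c < 0$ is symmetric, using $\P_F(\xi_1 < 0) > 0$ and an analogous geometric window in $(-\infty, 0)$ of ratio less than $A_-/(1+A_+) > 1$ (note $c < 0$ forces $A_- > 0$).

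The main subtlety to verify carefully will be the i.i.d.\ and continuity claim for $\{\eta_{-1+kq}\}_k$ in the necessity step; this rests on $Q \not\equiv 0$ (immediate from $P(0) = 1$) and on the convolution fact that a non-trivial linear combination of i.i.d.\ continuous random variables has no atoms.
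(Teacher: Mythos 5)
Your proof is correct and takes essentially the same route as the paper's: for necessity you telescope $Z_i=\eta_{i-1}-\eta_i$, pass to the $q$-spaced i.i.d.\ subsequence, and use continuity of the marginal to get factorial decay (the paper defines the auxiliary MA($q-1$) process $\tilde Z_i=-\eta_i$ by explicit coefficients $b_i=-\sum_{j>i}a_j$ rather than factoring $P(z)=(z-1)Q(z)$, but this is the same object); for sufficiency you confine all innovations to a window $[\alpha,\beta]$ whose endpoint ratio is controlled by the sign of $1+\sum_j a_j$, which is the paper's $(x,x+\delta)$ construction in multiplicative rather than additive parametrization. Both directions check out.
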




\subsection{Auto-regressive processes}\label{sec:auto}
Let $\{\xi_i\}_{i\ge p}$ be a sequence of i.i.d.\ random variables 
of law $F$ possessing a density function $\phi(\cdot)$ with respect to the Lebesgue measure. Let ${\bf a}:=(a_1,\ldots, a_p)\in \R^p$ be a vector, called coefficient vector.
Given ${\bf Z}_0:=(Z_0,\ldots, Z_{p-1})\in \R^p$ independent of the sequence $\{\xi_i\}$, we define an AR($p$) process $\{Z_i\}_{i\ge p}$ by setting
$$
Z_{i}:=\sum_{j=1}^p a_j Z_{i-j}+\xi_i,\qquad i\ge p.
$$
The law of 
${\bf Z}_0$ is denoted by $\mu$ and called the \textit{initial distribution}. 
Furthermore, let $K:\cB([0,\infty)^p)\to \cB([0,\infty)^p)$ be defined by
  \begin{equation}
    \label{def-KAR}
    K\psi(x_1,\cdots,x_p):=\int_{y+\sum_{j=1}^pa_j x_{p+1-j}>0}\psi(x_2,\cdots,x_p,y+\sum_{j=1}^pa_jx_{p+1-j})\phi(y){\rm d}y. 
  \end{equation}
 Under the above assumptions, $K$ maps
$\cC_b\Big([0,\infty)^p\Big)$ to itself.

\fa{The behavior of the persistence probabilities of AR
processes is surprisingly rich. The recent work \cite{demboar} studies AR processes with a Gaussian innovation density, uncovering a rich structure of varying rates of persistence decay from exponential to stretched exponential, polynomial and constant. 
On the contrary, our approach is mostly equipped to handle the case when the persistence decay is exponential, so we concentrate on a part of this spectrum. One advantage of our approach is that it also gives the existence of a persistence exponent, which, to the best of our understanding, does not follow from \cite{demboar}.}

In this paper we treat two particular sub parameter regimes while studying persistence exponents for the AR($p$) process, namely
${\bf a} \leq {\bf 0}$  and \{${\bf a}\in \R^p: \{\sum_{i=1}^p |a_i| < 1\}$. The behavior of the operator is somewhat different in these two regimes, so we have to treat them separately.
\begin{thm}\label{thm:ar} 
Fix $p\in \N$,
  ${\bf a}\le {\bf 0}$, an innovation density $\phi(\cdot)$,
  initial
distribution $\mu$ satisfying $\mu(U)>0$ for all open $U\in \R_+^p$, and
let $\{Z_i\}_{i\geq 0}$ be the associated AR($p$) process.

\begin{enumerate}
\item[(a)]
  There is a $\theta_F({\bf a})\in[0,1]$, independent of $\mu$,
  such that 
  $$\lim_{n\rightarrow\infty}\frac{1}{n}\log \P_\mu(\min_{0\le i\le n}Z_i\geq 
  0)=\log \theta_F({\bf a}).$$ Further, if $\theta_F({\bf a})>0$, then $\theta_F({\bf a})$ is the largest eigenvalue 
 of the operator $K$ from \eqref{def-KAR}, viewed as an operator mapping
 $\cC_b\Big([0,\infty)^p\Big)$ to itself.
   The corresponding eigenvector $\psi$ is non-negative and continuous. 

\item[(b)]
If $\P_F(\xi_1>0)>0$ then $\theta_F({\bf a})>0$, and if $\P_F(\xi_1>0)<1$ then $\theta_F({\bf a})<1$.

\item[(c)]
If ${\bf a}^{(k)}$ is a sequence of vectors in $(-\infty,0]^p$ converging to ${\bf a}$ and $a_p<0$, then  $\lim_{k\rightarrow\infty}\theta_F({\bf a}^{(k)})=\theta_F({\bf a})$.

\end{enumerate}
\end{thm}

 As the next proposition shows, the persistence exponent may not exist for 
 some initial distributions, if the coefficient vector 
 ${\bf a}$  in Theorem~\ref{thm:ar} is allowed to have positive entries.

\begin{ppn}\label{ppn:ar_not_strict}
  { Suppose $\{Z_i\}$ is an   AR($1$) process with 
  innovation distribution $F=\NN(0,1)$, and $a_1\in [0,1)$.}

\begin{enumerate}
\item[(a)]
 {If the initial distribution is $\NN:=\NN(0,1/(1-a_1^2))$, then the exponent $\theta_F(a_1,\NN)$ defined by 
 $$\log \theta_F(a_1,\NN):=\lim_{n\rightarrow\infty}\frac{1}{n}\log \P(\min_{0\le i\le n}Z_i\ge 0)$$
 exists, belongs to $(0,1)$, 
 and is continuous as a function of $a_1$.}

\item[(b)]
 If the initial distribution $\mu$ satisfies
\begin{align}
\limsup_{M\rightarrow\infty}\frac{1}{\log M}\log \P_\mu(Z_0>M)=&0\label{eq:condition_1},\\
\liminf_{M\rightarrow\infty}\frac{1}{\log M}\log \P_\mu(Z_0>M)=-&\infty\label{eq:condition_2},
\end{align}
then there exist sequences $\{m_k\}_{k\ge 1}$ and $\{n_k\}_{k\ge 1}$ such that
\begin{align}
\lim_{k\rightarrow\infty}\frac{1}{m_k}\log \P(\min_{0\le i\le m_k}Z_i\ge 0)= &0\label{eq:no_limit_1},\\
\lim_{k\rightarrow\infty}\frac{1}{n_k}\log \P(\min_{0\le i\le n_k}Z_i\ge 0)= &\log \theta_{F}(a_1,\NN),\label{eq:no_limit_2}
\end{align}
where $\theta_{F}(a_1,\NN)$ is as in part (a). In particular, the limit 
$$\lim_{n\rightarrow\infty}\frac{1}{n}\log \P(\min_{0\le i\le n}Z_i\ge 0)$$ 
does not exist for any $a_1\in (0,1)$.

\item[(c)]
If $\mu=
\delta_{x_0}$  for some $x_0>0$, then the exponent $\theta_F(a_1,\delta_{x_0})$ defined by $$\log\theta_F(a,\delta_{x_0}):=\lim_{n\rightarrow\infty}\frac{1}{n}\log \P(\min_{0\le i\le n}Z_i\ge 0)$$ exists, and equals $\theta_F(a_1,\NN)$ of part (a). 
\end{enumerate}
\end{ppn}

It follows that the AR($1$) operator $K=P_S$ with $a_1\in (0,1)$ and $F=\NN(0,1)$  is no longer compact on $\cC_b\Big([0,\infty)\Big)$, as otherwise Theorem~\ref{thm:exponent} would be applicable with $K=P_S$, giving the existence of an exponent in part (b). On the other hand,
there does exist a universal exponent for initial distributions consisting of a single atom.
This motivates our focus in this paper on
studying nonatomic initial distributions. \fa{It may be possible to weaken the assumption that the initial distribution puts mass on all open sets. In particular, one may be able to adapt the technique used in the recent work \cite{CV}, and work with a weighted sup norm as opposed to the usual sup norm considered in this paper. It is however not clear whether the Harnack condition used in \cite{CV} holds for AR processes, particularly without extra assumptions on the innovation density.}

In order to derive an existence result for
situations where the operator $P_S$ is not compact, 
one needs to make a judicious choice of the operator 
$K$ in Theorem \ref{thm:exponent}. This requires additional assumptions on
the initial measure  and innovation. We focus below on the contractive case
$\sum_{j=1}^p|a_j|<1$.


\begin{thm}\label{thm:ar+}
  Fix $p\in \N$, parameters ${\bf a}$ satisfying $\sum_{j=1}^p|a_j|<1$,
 an innovation density $\phi(\cdot)$, an initial
distribution $\mu$ satisfying $\mu(U)>0$ for all open $U\in \R_+^p$, and
let $\{Z_i\}_{i\geq 0}$ be the associated AR($p$) process.
Further assume that there exists $\delta>0$ such that 
\begin{equation}
  \label{eq-expmomass}
  \E_\mu\Big[\exp\Big\{\delta\sum_{j=0}^{p-1}Z_j\Big\}1_{\{\min_{0\le i\le p-1}Z_i
  \geq 0\}}\Big]<\infty
\end{equation}
and 
\begin{align}\label{eq:+}
\limsup_{|t|\rightarrow\infty}\frac{1}{|t|}\log\phi(t)<0.
\end{align}

\begin{enumerate}
\item[(a)]
 There is a $\theta_F({\bf a})\in[0,1]$, independent
 of $\mu$, such that $$\lim_{n\rightarrow\infty}\frac{1}{n}\log \P_\mu(\min_{0\le i\le n}Z_i \geq 0)=\log \theta_F({\bf a}).$$ Further, if $\theta_F({\bf a})>0$, then $\theta_F({\bf a})$ is an eigenvalue of the operator $K$ on
 $\cC_b\Big([0,\infty)^p\Big)$ defined by \eqref{def-KAR}.
 The corresponding eigenfunction $\psi$ is non-negative and continuous. 

\item[(b)]
If $\P_F(\xi_1>0)>0$ then $\theta_F({\bf a})>0$, and if $\P_F(\xi_1>0)<1$
then $\theta_F({\bf a})<1$. 

\item[(c)]
The function ${\bf a}\mapsto \theta_F({\bf a})$ is continuous on the set $\sum_{j=1}^p|a_j|<1$. 

\end{enumerate}

\end{thm}

As mentioned before, the proof of
Theorem \ref{thm:ar+} employs 
a modified version of the operator $P_S$, which now turns out 
to be compact if $\sum_{j=1}^p|a_j|<1$. 
The motivation behind the modification of the operator borrows from 
\cite{aurzadabaumgartenalea,baumgartendissertation,majumdar_bray_ehrhardt_2001}, where a similar strategy was used to deal with  AR($1$) processes with {\em Gaussian} innovations starting at stationarity. An equivalent proof of Theorem \ref{thm:ar+} might be obtained by replacing the sup norm topology on $\cC_b([0,\infty)^p)$ by 
a weighted sup norm with geometrically growing weights, 
which ensures that $P_S$ 
is compact with respect to this new topology.

 \fa{We also note that the assumption of existence of a finite exponential moment for the initial distribution is only needed for the upper bound, in the sense that even if \eqref{eq-expmomass} is removed, one can still show that
$$\liminf_{n\rightarrow\infty}\frac{1}{n}\log \P(\min_{0\le i\le n}Z_i>0)\ge \log \theta_F({\bf a}),$$
where $\theta_F({\bf a})$ is the universal exponent from Theorem \ref{thm:ar+}. Only the upper bound may fail here without \eqref{eq-expmomass}, as shown in Proposition \ref{ppn:ar_not_strict}.}


\subsubsection{Strict monotonicity of the exponent}

If we restrict ${\bf a}$ to the non negative orthant $[0,\infty)^p$, a simple coupling argument shows that for any initial distribution $\mu$ the function ${\bf a}\mapsto \P^{\bf a}(\min_{0\le i\le n}Z_i\ge 0)$ is monotonically non-decreasing in ${\bf a}$. This implies that the exponent $\theta_F({\bf a},\mu)$ with initial distribution $\mu$ is non-decreasing in ${\bf a}\in [0,\infty)^p$, provided it exists.
 Note however that if in Proposition \ref{ppn:ar_not_strict}, the limit in \eqref{eq:condition_2} equals $0$, then the same proof shows that the corresponding exponent $\theta_{F}(a_1,\mu)$ equals $0$ for all $a_1\in (0,1)$, and consequently the function $a_1\mapsto \theta_F(a_1,\mu)$ is not strictly monotone.

 Our next theorem shows that if $F$ has a log concave density on
 $\R$ and the initial distribution $\mu$ has finite exponential moment, then the exponent $\theta_F({\bf a},\mu)$ is free of $\mu$, and the map ${\bf a}\mapsto \theta_F({\bf a})$  is \textit{strictly}
 increasing on the set $\{{\bf a}\ge {\bf 0}:\sum_{j=1}^p a_j<1\}$. 
 The exponential decay of log concave densities ensures that \eqref{eq:+} holds, and so Theorem \ref{thm:ar+} guarantees the existence of a non-trivial exponent which is  free of the initial distribution $\mu$.
\begin{thm}\label{thm:strict_ar}
Assume that $\phi$
is a strictly positive log concave density  over $\R$, that
${\bf a}\ge {\bf 0}, \sum_{j=1}^p a_j<1$, and that
$\mu$ satisfies \eqref{eq-expmomass}.
Then ${\bf b}\ge {\bf a}$ with ${\bf b}\ne {\bf a}$ implies $\theta_F({\bf b})> \theta_F({\bf a})$.
\end{thm}

We complete the picture on the positive orthant through the next proposition, 
which states that the persistence exponent $\theta_F({\bf a})=1$ for all ${\bf a}\ge {\bf 0}$ such that $\sum_{j=1}^p a_j>1$, for any innovation distribution $F$. 

\begin{ppn}\label{thm:non_strict}
  Assume that  ${\bf a}\ge 0$ and $\sum_{j=1}^p a_j> 1$.  If the initial distribution satisfies $\mu((0,\infty)^p)>0$, and  the innovation density satisfies $\P(\xi_1>0)>0$,  then  
  $\theta_F({\bf a},\mu)=
  \theta_F({\bf a})=1$.
\end{ppn}

Proposition \ref{thm:non_strict}, together with Theorems \ref{thm:ar+} and \ref{thm:strict_ar}, gives an almost complete picture in terms of monotonicity on the positive orthant.
The function ${\bf a}\mapsto \theta_F({\bf a},\mu)$ is continuous and non-decreasing on $\{{\bf a}:\sum_{j=1}^p a_j<1\}$, and identically equal to $1$ on the set $\{{\bf a}:\sum_{j=1}^p a_j> 1\}$. If further the innovation density is log concave and the initial distribution has finite exponential moment, then the exponent is {\it strictly} increasing on $\{{\bf a}:\sum_{j=1}^p a_j<1\}$. In the critical case, the exponent is usually one, as shown in some specific examples in \cite{baumgartenar,demboar}.

\subsubsection{Positivity of the exponent}
Part (b) of Theorem~\ref{thm:ar} and Theorem \ref{thm:ar+} 
give conditions ensuring that
the exponent is non-trivial, i.e. the persistence probability decays at an exponential rate. The next proposition generalizes this to show that no matter what the coefficient vector ${\bf a}$ may be, the exponent can never be $0$, i.e. the persistence probability can never decay at a super exponential rate. 

\begin{ppn}\label{ppn:finite}
  Fix $p\in \N$, parameters ${\bf a}$, an innovation distribution such that $0$ is an interior point of its support,
  and $\mu$ satisfying 
  $\mu(( 0,\delta)^p) >0$ for every $\delta>0$. 
Let $\{Z_i\}_{i\geq 0}$ be the associated AR($p$) process.
Then,
$$\liminf_{n\rightarrow\infty}\frac{1}{n}\log\P(\min_{0\le i\le n}Z_i
\geq 0)>-\infty.$$
In particular, 
if $\theta_F({\bf a},\mu)$ exists then it must be positive.

\end{ppn}

\begin{remark}
 One cannot dispense completely of the assumptions in Proposition \ref{ppn:finite}. Indeed, concerning the condition on initial distribution,
 when $p=1$, $a_1=-\frac{1}{2}$, $\mu(\left( 2,4) \right))=1$
 and $\P_F(\left( 0,1 \right))=1$, one sees that $Z_0\ge 2$ forces $Z_1=-\frac{1}{2}Z_0+\xi_1<0$, and so $\theta_F(a_1,\mu)=0$. 
 On the other hand, concerning the condition on the innovation distribution,
 if $p=1$, $a_1=1$, $\P_F(\left( -1,-2 \right))=1$ 
 and $\mu(\left(x,\infty  \right))=e^{-x^2}$ for all $x>0$, one obtains that
 $\P(\min_{0\le i\le n}Z_i\geq 0)\le \P(Z_0
 \geq n)=e^{-n^2}$, and so again
 $\theta_F(a_1,\mu)=0$. 
 
 \end{remark}

\section{Exponents for concrete cases}\label{sec:comp}
Using our operator approach, we can compute the persistence exponent in a number of concrete examples. 

\subsection{MA($1$) processes}\label{sec:ma-concrete}

We first consider MA(1) processes, starting with uniform innovation density.

\begin{ppn} \label{ppn:example_ma_uniform}
  Let $\{Z_i\}_{i\ge 0}$ be a MA(1) process with $a_1=1$
and innovation density $\phi=$ $(a+b)^{-1} 
{\bf 1}_{(-a,b)}$, where $a,b>0$.
\begin{itemize}
\item
  If $a\ge b$ then
$$
\P(\min_{0\le i\le n}Z_i\geq 0)  = \Big(\frac{4b}{\pi(a+b)}\Big)^{n+o(n)}.
$$
\item
  If $a<b$ then
$$
\P(\min_{0\le i\le n}Z_i \geq 0) = \lambda^{n+o(n)}.
$$
where $\lambda$
is the largest real solution to the equation
  \begin{equation} \label{eqn:uniformmasoln}
  \tan \left( \frac{a}{(a+b)\lambda} \right) = \frac{1- (1-2a/(a+b)) /\lambda}{1+(1-2a/(a+b)) /\lambda}.
  \end{equation}  
 \end{itemize}
\end{ppn}



For $a=b$ in Proposition \ref{ppn:example_ma_uniform}, one obtains $\beta_F(1)=2/\pi$. The next theorem shows that for continuous 
{\it symmetric innovation distributions} this value is {\it universal}. 

\begin{thm} \label{thm:example_ma_general_at_rho1}
 Let $\{Z_i\}_{i\ge 0}$ be a MA(1) process with $a_1=1$
and symmetric innovation density. Then
$$
  \P(\min_{1\le i\le n}Z_i\geq 0) = \!\P(\min_{0\le i\le n}\xi_{i}+\xi_{i-1}\geq 0)=\!
\sum_{k\in\Z} \frac{2}{(\pi/2+2\pi k)^{n+2}}=\!\left(\frac{2}{\pi}\right)^{n+o(n)}\!\!\!.
$$
\end{thm}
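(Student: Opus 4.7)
The plan is to reduce the problem to a universal spectral computation on $L^{2}([-1,1])$ via the change of variable $v=2F(x)-1$, and then extract the exact formula through a Parseval-type expansion.

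First I would set up the transfer operator. As in Section~\ref{sec:ma},
$$\P(\min_{1\le i\le n}Z_i\ge 0)=\int_{\R}K^{n}\mathbf{1}(x)\,F(dx),\qquad K\psi(x)=\int_{-x}^{\infty}\psi(y)\phi(y)\,dy.$$
Setting $G(x):=2F(x)-1$, symmetry of $\phi$ makes $G$ an odd increasing bijection $\R\to(-1,1)$. Substituting $v=G(x), u=G(y)$ and using $du=2\phi(y)\,dy$ together with $G(-x)=-G(x)$, a direct computation pulls $K$ back to the $F$-free operator
$$\tilde K f(v)=\tfrac{1}{2}\int_{-v}^{1}f(u)\,du,\qquad v\in[-1,1],$$
on $L^{2}([-1,1],dv/2)$, and the probability becomes the inner product $\langle\mathbf{1},\tilde K^{n}\mathbf{1}\rangle$ in that space.

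Second I would verify that $\tilde K$ is Hilbert--Schmidt and self-adjoint: its integral kernel with respect to $dv/2$ is $\mathbf{1}_{\{u+v\ge 0\}}$, which is $(u,v)$-symmetric and square-integrable on $[-1,1]^{2}$. Injectivity is immediate (differentiating $\int_{-v}^{1}f=0$ gives $f\equiv 0$), so by the spectral theorem the eigenfunctions of $\tilde K$ form an ONB. To locate the spectrum I would turn the eigenvalue equation into a BVP: at $v=-1$ it forces $\Psi(-1)=0$; one differentiation yields $\lambda\Psi'(v)=\tfrac12\Psi(-v)$, whence $\Psi'(1)=0$; a second differentiation produces the ODE $\Psi''+\Psi/(4\lambda^{2})=0$. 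Solving subject to the two boundary conditions pins $\alpha_{k}:=1/(2|\lambda_{k}|)=(2k+1)\pi/4$ and $\Psi_{k}(v)=\sin(\alpha_{k}(v+1))$ for $k\ge 0$; substituting back into the integral equation and using $\cos\alpha_{k}=(-1)^{k}\sin\alpha_{k}$ yields $\tilde K\Psi_{k}=((-1)^{k}/(2\alpha_{k}))\Psi_{k}$, i.e.\ $\lambda_{k}=(-1)^{k}\cdot 2/((2k+1)\pi)$.

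Finally, elementary integration gives $\langle\mathbf{1},\Psi_{k}\rangle=1/(2\alpha_{k})$ and $\|\Psi_{k}\|^{2}=1/2$, so Parseval yields
$$\langle\mathbf{1},\tilde K^{n}\mathbf{1}\rangle=\sum_{k\ge 0}\frac{|\langle\mathbf{1},\Psi_{k}\rangle|^{2}}{\|\Psi_{k}\|^{2}}\lambda_{k}^{n}=\sum_{k\ge 0}\frac{2(-1)^{kn}}{((2k+1)\pi/2)^{n+2}}.$$
The stated doubly-infinite form follows from the bijection $k\leftrightarrow j\in\Z$ sending $(2k+1)\pi/2$ to $+(\pi/2+2\pi j)$ when $k=2j,\,j\ge 0$, and to $-(\pi/2+2\pi j)$ when $k=-2j-1,\,j<0$; the sign $(-1)^{n+2}=(-1)^{n}$ from the negative branch combines with the parity of $k$ to reproduce exactly the factor $(-1)^{kn}$, converting the spectral sum into $\sum_{j\in\Z}2/(\pi/2+2\pi j)^{n+2}$. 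The $k=0$ term dominates and gives the asymptotic $(2/\pi)^{n+o(n)}$, consistent with $\beta_{F}(1)=2/\pi$ from Theorem~\ref{thm:mainfty}. The main obstacle I anticipate is the sign bookkeeping in locating the spectrum (the ODE is blind to the sign of $\lambda$, so extracting $\lambda_{k}=(-1)^{k}/(2\alpha_{k})$ requires substituting back into the integral equation) and in then matching the one-sided spectral sum to the doubly-indexed stated form; a secondary but essential technical point is the clean justification that $\tilde K$ is self-adjoint on the correct Hilbert space so that Parseval applies.
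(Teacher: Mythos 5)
Your proposal is correct and follows essentially the same strategy the paper sketches: diagonalize the MA(1) transfer operator $K$ and expand $\P(\min_{1\le i\le n}Z_i\ge 0)=\E[K^n\mathbf{1}(\xi_0)]$ in the resulting orthonormal eigenbasis, then re-sum. The only (purely presentational) difference is your pushforward through $v=2F(x)-1$ to the $F$-free operator $f\mapsto\tfrac12\int_{-v}^1 f\,du$ on $L^2([-1,1],dv/2)$, which is unitarily equivalent to the paper's $K$ acting on $L^2(\R,\phi)$ with eigenbasis $g_k=\sin(\Phi/\lambda_k)+\cos(\Phi/\lambda_k)$, and makes the $F$-independence, the compactness/self-adjointness, and the index bookkeeping explicit where the paper leaves them as ``a lengthy but straightforward computation.''
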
 
\noindent
Theorem \ref{thm:example_ma_general_at_rho1} first appears in
\cite{majumdardhar2001}, where the proof technique is  different.

Proposition~\ref{ppn:Rademacher} below shows that
the universality in Theorem \ref{thm:example_ma_general_at_rho1} does not extend to discrete distributions.
In fact, for discrete innovation distributions $F$,
there can be 
\emph{non-trivial} differences between the two quantities
$$
\P(\min_{0\le i\le n}Z_i>0 ) \qquad\text{and}\qquad \P(\min_{0\le i\le n}Z_i\ge 0).
$$
\fa{Indeed, we have the following proposition, whose proof is elementary and left to the reader.}

\begin{ppn}\label{ppn:Rademacher}
   Let $\{Z_i\}_{i\ge 0}$ denote an MA(1) process with $a_1=1$
and Rademacher innovations, i.e. 
$\xi_i$ equal
$\pm 1$ with probability $1/2$. Then 
$$
\P(\min_{0\le i\le n}Z_i>0 ) = (1/2)^{n+2},
$$
while
$$
\P( \min_{0\le i\le n}Z_i \ge 0)  = \left(\frac{1}{2}+\frac{1}{\sqrt{5}}\right) \left(\frac{1+\sqrt{5}}{4}\right)^{n+1} + \left(\frac{1}{2}-\frac{1}{\sqrt{5}}\right) \left(\frac{1-\sqrt{5}}{4}\right)^{n+1}.
$$
\end{ppn}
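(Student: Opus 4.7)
The key observation is that with Rademacher innovations, $Z_i=\xi_i+\xi_{i-1}$ takes values in $\{-2,0,2\}$, so both events of interest admit a purely combinatorial description in terms of the underlying sequence $\xi_{-1},\xi_0,\ldots,\xi_n$.

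For the strict inequality, the plan is one line: $Z_i>0$ forces $Z_i=2$, which is equivalent to $\xi_{i-1}=\xi_i=+1$. Requiring this for every $0\le i\le n$ forces $\xi_{-1}=\xi_0=\cdots=\xi_n=+1$, so the probability is $(1/2)^{n+2}$.

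For the non-strict inequality, $Z_i\ge 0$ is equivalent to $(\xi_{i-1},\xi_i)\ne(-1,-1)$. Hence $\{\min_{0\le i\le n}Z_i\ge 0\}$ is precisely the event that no two consecutive $-1$'s occur in the length $N:=n+2$ string $\xi_{-1},\xi_0,\ldots,\xi_n$. Writing $f_N$ for the number of such strings in $\{-1,+1\}^N$ and conditioning on the first symbol (a $+1$ leaves an arbitrary valid tail of length $N-1$; a $-1$ forces the next symbol to be $+1$, followed by an arbitrary valid tail of length $N-2$), I get the classical Fibonacci recursion $f_N=f_{N-1}+f_{N-2}$ with $f_1=2,f_2=3$, i.e.\ $f_N=F_{N+2}$ in terms of the Fibonacci sequence $F_1=F_2=1$. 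Therefore $\P(\min_{0\le i\le n}Z_i\ge 0)=F_{n+4}/2^{n+2}$.

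The closed form then follows by routine algebra from Binet's formula $F_m=(\phi^m-\hat\phi^m)/\sqrt{5}$ with $\phi=(1+\sqrt{5})/2$, $\hat\phi=(1-\sqrt{5})/2$. Factoring out $((1\pm\sqrt{5})/4)^{n+1}=(\phi/2)^{n+1}$, resp.\ $(\hat\phi/2)^{n+1}$, from $\phi^{n+4}/2^{n+2}$ and $\hat\phi^{n+4}/2^{n+2}$, one is left with the constants $\phi^3/2$ and $\hat\phi^3/2$; using $\phi^3=(3+\sqrt{5})(1+\sqrt{5})/4=2+\sqrt{5}$ and its conjugate $\hat\phi^3=2-\sqrt{5}$, division by $\sqrt{5}$ yields exactly the coefficients $1/2+1/\sqrt{5}$ and $1/2-1/\sqrt{5}$ appearing in the claim. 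There is no real obstacle in the proof; the only content is translating each event into a pattern-avoidance problem on the $\xi$-sequence, everything else being a classical Fibonacci calculation.
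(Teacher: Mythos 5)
Your proof is correct, and it is genuinely different from the paper's. You translate both events into pattern-avoidance conditions on the underlying i.i.d.\ sign sequence $\xi_{-1},\dots,\xi_n$: the strict event forces the all-$(+1)$ string, and the non-strict event is exactly avoidance of two consecutive $-1$'s, so the count is Fibonacci and the closed form drops out of Binet's formula. The paper instead stays inside its operator framework: since $g$ need only be specified at the two support points $\pm 1$, the eigenvalue equation $\lambda g(x)=\tfrac12 g(1)\ind_{\{x\ge -1\}}+\tfrac12 g(-1)\ind_{\{x\ge 1\}}$ reduces to a $2\times 2$ linear system, whose characteristic equation $4\lambda^2-2\lambda-1=0$ produces exactly the two bases $(1\pm\sqrt5)/4$, and the coefficients come from expanding $\E[K^n\one(\xi_0)]$ in the eigenbasis. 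Your route is more elementary and arguably more transparent (it explains \emph{why} the answer is a Fibonacci ratio $F_{n+4}/2^{n+2}$), while the paper's route illustrates that its general eigenvalue machinery handles discrete innovation laws as cleanly as continuous ones -- which is the rhetorical point of including the example. Either proof is complete; your Binet algebra and the small-$n$ sanity checks ($n=0$ gives $3/4$, matching $F_4/4$) confirm the identity.
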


%

Our final MA example considers MA(1) processes
with exponential innovation distribution. 

%

\begin{ppn} \label{ppn:example_ma_exponential_allq}
   Let $\{Z_i\}_{i\ge 0}$ denote an MA(1) process with $a_1\in (-1,0)$ 
and standard exponential
innovations. Then
$$
\P(\min_{0\le i\le n}Z_i\geq 0)=(1 + a_1 )^{n+o(n)}.
$$
\end{ppn}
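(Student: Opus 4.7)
The plan is to evaluate the persistence probability essentially in closed form by exploiting the memoryless property of the exponential and reducing to a product integral. Set $\alpha:=-a_1\in(0,1)$. Because $Z_i=\xi_i+a_1\xi_{i-1}$ and the innovations are non-negative, the event $\{Z_i\ge 0\}$ is simply $\{\xi_i\ge \alpha\,\xi_{i-1}\}$. Hence
\[
\P\!\left(\min_{0\le i\le n}Z_i\ge 0\right)=\int_{\R_+^{n+2}}\indi{x_i\ge \alpha x_{i-1},\,0\le i\le n}\prod_{i=-1}^n e^{-x_i}\,dx_i.
\]

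The main step is the change of variables $y_{-1}:=x_{-1}$ and $y_i:=x_i-\alpha x_{i-1}$ for $0\le i\le n$. The Jacobian is $1$ (lower-triangular with unit diagonal), and the constraints become $y_i\ge 0$ for every $i$. Solving the recursion gives the explicit formula
\[
x_i=\sum_{j=0}^{i+1}\alpha^j\,y_{i-j},
\]
so that, after reversing the order of summation,
\[
\sum_{i=-1}^n x_i=\sum_{k=-1}^n y_k\sum_{\ell=0}^{n-k}\alpha^\ell=\sum_{k=-1}^n y_k\,\frac{1-\alpha^{\,n-k+1}}{1-\alpha}.
\]
The resulting integral factorises into a product of one-dimensional exponential integrals, giving
\[
\P\!\left(\min_{0\le i\le n}Z_i\ge 0\right)=\prod_{k=-1}^n\frac{1-\alpha}{1-\alpha^{\,n-k+1}}=\prod_{m=1}^{n+2}\frac{1-\alpha}{1-\alpha^m}=(1-\alpha)^{n+2}\Big/\prod_{m=1}^{n+2}(1-\alpha^m).
\]

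Finally, since $\alpha\in(0,1)$, the Euler-type product $\prod_{m=1}^{\infty}(1-\alpha^m)$ converges to a strictly positive constant, so the denominator is $\Theta(1)$ as $n\to\infty$. Taking $n$-th roots yields the claimed persistence exponent
\[
\P\!\left(\min_{0\le i\le n}Z_i\ge 0\right)^{1/n}\longrightarrow 1-\alpha=1+a_1,
\]
which is exactly $(1+a_1)^{n+o(n)}$. There is no real obstacle: the entire argument is an explicit integral evaluation, with the only point requiring a sentence of justification being the convergence of the Euler product, which is standard for $|\alpha|<1$. Note that this also independently confirms the existence assertion of Theorem~\ref{thm:mainfty} for this particular parameter setting.
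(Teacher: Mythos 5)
Your proof is correct, and it is genuinely different from the paper's. The paper derives the exponent by solving the eigenvalue equation of Theorem~\ref{thm:mainfty}: it exhibits the eigenfunction $g(x)=e^{a_1 x/(1+a_1)}\one_{\{x\ge0\}}+\one_{\{x\le0\}}$ with eigenvalue $1+a_1$, and then must separately prove that this is the \emph{largest} eigenvalue, which requires an argument with the left eigenvector $m$ of Theorem~\ref{thm:exponent}. As noted in the paper's Remark after Theorem~\ref{thm:exponent}, this step is delicate precisely because $m$ is only finitely additive on the non-compact state space. Your approach sidesteps all of this: since the innovations are non-negative and $a_1<0$, the event $\{Z_i\geq0\}$ reduces to $\{\xi_i\geq\alpha\xi_{i-1}\}$ with $\alpha=-a_1$, and the linear change of variables $y_i=x_i-\alpha x_{i-1}$ (unit Jacobian, turning the constraints into $y_i\geq0$) factorizes the integral exactly. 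The reindexing of the double sum is correct, and the resulting product
\[
\prod_{m=1}^{n+2}\frac{1-\alpha}{1-\alpha^m}=\frac{(1-\alpha)^{n+2}}{\prod_{m=1}^{n+2}(1-\alpha^m)}
\]
converges (after dividing out $(1-\alpha)^{n+2}$) because the Euler product $\prod_{m\ge1}(1-\alpha^m)$ has a positive limit for $\alpha\in(0,1)$. This is not only more elementary than the paper's operator-theoretic route, it gives a sharper conclusion: the exact asymptotics $\P(\min_{0\le i\le n}Z_i\ge0)\sim c(\alpha)(1-\alpha)^{n}$ with $c(\alpha)=(1-\alpha)^2/\prod_{m\ge1}(1-\alpha^m)$, not merely the exponent. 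What the paper's approach buys, by contrast, is uniformity: it fits the general machinery used for all the other concrete examples in Section~\ref{sec:comp}, whereas your calculation relies essentially on the memoryless/non-negativity structure of the exponential and on $a_1<0$, and does not generalize to, say, uniform or Gaussian innovations.
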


\subsection{AR(1) processes}\label{sec:ar-concrete}
We now consider persistence exponent for AR(1) 
processes with uniformly distributed innovations.

\begin{ppn}\label{ppn:ar_unif}
Let $\{Z_i\}_{i\ge 0}$ be an AR(1) process with $a_1=-1$, arbitrary
initial distribution $\mu$, and with innovation density $\phi=$ $(a+b)^{-1} 
{\bf 1}_{(-a,b)}$, where $a,b>0$.
Then 
$$
\P(\min_{0\le i\le n}Z_i\geq 0) =\left(\frac{2b}{\pi(a+b)}\right)^{n+o(n)}.
$$
\end{ppn}


Our final example concerns exponential innovations.

\begin{ppn}\label{ppn:ar_exponential}
  Let $\{Z_i\}_{i\ge 0}$ be an AR(1) process with $a_1<0$, arbitrary 
initial distribution $\mu$, and standard exponential innovations.
Then 
$$
\P(\min_{0\le i\le n}Z_i\geq 0) =
\left(\frac{1}{1-a_1}\right)^{n-1} \E e^{a_1Z_0}1_{\{Z_0 \geq 0\}}.
$$
\end{ppn}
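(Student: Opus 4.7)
The plan is to use Theorem~\ref{thm:ar} in the most explicit way possible: compute the operator $P_S$ of \eqref{def-KAR} in closed form and iterate. Since
\[ \P(\min_{0\le i\le n}Z_i\ge 0) = \int_0^\infty P_S^n\mathbf{1}(x)\,\mu(\dd x), \]
with $P_S$ the operator \eqref{def-KAR} specialized to $p=1$ and $\phi(y)=e^{-y}\mathbf{1}_{y\ge 0}$, an explicit formula for $P_S^n\mathbf{1}$ yields the claimed identity as an equality for each finite $n$; no asymptotic argument is required, and in particular the hypothesis that $\mu$ charge all open sets is not needed.

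First I would compute $P_S\mathbf{1}(x)$ for $x\ge 0$. Since $a_1<0$, the constraint $y+a_1 x>0$ with $y\ge 0$ reduces to $y>-a_1 x\ge 0$, so
\[ P_S\mathbf{1}(x) \;=\; \int_{-a_1 x}^{\infty} e^{-y}\,\dd y \;=\; e^{a_1 x}. \]
Next, setting $\psi(x):=e^{a_1 x}$ and substituting $u=y+a_1 x$ in the definition of $P_S$ gives
\[ P_S\psi(x) \;=\; e^{a_1 x}\int_0^{\infty} e^{(a_1-1)u}\,\dd u \;=\; \frac{1}{1-a_1}\,e^{a_1 x}, \]
using $a_1<1$ for integrability. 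Hence $\psi$ is an eigenfunction of $P_S$ with eigenvalue $1/(1-a_1)$, and a straightforward induction on $n$ (with $P_S\mathbf{1}=\psi$ serving as base case) yields
\[ P_S^n\mathbf{1}(x) \;=\; \frac{1}{(1-a_1)^{n-1}}\,e^{a_1 x}, \qquad n\ge 1. \]

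Integrating this identity against $\mu$ produces
\[ \P\!\bigl(\min_{0\le i\le n}Z_i\ge 0\bigr) \;=\; \frac{1}{(1-a_1)^{n-1}}\,\E\!\bigl[e^{a_1 Z_0}\mathbf{1}_{\{Z_0\ge 0\}}\bigr], \]
which is the stated formula. There is essentially no obstacle here: the computation succeeds because the exponential density is an eigenfunction of the backward-shift-and-integrate operator underlying the AR(1) kernel with negative coefficient—a very special algebraic feature of the exponential. The only minor care needed is in aligning the induction's base case (where $P_S\mathbf{1}$ already coincides with the eigenvector $\psi$, so that the factor $1/(1-a_1)$ appears $n-1$ rather than $n$ times) and in justifying the change of variables used to identify $\psi$ as an eigenfunction.
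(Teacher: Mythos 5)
Your proof is correct and is essentially the same as the paper's: both compute $P_S\mathbf{1}(x)=e^{a_1x}$, observe that $e^{a_1x}$ is an eigenfunction of $P_S$ with eigenvalue $1/(1-a_1)$, and iterate to get $P_S^n\mathbf{1}=(1-a_1)^{-(n-1)}e^{a_1x}$ before integrating against $\mu$. The only difference is presentational: you spell out the induction and the change of variables, while the paper compresses the same computation into one line.
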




\section{Proof of the results of Section \ref{sec:intro}}\label{sec:intro-proof}

 \begin{proof}[Proof of Theorem \ref{thm:exponent}]
The upper bound is simple: using that 
  ${\bf 1}(\cdot)\in \cC_b(S)$, we obtain from \eqref{specraddef} that
$$\int_S [K^n({\bf 1})](x)\mu(\dd x)\le 
\sup_{x\in S} [K^n \one] (x) = \|K^n (\one)\|_\infty\leq \lambda(K)^{n+o(n)}.$$

We turn to the lower bound. We may and will assume
that $\lambda:=\lambda(K)>0$ since otherwise there is nothing left to prove.
%
Note that $\cC_b(S)$ equipped with the sup norm $\|g\|_\infty:=\sup_{x\in S}|g(x)|$ is a Banach space (even if $S$ is not compact, see \cite{dunfordschwartz}, p.\ 257). Thus denoting by $K^k:\cC_b(S)\mapsto \cC_b(S)$ the $k$-fold composition of $K$ (note that we consider $K^k$ acting 
on the smaller space $\cC_b(S)$), by assumption (i), $K^k$ is a compact operator.
Further, 
$$\lim_{n\to \infty} \big( \|(K^k)^n\| \big)^{1/n} = \left(\lim_{n\to\infty}\Big(\|K^{nk}({\bf 1})\|_\infty\Big)^{\frac{1}{kn}}\right)^k=\lambda(K)^k>0,
$$
and so an application of  the Krein-Rutman theorem  (see \cite[Theorem 19.2]{Deimling} and \cite[Problem 7.1.9]{abramovich}) yields
the existence of a non-negative continuous function $\tilde{\psi}\in \cC_b(S)$, $\tilde{\psi}\neq 0$, such that $$K^k\tilde{\psi}(x)=\lambda^k \tilde{\psi}(x),\qquad \forall x\in S.$$ 
Setting $$\psi(x):=\sum_{a=0}^{k-1}\lambda^a [K^{k-1-a}(\tilde{\psi})](x),$$
we note that $\psi\in \cC_b(S)$. Also note that $\psi(x)\ge \lambda^{k-1}\tilde{\psi}(x)$, and so $\psi$ is non-zero and non-negative. Finally, a telescopic cancellation gives
$$K\psi-\lambda \psi=\sum_{a=0}^{k-1}\lambda^a K^{k-a}(\tilde{\psi})-\sum_{a=0}^{k-1}\lambda^{a+1} K^{k-1-a}(\tilde{\psi})=K^k\tilde{\psi}-\lambda^k\tilde{\psi}=0,$$
and so $K\psi=\lambda \psi$. Thus, setting $c:=\|\psi\|_\infty>0$, we 
obtain (using that $K$ preserves the order)
$$ [K^n({\bf 1})](x)\ge \frac{1}{c}[K^n(\psi)](x)=\frac{1}{c}\lambda^n\psi(x). $$
Integrating the last inequality with respect to $\mu$ gives
$$
\int_S [K^n({\bf 1})](x)\mu(\dd x)\ge \frac{\int_S \psi(x)\mu(\dd x)}{c} \lambda^n.
$$
Since $\int_S\psi(x)\mu(\dd x)>0$ by assumption (ii) on $\mu$,
  the lower bound in \eqref{eq:exponent1} follows at once.

  Finally, the fact that $\lambda=\lambda(K)$ is the largest eigenvalue of $K$ follows from the fact that $\lambda^k$ is the largest eigenvalue of $K^k$, another consequence of the Krein-Rutman theorem. Also, existence of the left eigenvector $m$ follows from \cite[Exercise 12, p.\ 236]{Deimling}, along with the observation that the dual of $\cC_b(S)$ is the space of bounded,    finitely additive regular measures on $S$, see \cite[Theorem IV.6.2.2]{dunfordschwartz}.
\end{proof}

  \begin{proof}[Proof of Lemma \ref{lem:monotone}]
  Since $g\in \cB_{+,>}(S)$, using assumption (ii)  we have $K_1^i(g)\in \cB_{+,>}(S)$ for all $1\le i\le n-1$. Using condition (i) 
  $K_1(g)\ge h(x)K_2(g)=K_{2,h}(g)$, which is the desired conclusion for $i=1$. To verify the statement for general $i$, we  proceed by induction:
  \begin{align*}
  K_1^i(g) & =K_1( K_1^{i-1} (g) ) \ge h(x) K_2 ( K_1^{i-1}(g))=K_{2,h}(K_1^{i-1}(g))\ge K_{2,h}^i(g).
  \end{align*}
  In the last display,
  we use the fact that $K_1^{i-1}(g)\in \cB_{+,>}(S)$ along with condition (i) for the first inequality, and the induction hypothesis along with the fact that $K_{2,h}$ preserves the ordering in the second inequality, which is true of any non negative operator.
  \end{proof}

\begin{proof}[Proof of Lemma \ref{lem:conti}]
Since $\|K_\ell-K_\infty\|$ converges to $0$, w.l.o.g.\ assume $\|K_\ell-K_\infty\|\le1$. Also w.l.o.g.\ by scaling all operators involved if necessary, we can assume that $\|K_\infty\|\le 1$. Thus, for any $f\in \cB(S)$ with $\|f\|_\infty\le 1$ and $\delta\in (0,1/2)$ arbitrary we have
\begin{align*}
\|K_\ell^nf\|_\infty=&\|(K_\infty+K_\ell-K_\infty)^nf\|_\infty\\
\le &\lfloor n\delta \rfloor {n\choose \lfloor n\delta\rfloor}\|K_\infty^{n-\lfloor n\delta\rfloor}\|+2^n\|K_\ell-K_\infty\|^{\lfloor n\delta\rfloor}.
\end{align*}
On taking sup over $f$, invoking \eqref{specraddef}  along with Stirling's approximation gives
$$
\lambda(K_\ell)\le \max\Big( \delta^{-\delta} (1-\delta)^{1-\delta}\lambda(K_\infty)^{1-\delta}, 2 \|K_\ell-K_\infty\|^{\delta}\Big).
$$
Letting $\ell\rightarrow\infty$ followed by $\delta\rightarrow 0$ gives
$$\limsup_{\ell\rightarrow\infty}\lambda(K_\ell)\le \lambda(K_\infty),$$
which is the upper bound. The lower bound follows by a symmetric argument, reversing the roles of $K_\ell$ and $K_\infty$.
\end{proof}

 \section{Proofs of the results of Section \ref{sec:eg}}\label{sec:eg-proof}

 \subsection{Proof of results in Subsection \ref{sec:ma}}\label{sec:ma-proof} 
  \begin{proof}[Proof of theorem \ref{thm:mainfty}]
The MA($q$) process is $q$-dependent, and so with $m=\lfloor\frac{n}{q+1}\rfloor$ we have
$$\P(\min_{0\le i\le n}Z_i\geq 0)\le \P(\min_{0\le i\le m}Z_{i(q+1)}
\geq 0)=\P(Z_0\geq 0)^{m+1},$$ from which 
$\beta_F({\bf a})<1$ follows.

The sequence $\{Z_i\}_{i\ge 0}$ is well defined and stationary. We now show existence of the exponent using Theorem \ref{thm:exponent} with $k=q+1$. 
Setting ${\bf X}(i):=(\xi_{i-q},\ldots,\xi_{i})$ we have that $\{{\bf X}(i)\}_{i\ge 0}$ is a time homogenous Markov chain on $\R^{q+1}$. Thus, with $$S:=\{{\bf x}:x_{q+1}+\sum_{j=1}^q a_j x_{q+1-j}>0\},$$  
the $q+1$ fold operator $P_S^{q+1}$ is given by
\begin{align*}
&[P_S^{q+1}(g)](x_1,\ldots,x_{q+1})\\
&\qquad=\int_{\R^{q+1}} g(x_{q+2},\ldots,x_{2q+2})\prod_{\ell=q+2}^{2q+2} {\bf 1}_{x_{\ell}+ \sum_{j=1}^qa_jx_{\ell-j}>0}  \dd F(x_{\ell}),\end{align*}
where $F$ is the distribution function of the innovation distribution. 
Thus for any sequence $\{g_n\}_{n\ge 1}$ such that $\|g_n\|_\infty\le 1$ we have
$$\|P_S^{q+1}(g_n)-P_S^{q+1}(g_m)\|_\infty\le \|H_n-H_m\|_\infty,$$
where 
\begin{align*}
&H_n(s_{q+2},\ldots,s_{2q+1})\\
&\qquad :=\int_{\R^{q+1}} g_n(x_{q+2},\cdots,x_{2q+2})\prod_{\ell=q+2}^{2q+2} {\bf 1}_{x_\ell+s_{\ell}+ \sum_{j=1}^{\ell-q-2} a_jx_{\ell-j}>0}  \dd F(x_{\ell}),\end{align*}
with \fa{$s_{\ell}:=\sum_{j=\ell-q-1}^q a_jx_{\ell-j}$ for $\ell\in [q+2,2q+1]$, and $s_{2q+2}:=0$.}
 It thus suffices to show that $H_n$ is Cauchy in sup norm along a subsequence. 
To this end, we consider three sub cases depending on the value of ${\bf s}:=(s_{q+2},\cdots,s_{2q+1})$.

(a) If $s_{\ell}< -L$ for some $\ell\in [q+2,2q+1]$ then
  \begin{align*}
H_n(s_{q+2},\ldots,s_{2q+1})\le \P(\xi_{\ell}+\sum_{j=1}^{\ell-q-2 }a_j \xi_{\ell-j}>L),
\end{align*}
and so given $\varepsilon>0$ there exists $L=L(\varepsilon)<\infty$ such that
\begin{align}\label{eq:11}
\sup_{{\bf s}\in\R^{q}:\min_{q+2\le \ell\le 2q+1}s_\ell<-L}H_n({\bf s})\le \varepsilon.
\end{align}

 (b) If  ${\bf s}\in \R^{q}$ is such that for some $r\in \{1,\ldots,q\}$ the coordinates $s_{\ell_1},\ldots,s_{\ell_r}$ are in $[-L,L]$, and the other coordinates $s_{\ell_{r+1}}, \ldots, s_{\ell_q}$ are larger than $L$, then setting
   $H_{n,\ell_1,\ldots,\ell_r}(s_{\ell_1},\ldots,s_{\ell_r})$ to equal
 $$  \int_{\R^{q+1}} g_n(x_{q+2},\cdots,x_{2q+2})\prod_{k=0}^r {\bf 1}_{x_{\ell_k}+s_{\ell_k}+ \sum_{j=1}^{\ell_k-q-2} a_j x_{\ell_k-j}>0}  \prod_{\ell=q+2}^{2q+2} \dd F(x_{\ell})$$
 we have
$$|H_n(s_0,\ldots,s_q)-H_{n,\ell_0,\ldots,\ell_r}(s_{\ell_0},\ldots,s_{\ell_r})|
\le \P(\cup_{k=0}^r \{\xi_{\ell_k}+\!\sum_{j=1}^{\ell_k-q-2} \!
a_j \xi_{\ell_k-j}<-L\}),$$ and so again by choosing $L$ large enough we can ensure that
\begin{align}\label{eq:12}
\sup_{{\bf s}\in\R^{q}: s_{\ell_k}\in [-L,L], 1\le k\le r, s_{\ell_k}>L, r+1\le k\le q}|H_n({\bf s})-H_{n,\ell_0,\ldots,\ell_r}(s_{\ell_0},\ldots,s_{\ell_r})|\le \varepsilon.
\end{align}

(c) If ${\bf s}\in \R^{q}$ is such that
$s_\ell \in [-L,L]$ for all $\ell\in [q+2,2q+1]$, then we have
\begin{align*}
|H_n({\bf s})-H_n({\bf t})|
&\le \P(\{\min_{q+2\le \ell\le 2q+1}s_\ell+\xi_{\ell}+\sum_{j=1}^{\ell-q-2} a_j\xi_{\ell-j}>0\} \Delta \{\min_{q+2\le \ell \le 2q+1}t_\ell+\xi_{\ell}+\sum_{j=1}^{\ell-q-2} a_j\xi_{\ell-j}>0\}),
\end{align*}
where $\Delta$ is the symmetric difference between two sets.
\fa{Since the right-hand side in the last display
is continuous in the arguments ${\bf s}, {\bf t}$ over the compact set $[-L,L]^{2q}$ (due to the assumption of continuity of $F$), and vanishes on the set ${\bf s}={\bf t}$, it follows that given $\eta>0$ there exists $\delta=\delta(\eta,L)$ such that whenever $\|{\bf s}-{\bf t}\|_\infty<\delta$, we have $|H_n({\bf s})-H_n({\bf t})|<\eta$. In particular, this means that
$H_n(.)$ is uniformly equicontinuous on $[-L,L]^q$.} By the
Arzel\`a-Ascoli theorem, we have that $\{H_n\}_{n\ge 1}$ is compact with respect to sup norm topology on $[-L,L]^{q}$, and so there exists a subsequence which is Cauchy in sup norm. A similar argument applies to each of the functions $H_{n,\ell_0,\ldots,\ell_r}$ for all choices of $r\in \{1,\ldots,q\}$ and $\{\ell_1,\ldots,\ell_r\}$ which are  subsets of $\{q+2,\ldots,2q+1\}$ of size $r$. Thus by going through subsequences, we may assume all the  functions $H_{n,\ell_1,\ldots,\ell_r}$ are Cauchy in sup norm on $[-L,L]^{q}$.

Taking limits along the subsequence from step (c) gives
and using \eqref{eq:11} and \eqref{eq:12} gives
$$\limsup_{m,n\rightarrow\infty}\sup_{{\bf s}\in \R^{q}}|H_n({\bf s})-H_m({\bf s})|\le 2\varepsilon,$$
and so $\{H_n\}_{n\ge 1}$ is Cauchy in sup norm on $\R^{q}$. Thus it follows by an application of Theorem~\ref{thm:exponent} that the operator $P_S$ has largest eigenvalue $\beta_F({\bf a})$. Finally note that $[P_S](g)(x_0,\cdots,x_q)$ is by definition 
independent
of $x_0$, and so w.l.o.g.\ the eigenfunction $\psi$ can be taken to be a function of $q$ variables giving the eigenvalue equation
$$\beta_F({\bf a}) \psi(x_1,\cdots,x_q)\!=\!\int_{y+\sum_{j=1}^q a_j x_{q+1-j}>0}
\!\!\!\!
\!\!\!\!
\!\!\!\!
\!\!\!\!
\psi(x_2,\cdots,x_{q},y)\dd F(y)=[K(\psi)](x_1,\cdots,x_{q}),$$
where $K$ is as defined in the theorem. Thus,
$K$ satisfies the desired eigenvalue equation.

Finally it remains
to check condition (ii) in Theorem~\ref{thm:exponent}. To this end, setting $A$ to be the support of $F$, ${\bf X}$ is a Markov chain on $A^{q+1}$. Since sets of the form $$\{{\bf x}\in A^{q+1}:x_j\in U_j\cap A, 0\le j\le q\}$$ 
with $\{U_j,0\le j\le q\}$  open sets in $\R$ form a base of the topology on $A^{q+1}$ and since
$$\P(\xi_{j}\in U_j\cap A, 0\le j\le q)=\prod_{j=0}^q \P(\xi_j \in U_j\cap A),$$
it suffices to show
that $\P(\xi_0\in U\cap A)>0$ for every open (in $\R$)
set $U$ which  intersects $A$; this follows 
at once
from the assumption of a continuous distribution function. 

We have verified that the conditions of
Theorem \ref{thm:exponent} hold; an application of the latter 
yields the existence of $\beta_F({\bf a})$, and hence completes the proof of Theorem \ref{thm:mainfty}.
\end{proof}

\begin{proof}[Proof of Theorem \ref{thm:ma_conti}]
We recall the notation ${\bf X}(i):=(\xi_{i-q},\ldots,\xi_i)$ and set $S=S({\bf a}):=\{ (x_0,\ldots,x_q) | x_q + \sum_{i=1}^q a_j x_{q-j} > 0\}$.

Let $\{{\bf a}^{(k)}\}_{k\ge 1}$ be a sequence of vectors in $\R^q$ converging to ${\bf a}$. Then for any $1\le m\le n$ setting $M_n:=\lfloor \frac{n}{m+q}\rfloor$ and $$I_j:=\bigl[(j-1)(m+q)+1,(j-1)(m+q)+m\bigr] \; \mbox{\rm for}\;j\geq 1,$$ we have
\begin{align*}
&\P_{{\bf a}^{(k)}}(Z_i>0,0\le i\le n)\le \P_{{\bf a}^{(k)}}(Z_i>0, i\in I_j, 1\le j\le M_n)\\
&=\prod_{j=1}^{M_n}\P_{{\bf a}^{(k)}}(Z_i>0, i\in I_j)
=\P_{{\bf a}^{(k)}}(Z_i>0,1\le i\le m)^{M_n},
\end{align*}
which upon taking $\log$, dividing by $n$, and letting $n\rightarrow\infty$ we obtain that
$$\log \lambda(P_{S({\bf a}^{(k)}}))\le \frac{1}{m+q}\log \P_{{\bf a}^{(k)}}(Z_i>0, 1\le i\le m).$$
Letting $k\rightarrow\infty$ and noting that the distribution of $(Z_1,\cdots,Z_m)$ under ${\bf a}^{(k)}$ converges to the distribution of $(Z_1,\cdots,Z_m)$ under ${\bf a}$ gives
$$\limsup_{k\rightarrow\infty}\log \lambda(P_{S({\bf a}^{(k)}}))\le \frac{1}{m+q}\log \P_{{\bf a}}(Z_i>0, 1\le i\le m),$$
which upon letting $m\rightarrow\infty$ gives
$\limsup_{k\rightarrow\infty}\lambda(P_{S({\bf a}^{(k)})})\le \log\lambda(P_{S({\bf a})}),$
thus giving the upper bound.

We now turn to the lower bound. Fix $M>0$, set $S_M=S_M({\bf a}):=
S({\bf a})\cap [-M,M]^{q+1}$, and invoke Theorem \ref{thm:mainfty} to obtain
\begin{align*}
\P_{{\bf a}^{(k)}}({\bf X}(i)\in S,0\le i\le n)\ge &\P_{{\bf a}^{(k)}}({\bf X}(i)\in S_M,0\le i\le n)=\lambda(P_{S_M({\bf a}^{(k)})})^{n+o(n)},
\end{align*}
where $P_{S_M(\cdot)}$ is viewed as an operator on $\cB([-M,M]^{q+1})$ (and not $\cB(\R^{q+1})$). 
This gives
$ \lambda(P_{S({\bf a}_k)})\ge \lambda(P_{S_M({\bf a}_k)}).$
From this the lower bound will follow via Lemma \ref{lem:conti} if we can show the following:
\begin{align}
\lim_{k\rightarrow\infty}\|P_{S_M({\bf a}^{(k)})}-P_{S_M({\bf a})}\|=0\label{eq:ma_conti_1},\\
\limsup_{M\rightarrow\infty}\lambda(P_{S_M({\bf a})})\ge\lambda(P_S({\bf a}))\label{eq:ma_conti_2}.
\end{align}
To show \eqref{eq:ma_conti_1}, for any $f\in \cB([-M,M]^{q+1})$ such that $\|f\|_\infty\le 1$ we have
$$|(P_{S_M({\bf a}^{(k)})}f-P_{S_M({\bf a})}f)(x_1,\cdots,x_{q+1})| \le \P(A({\bf a}^{(k)})\Delta A({\bf a})|\xi_\ell=x_\ell, 1\le \ell\le q+1),$$
where $\Delta$ denotes symmetric set difference, and $$A({\bf a}):=\{\xi_\ell +\sum_{j=1}^q a_j\xi_{\ell-j}>0, q+2\le \ell \le 2q+2\}.$$ Setting $s_\ell({\bf a},{\bf x}):=\sum_{j=\ell-q-1}^qa_j x_{\ell-j}$ for $q+2\le \ell\le 2q+2$ we have 
\begin{align*}
 & \P(A({\bf a}^{(k)})\Delta A({\bf a})|\xi_\ell=x_\ell, 1\le \ell\le q+1)
 \\
 &=\P(\{\xi_\ell+\sum_{j=1}^{\ell-q-2}a_j^{(k)}{\xi_{\ell-j}}+s_\ell({\bf a}^{(k)},{\bf x})>0\}\Delta \{\xi_\ell+\sum_{j=1}^{\ell-q-2} a_j {\xi_{\ell-j}}+s_\ell({\bf a},{\bf x})>0\}).
 \end{align*}
Since ${\bf a}^{(k)}$ converges to ${\bf a}$  we have $\max_{{\bf x}\in [-M,M]^{q+1}}|s_\ell({\bf a}^{(k)},{\bf x})-s_\ell({\bf a},{\bf x})|=0$, which along with the continuity of distribution functions gives that the RHS above converges to $0$ as $k\rightarrow\infty$, uniformly in $(x_1,\cdots,x_{q+1})\in [-M,M]^{q+1}$, 
and so we have verified \eqref{eq:ma_conti_1}.

Proceeding to verify \eqref{eq:ma_conti_2}, fixing $M,\varepsilon>0$ and invoking Theorem \ref{thm:mainfty}  there exists $N:=N(\varepsilon,M)<\infty$ such that for all $n\ge N$ we have
$$ \P_{{\bf a}}({\bf X}(i)\in S_M,0\le i\le n)\le (\lambda(P_{S_M({\bf a})})+\varepsilon)^n.$$
Thus with $\delta>0$ and $\ell:=\lfloor n\delta\rfloor$ 
we have
\begin{align}
\notag&\fa{\P_{{\bf a}}({\bf X}(i)\in S,0\le i\le n)}\\
\notag\le &\fa{\P_{{\bf a}}(\exists (i_s)\in [0,n]_{\ell}: {\bf X}(i)\in S_M, i\notin \{i_1,\cdots,i_\ell\})}\\
+&\fa{\P_{{\bf a}}(\exists (i_s)\in [0,n]_{n-\ell}: {\bf X}(i_s)\notin S_M, 1\le s\le \ell)}\label{eq:2020_1},
\end{align}
where $[0,n]_\ell$ is the set of all distinct integer tuples of size $\ell$ with entries in $[0,n]$. Fixing an increasing sequence $(i_1,\cdots,i_{\ell})\in [0,n]_\ell$ let $j_s:=i_s-i_{s-1}$ for $1\le s\le\ell $ with $i_0:=0$ we have
\begin{align*}
\P_{{\bf a}}({\bf X}(i)\in S_M,i\notin \{i_1,\cdots,i_s\})
\le &\!\!\prod_{1\le s\le \ell: j_s-q\ge N}\!\!
\P_{{\bf a}}({\bf X}(i)\in S_M,1\le i\le j_s-q)\\
\le &(\lambda(P_{S_M({\bf a})})+\varepsilon)^{\sum_{s: j_s-q> N}(j_s-q)}.
\end{align*}
To estimate the exponent in the RHS above, first note that $\sum_{s=1}^\ell j_s=n+1-\ell\ge n-n\delta$, whereas $\sum_{s:j_s-q\le N}j_s\le (q+N)\ell\le (q+N)n\delta$. Combining these two estimates gives
$$\sum_{s:j_s-q>N}(j_s-q)\ge \sum_{s:j_s-q>N}j_s-q \ell\ge n-n\delta -(2q+N)n\delta,$$
which on using \eqref{eq:2020_1} gives 
\begin{align}
 \notag &\P_{{\bf a}}({\bf X}(i)\in S,0\le i\le n)\\
 \notag &\le \lfloor n\delta \rfloor {n+1\choose \lfloor n\delta \rfloor} (\lambda(P_{S_M({\bf a})})+\varepsilon)^{n-n\delta-(2q+N)n\delta}\\
\label{eq:2020_2}  +&\P_{{\bf a}}(\exists (i_s)\in [0,n]_{n-\ell}: {\bf X}(i_s)\notin S_M, 1\le s\le \ell).
\end{align}
To estimate the second term in the right hand side of \eqref{eq:2020_2}
note that ${\bf X}(i)\notin [-M,M]^{q+1}$ implies $(q+1)\sum_{j=i-q}^i|\xi_i|\ge M$, and so 
\begin{align*}
\P({\bf X}(i_s)\notin S_M, 1\le s\le \ell)\le &\P(
\sum_{i=-q}^n 1\{|\xi_i|>M\}\ge  \frac{M}{q+1})\\
 \le &\P\Big(\mbox{Bin}(n+q+1,\P(|\xi_1|>M))>\frac{n}{q+1}\delta\Big),
\end{align*}
which on taking $\log$, dividing by $n$ and letting $n\rightarrow\infty$ followed by $M\rightarrow\infty$ gives $-\infty$ for every fixed $\delta>0$, and so does not contribute. Thus,  taking $\log$, dividing by $n$ and letting $n\rightarrow\infty$ on both sides of \eqref{eq:2020_2} gives  $$\lim_{n\rightarrow\infty}\frac{1}{n}\log \P_{\bf a}({\bf X}(i)\in S, 1\le i\le n)\le (1-\delta(1+2q+N))\log(\lambda(P_{S_M({\bf a})})+\varepsilon).$$
On letting $M\rightarrow\infty$ followed by $\delta\rightarrow 0$ in the above display gives
$$\log \lambda(P_{S,{\bf a}})\le \liminf_{M\rightarrow\infty}\log \Big(\lambda(P_{S_M({\bf a})})+\varepsilon\Big),$$
which verfies \eqref{eq:ma_conti_2} as $\varepsilon>0$ is arbitrary, and hence completes the proof of the theorem.
\end{proof}

\begin{proof}[Proof of Proposition \ref{thm:mafinite}]
  We start with the ``if'' part. With $a_0:=1$, we have $\sum_{j=0}^qa_j\ne 0$. Assume $\sum_{j=0}^q a_j>0$ (the case $<0$ is analogous), and set $$A:=\sum_{j\in \{0,\ldots,q\}:a_j>0} a_j >  B:=\left|\sum_{j\in\{0,\ldots,q\}:a_j<0}a_j\right|.$$ 
 Since the distribution of $\xi$ is continuous and $\P(\xi_0>0)>0$, there exist $x>0$, $\delta\in \Big(0,\frac{x(A-B)}{B}\Big)$ such that
$\P(\xi_j\in (x,x+\delta))>0.$ 
This gives
 \begin{align*}
 Ax-B(x+\delta)=x(A-B)-B\delta >0,
 \end{align*}
and so
$$\P(\min_{i\in \{0,\ldots,n\}}Z_i>0)\ge \P(\xi_i\in (x,x+\delta),-q\le i\le n).$$
Indeed, to see this note that
\begin{align*}
Z_i=&\xi_i+\sum_{j=1}^q a_j \xi_{i-j}>x \sum_{j\in\{0,\ldots,q\}:a_j>0}|a_j|-(x+\delta)\sum_{j\in \{0,\ldots,q\}:a_j<0}|a_j|>0.
\end{align*}
The desired conclusion then follows on noting that $\P(\xi_i\in (x,x+\delta))>0$.

We continue with the ``only if'' part of the proposition. Define the numbers $b_i:=-\sum_{j=i+1}^q a_j$ and the MA($q-1$)-process $\tilde Z_i:=\sum_{j=1}^{q-1} b_j \xi_{i-j} + \xi_i$ for $i\ge -1$. A short computation using the assumption $\sum_{j=1}^q a_j = -1$ shows that $Z_i = \tilde{Z}_i - \tilde{Z}_{i-1}$ for all $i\ge 0$. Thus,
\begin{eqnarray*}
\P( \min_{0\le i \le n} Z_i > 0 ) &=& \P( \tilde Z_{0} < \tilde Z_1 < \ldots < \tilde Z_n)
\\
&\leq &\P( \tilde Z_{kq} < \tilde Z_{(k+1) q}, k\in\{0,\ldots,\lfloor n/q\rfloor\}).
\end{eqnarray*}
Now note that the random variables $\{\tilde Z_{k q}\}_{k\ge 1}$ are independent (since $\{\tilde Z_i\}_{i\ge 1}$ is $(q-1)$-dependent) and identically distributed (by the stationarity of $\tilde Z$). Thus, \eqref{eqn:krishnapur} shows that the last probability equals $1/(\lfloor n/q\rfloor+1)!$ giving $\beta_F({\bf a})=0$.
\end{proof}

 \subsection{Proof of the results of Section \ref{sec:auto}}

\begin{proof}[Proof of Theorem \ref{thm:ar}]
The proof is based on Theorem \ref{thm:exponent} with $S:=[0,\infty)^p\subset \R^p$, $K=P_S$ and $k=p$, and consists in checking the assumptions there, and in particular the compactness of $K^p$.

(a) If ${\bf a}={\bf 0}$ then the process is i.i.d.\ for which all conclusions are trivial. Thus assume w.l.o.g.\ that ${\bf a}\ne {\bf 0}$, and that $a_p<0$ (otherwise we can reduce the value of $p$).   Note that ${\bf X}(i):=(Z_{i-p+1},\ldots,Z_{i})$ is a Markov chain on $\R^p$. 
Note that
\begin{align*}
&[P_S^p(g)](x_1,\ldots,x_p)=\int\limits_{(0,\infty)^p} g(x_{p+1},\ldots,x_{2p}) \prod_{\ell=p+1}^{2p} \phi(x_\ell-\sum_{j=1}^p a_j x_{\ell-j}) \dd x_{\ell}\\
&\qquad\quad =\int\limits_{(0,\infty)^p}g(x_{p+1},\cdots,x_{2p}) H\Big((x_1,\cdots,x_p),(x_{p+1},\cdots,x_{2p})\Big) \prod_{\ell=p+1}^{2p} \dd x_{\ell},
\end{align*}
where $$ H\Big((x_1,\cdots,x_p),(x_{p+1},\cdots,x_{2p})\Big):= \prod_{\ell=p+1}^{2p} \phi(x_\ell-\sum_{j=1}^p a_j x_{\ell-j})$$ for $(x_1,\cdots,x_p,x_{p+1},\cdots,x_{2p})\in \R^{2p}.$
If $x_\ell>L$ for some $1\le \ell \le p$, then for $(x_1,\cdots,x_p,x_{p+1},\cdots,x_{2p})\in [0,\infty)^{2p}$ we have  $$x_{p+\ell}-\sum_{j=1}^p a_jx_{p+\ell-j}\ge x_{p+\ell}-a_p x_{\ell}\ge x_{p+\ell}-a_pL,$$ and so  given a sequence of functions $\{g_n\}_{n\ge 1}$ such that $\|g_n\|_\infty\le 1$ we have
$$ \sup_{{\bf x}\in S: \|{\bf x}\|_\infty  > L}|[P_S^p(g_n)]({\bf x})|\le 1-F(-a_pL),$$
where we recall that $F$ is the distribution function of the innovation density $\phi$.
Therefore, given $\varepsilon>0$ there exists $L=L(\varepsilon)<\infty$ such that
\begin{align}\label{eq:111}
 \sup_{{\bf x}\in S:\|{\bf x}\|_\infty> L}|[P_S^p(g_n)]({\bf x})|\le\varepsilon.
 \end{align}
On the other hand, for ${\bf x}^{(1)},{\bf x}^{(2)}\in [0,L]^p$ we have
\begin{align}\label{eq:112}
|[P_S^p (g_n)]({\bf x}^{(1)})-[P_S^p (g_n)]({\bf x}^{(2)})|\le \int_{\R^p}\Big| H({\bf x}^{(1)}, {\bf y})-H({\bf x}^{(2)},{\bf y})\Big| \dd {\bf y}.
\end{align}
Now given $\eta>0$ there exists a non negative continuous integrable function $\tilde{\phi}:\R\mapsto \R$ such that
$\int_\R |\phi(x)-\tilde{\phi}(x)|\dd x<\eta$, which in particular implies $\int_\R \tilde{\phi}(x)\dd x\le 1+\eta$. Using this, setting

$\tilde{H}\Big((x_1,\cdots,x_p),(x_{p+1},\cdots,x_{2p})\Big):=\prod_{\ell=p+1}^{2p}\tilde{ \phi}(x_\ell-\sum_{j=1}^p a_j x_{\ell-j})$

and using a telescopic argument we have
\begin{align}
\notag&\int_{\R^p}|H({\bf x},{\bf y})-\tilde{H}({\bf x},{\bf y})|\dd {\bf y}\\
\notag&=\int_{\R^p}\Big|\prod_{\ell=p+1}^{2p} \phi(x_\ell-\sum_{j=1}^pa_jx_{\ell-j})-\prod_{\ell=p+1}^{2p}\tilde{\phi}(x_\ell-\sum_{j=1}^pa_jx_{\ell-j})\Big|\prod_{\ell=p+1}^{2p} \dd x_\ell\\
\notag &\le \sum_{r=1}^p\int_{\R^p}\Biggl[ \prod_{\ell=p+1}^{p+r-1} {\phi}(x_\ell-\sum_{j=1}^pa_jx_{\ell-j}) \prod_{\ell=p+r+1}^{2p}\tilde{\phi}(x_\ell-\sum_{j=1}^pa_jx_{\ell-j})\\
\notag&\qquad\times\Big|\tilde{\phi}(x_{p+r}-\sum_{j=1}^pa_jx_{p+r-j})-\phi(x_{p+r}-\sum_{j=1}^pa_jx_{p+r-j})\Big| \Biggr]\prod_{\ell=p+1}^{2p}\dd x_\ell\\
\notag &\le\sum_{r=1}^p(1+\eta)^{p-r}\eta\int_{\R^{r-1}} \prod_{\ell=p+1}^{p+r-1}\phi(x_\ell-\sum_{j=1}^pa_jx_{\ell-j})\prod_{\ell=p+1}^{p+r-1}\dd x_\ell\\
&=\sum_{r=1}^p(1+\eta)^{p-r}\eta\le \eta p (1+\eta)^p\label{eq:H1}.
\end{align}
Finally, using the continuity of $\tilde{\phi}$ along with Scheffe's Lemma, there exists a $\delta=\delta(\eta, L)$ such that for all ${\bf x}^{(1)},{\bf x}^{(2)}\in [0,L]^p$ with $\|{\bf x}^{(1)}-{\bf x}^{(2)}\|_\infty<\delta$ we have 
\begin{align}\label{eq:H2}
\int_{\R^p}\Big|\tilde{H}({\bf x}^{(1)},{\bf y})-\tilde{H}({\bf x}^{(2)},{\bf y})\Big| \dd{\bf y}<\eta.
\end{align}
Combining \eqref{eq:112}, \eqref{eq:H1}, \eqref{eq:H2}, we conclude that for any ${\bf x}^{(1)},{\bf x}^{(2)}\in [0,L]^p$ such that $\|{\bf x}^{(1)}-{\bf x}^{(2)}\|_\infty<\delta$, we have
$$\Big|[P_S^p(g_n)]({\bf x}^{(1)})-[P_S^p(g_n)]({\bf x}^{(2)})\Big|\le \eta+2\eta p(1+\eta)^p.$$
%
%
Since $\eta>0$ is arbitrary, it follows that the sequence $\{P_S^p(g_n)\}_{n\ge 1}$ is uniformly equicontinuous on $[0,L]^p$.Thus by the
 Arzel\`a-Ascoli theorem, there exists a subsequence along which $\{P_S^p(g_n)\}_{n\ge 1}$ is Cauchy in sup norm  on $[0,L]^p$. Taking limits along this subsequence and using \eqref{eq:111} gives
 $$\limsup_{m,n\rightarrow\infty}\|P_S^p g_n-P_S^p g_m\|_\infty\le 2\varepsilon,$$
 and so we have proved the existence of a convergent subsequence in sup norm on $[0,\infty)$, and thus the compactness of $K^p$. An application of
   Theorem \ref{thm:exponent} then yields part (a).

%
%

   (b) The fact that $\theta_F({\bf a})>0$ follows from Proposition \ref{ppn:finite} along with the assumption that $\mu$ has full support, and $\P_F(\xi_1>0)>0$. For the other inequality, for any non-negative function $g\in \cB([0,\infty)^p)$ such that $\|g\|_\infty\le 1$ we have
\begin{align*}
&P_S(g)(x_1,\cdots,x_p)\\
=&\int_{y+\sum_{j=1}^pa_jx_{p+1-j}>0}g(x_2,\cdots,x_p,y+\sum_{j=1}^pa_jx_{p+1-j}) \phi(y)dy\\
\le&  \P(\xi_1>0)
\end{align*}
and so $\theta_F({\bf a})=\lambda(P_S)\le \|P_S\|_\infty\le  \P(\xi_1>0)<1$.

(c) By assumption we have $\lim_{k\rightarrow\infty}a_p^{(k)}=a_p<0$, and so there exists $\delta>0$ such that $a^{(k)}_p\le -\delta$ for all $k\ge 1$. Along with \eqref{eq:111}, this gives
\begin{align*}
|P_{{{\bf a}^{(k)}},S}^pf({\bf x})-P_{{\bf a},S}^pf({\bf x})|\le2(1-F(\delta L))+2\!\sup_{{\bf x}\in [0,L]^p}\!\|P^p_{{{\bf a}^{(k)}}}({\bf x},.)-P^p_{{\bf a}}({\bf x},.)\|_{TV},
\end{align*}
which on taking a sup over $f$ such that $\|f\|\le1$ and  letting $k\rightarrow\infty$ gives
$\limsup_{k\rightarrow\infty}\|P_{{{\bf a}^{(k)}},S}^p-P_{{\bf a},S}^p\|_\infty\le 2(1-F(\delta L))$. Upon letting $L\rightarrow\infty$ we have $\lim_{k\rightarrow\infty}\|P_{{{\bf a}^{(k)}},S}^p-P_{{\bf a},S}^p\|=0$, which,
 using Lemma \ref{lem:conti},
gives the desired conclusion.

\end{proof}

The proof of
Proposition \ref{ppn:ar_not_strict} uses the next 
lemma, which is an adaptation of 
\cite[Theorem 1.6]{DM} and \cite[Lemma 3.1]{DM2}
for discrete time Gaussian processes. Because of the discreteness of 
the involved processes, we are able to verify continuity of the persistence exponent of any stationary Gaussian process (with non negative correlations) in its levels (c.f. \eqref{eq:conti_level_gauss}), thereby removing one of the conditions of \cite[Theorem 1.6]{DM}.
\begin{lem}\label{lem:ref}
For all $k\ge 1$ let
 $\{Z_k(i)\}_{i\ge 0}$ be
a discrete time centered Gaussian sequence
with non negative covariance function $A_k(\cdot,\cdot)$,
such that 
\begin{align}\label{eq:conv}
\lim_{k\rightarrow\infty}\sup_{i\ge 0}|A_k(i,i+\tau)-A(\tau)|=0,\qquad {\text{for all $\tau\geq 0$,}}
\end{align}
for some function $A(\cdot)$. 
 Suppose further that 
\begin{align}
\label{eq:decay}\limsup_{k,\tau\rightarrow\infty}\sup_{i\ge 0}\frac{\log A_k(i,i+\tau)}{\log \tau}<-1.
\end{align}
Then for every $r\in \R$ we have
$$\lim_{k,n\rightarrow\infty}\frac{1}{n}\log \P(\min_{0\le i\le n}Z_k(i)>r)=\lim_{n\rightarrow\infty}\frac{1}{n}\log \P(\min_{0\le i\le n}Z(i)>r),$$
where $\{Z(i)\}_{i\ge 0}$ is a centered stationary Gaussian sequence with covariance $A(i-j)$. 
\end{lem}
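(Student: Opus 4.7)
The plan is to first establish that the right-hand side limit exists as a function $\lambda(r)$ of the level, then to sandwich the non-stationary persistence probability for $Z_k$ by persistence probabilities for the stationary limit $Z$ at slightly perturbed levels, and finally to close the argument by proving that $\lambda(r)$ is continuous in $r$.

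For Step 1 (existence for the stationary limit), write $\lambda(r):=-\lim_{n\to\infty}\frac{1}{n}\log\P(\min_{0\le i\le n}Z(i)>r)$. Since $Z$ is centered stationary Gaussian with non-negative correlations, Pitt's inequality (or equivalently Slepian/FKG for Gaussians) gives the super-multiplicativity
$$\P(\min_{0\le i\le n+m}Z(i)>r)\ge \P(\min_{0\le i\le n}Z(i)>r)\cdot \P(\min_{0\le i\le m}Z(i)>r),$$
so $-\log\P(\min_{0\le i\le n}Z(i)>r)$ is subadditive in $n$ and Fekete's lemma yields existence of $\lambda(r)$.  For Step 2 (sandwich), fix $r\in\R$ and $\epsilon>0$. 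By \eqref{eq:conv} the correlation matrix of $(Z_k(0),\ldots,Z_k(n))$ differs (entrywise) from that of $(Z(0),\ldots,Z(n))$ by at most some $\eta(k)\to 0$, uniformly in $n$. After a variance renormalization (absorbed into a level shift of size $O(\eta(k))$), Slepian's comparison inequality (applicable because all correlations are non-negative) produces constants $C<\infty$ such that
$$\P(\min_{0\le i\le n}Z(i)>r+C\eta(k))\le \P(\min_{0\le i\le n}Z_k(i)>r)\le \P(\min_{0\le i\le n}Z(i)>r-C\eta(k)).$$
Taking $\frac{1}{n}\log$ and $n\to\infty$ sandwiches the liminf/limsup in $n$ of the middle term between $-\lambda(r\pm C\eta(k))$; sending $k\to\infty$ and invoking continuity of $\lambda(\cdot)$ at $r$ then closes the double limit.

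The key technical obstacle---and what requires the decay hypothesis \eqref{eq:decay}---is establishing continuity of $\lambda(\cdot)$.  Here I would adapt the block-decoupling argument underlying \cite[Lemma 3.1]{DM2}: partition $\{0,\ldots,n\}$ into blocks of length $L$ separated by gaps of length $M$.  By \eqref{eq:decay}, all inter-block correlations are $O(M^{-\alpha})$ for some $\alpha>1$, and a Gaussian interpolation (again via Slepian, replacing inter-block covariances by zero at a cost quantified by Li--Shao's normal comparison) shows that the persistence probability factorizes over blocks up to a multiplicative error $\exp(o(n))$ as $M\to\infty$.  This yields the identity
$$\lambda(r)=\lim_{L\to\infty}\,\lim_{M\to\infty}\,-\frac{1}{L+M}\log\P(\min_{0\le i\le L}Z(i)>r),$$
whose inner quantity is a finite-dimensional Gaussian probability, hence continuous and monotone in $r$. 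Because $\lambda(\cdot)$ is a monotone pointwise limit of continuous functions, Dini's theorem (on a compact interval containing $r$) promotes this to continuity of $\lambda$ itself.  The main technical work lies in simultaneously controlling three error terms---the Slepian level shift $C\eta(k)$, the inter-block decoupling error from $M^{-\alpha}$, and the continuity modulus of $\lambda$---and sending them to zero in a compatible order, exploiting that the suprema over the time index $i$ in \eqref{eq:conv} and \eqref{eq:decay} make all bounds uniform in the non-stationarity of $Z_k$.
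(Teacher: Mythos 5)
Your overall architecture (existence for the stationary $Z$ via subadditivity, sandwich $Z_k$ by $Z$ at perturbed levels, close via continuity of $\lambda(\cdot)$ in the level) is a sensible plan and matches the paper's reduction in spirit---the paper likewise identifies continuity in the level, equation \eqref{eq:conti_level_gauss'}, as the missing ingredient needed to run the argument of \cite[Theorem 1.6]{DM}. However, two of your three steps contain genuine errors.

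\emph{Step 2 misuses Slepian.} Slepian's inequality compares two Gaussian vectors with equal variances when one covariance matrix dominates the other entrywise; it gives a one-sided inequality at the \emph{same} level, with no level shift. Entrywise closeness $|A_k(i,i+\tau)-A(\tau)|\le\eta(k)$ does not give an ordering in either direction, and the auxiliary kernels one would need, such as $A(\tau)\pm\eta(k)$ off the diagonal, are not positive semidefinite in general, so no comparison processes exist to which Slepian could be applied. The tool that \emph{does} handle nearby covariances is the two-sided normal comparison (Li--Shao / Berman), but its error is additive and scales like the number of index pairs, $\sim n^2\eta(k)$; since $\P(\min_{0\le i\le n}Z_k(i)>r)$ is exponentially small in $n$, such an additive error does not control $\frac1n\log\P$ in the regime of the double limit. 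This is exactly why a block decomposition is needed to transfer from finite windows to the full window, which is what the DM argument the paper invokes does.

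\emph{Step 3 misapplies Dini.} Dini's theorem takes continuity of the pointwise limit as a \emph{hypothesis} and concludes that the convergence is uniform; it cannot be used to establish continuity of the limit. A monotone pointwise limit of continuous functions is in general only semicontinuous (e.g.\ $x^n$ on $[0,1]$). So even if your block-decoupling representation of $\lambda(r)$ were correct (as written, $\lim_{M\to\infty}-\frac{1}{L+M}\log\P(\min_{0\le i\le L}Z(i)>r)=0$ since $L$ is held fixed, so the formula needs repair), the conclusion "hence $\lambda$ is continuous" does not follow. The paper proves \eqref{eq:conti_level_gauss'} by an entirely different route: it conditions on the number of indices $i$ with $Z(i)\in(-\varepsilon,0)$, shows via a spectral bound on the covariance submatrix of well-separated indices that having $\gtrsim n\delta$ such indices has super-exponentially small probability, and uses non-negative correlations (Slepian in the legitimate direction) to bound the remaining term by $\P(\min Z(i)>0)/\P(Z(0)>0)^{n\delta}$. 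You would need an argument of that type, or some other direct modulus-of-continuity bound on $\lambda$, to close Step 3.
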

\begin{proof}
To begin note that the proof of the continuous case \cite[Theorem 1.6]{DM} goes through verbatim  in the discrete case under \eqref{eq:decay} and the extra assumptions of \cite[Theorem 1.6]{DM}, namely
that  for every $r\in \R$ we have
\begin{align}\label{eq:conti_level_gauss}
\lim_{\varepsilon\downarrow 0}\lim_{n\rightarrow0}\frac{1}{n}\log \P(\min_{0\le i\le n}Z(i)>r-\epsilon)=\lim_{n\rightarrow0}\frac{1}{n}\log \P(\min_{0\le i\le n}Z(i)>r);
\end{align}
and that  for every $z\in\R$ and positive integer $M$, we have
\begin{align}
\notag\P(\sup_{0\le \tau\le M}Z(\tau)<z)\le &\liminf_{k\rightarrow\infty}\inf_{i\ge 0}\P(\sup_{0\le \tau\le M}Z_k(i+\tau )<z)\\
\label{eq:weak}\le &\limsup_{k\rightarrow\infty}\sup_{i\ge 0}\P(\sup_{0\le \tau\le M}Z_k(i+\tau )<z)\\
\notag\le &\P(\sup_{0\le \tau\le M}Z(\tau)\le z).
\end{align}
\cite[Theorem 1.6]{DM} considers the case $r=0$, but a similar argument applies for any $r\in \R$. It thus remains to verify these two extra conditions, of which \eqref{eq:weak} follows from \eqref{eq:conv}.
 Proceeding to verify \eqref{eq:conti_level_gauss}, 
fixing $\varepsilon,\delta>0$ and setting $\ell:=\lceil n\delta\rceil$ and intersecting with the set $\{|i\in [0,n]:Z(i)\in (r-\varepsilon,r]|>n\delta\}$ and its complement, we have
\begin{align}
\label{eq:conti_gauss_1}
&\fa{\P(\min_{0\le i\le n}Z(i)>r-\varepsilon)}  \\ 
& \fa{\le \P(\exists (i_s)\in [0,n]_{n-\ell}:\min_{1\le s\le\ell}Z(i_s)>r)}\\
\notag&\fa{\qquad+\P(\exists (i_s)\in [0,n]_{\ell}:Z(i_s)\in (r-\varepsilon,r], 1\le s\le \ell)}\\
&\fa{\notag \le\sum_{(i_s)\in [0,n]_{n-\ell}}\P(Z(i_s)>r)
+\sum_{(i_s)\in [0,n]_{\ell}}\P(Z(i_s)>(r-\varepsilon,r],1\le s\le \ell),}
\end{align}

%
where $[0,n]_\ell$ is the set of all integer tuples in $[0,n]^{\ell+1}$ with all entries distinct. 
For estimating the first term in the RHS of \eqref{eq:conti_gauss_1},  on the set $\{Z(i_s)\in (r-\varepsilon,r],0\le s\le \ell\}$  there must be at least $\ell'$ indices $\{j_1,\cdots,j_{\ell'}\}$ with $j_s-j_{s-1}\ge N$, where $\ell'\ge \frac{\lfloor n\delta\rfloor}{N}$, for any $N\ge 1$. Thus, if $B$ denotes the covariance matrix of $\{Z(j_1),\cdots,Z(j_\ell')\}$, we have
$$\max_{i=1}^{\ell'}\sum_{j:j\ne i}B(i,j)\le 2\sum_{i=N}^\infty g(i),$$
where $g$ is a summable function satisfying $$\sup_{i\ge 0,k\ge 1}A_k(i,i+\tau)\le g(\tau)$$ for all $\tau\ge 0$, the existence of which is guaranteed by \eqref{eq:decay}. 
\textcolor{black}{By choosing $N$ large enough we can ensure that 
  $$\max_{1\le i\le \ell'}\sum_{j:j\ne i}B(i,j)\le \frac{B(i,i)}{2}=\frac{A(0)}{2}.$$
  Consequently, by the Gershgorin Circle Theorem all eigenvalues of $B$ lie within $[A(0)/2,3A(0)/2]$. This gives
\begin{align*}
&\P(Z(j_s)\in (r-\varepsilon,r], 1\le s\le \ell')=\frac{\int_{(r-\varepsilon,r]^{\ell'}} e^{-z'B^{-1}z/2}dz}{\int_{\R^{\ell'} }e^{-z'B^{-1}z/2}dz}\\
&\qquad  \le  \frac{\int_{(r-\varepsilon,r]^{\ell'}} e^{-z'z/3A(0)}dz}{\int_{\R^{\ell'} }e^{-z'z/A(0)}dz}
\le 3^{\ell'/2}\P(\cN(0,3A(0)/2)\in (r-\varepsilon,r])^{\ell'}\\
&\qquad \le 3^{\frac{n+1}{2}}\P(\cN(0,3A(0)/2)\in (r-\varepsilon,r])^{\frac{n\delta}{N}}.
\end{align*}
Plugging this in the first term of the RHS of \eqref{eq:conti_gauss_1} we get the bound
$$(n+1)(2\sqrt{3})^{n+1}\P(\cN(0,3A(0)/2)\in (r-\varepsilon,r])^{\frac{n\delta}{N}},$$
which on taking $\log$, dividing by $n$ and letting $n\rightarrow\infty$ followed by $\varepsilon\rightarrow 0$ gives $-\infty$, for every $\delta>0$ fixed. Thus this term does not contribute to the limit.}
For estimating the second term in the RHS of \eqref{eq:conti_gauss_1}, 
using the non negativity of the correlation function along with Slepian's inequality, we get
$$\P(\min_{0\le s\le n-\ell}Z(i_s)>0)\le \frac{\P(\min_{0\le i\le n}Z(i)>0)}{\P(Z(0)>0)^{n\delta}}.$$ This gives the bound $$(n+1){n+1\choose \lceil n\delta\rceil}\frac{\P(\min_{0\le i\le n}Z(i)>0)}{\P(Z(0)>0)^{n\delta}}$$
for the second term in the RHS of
\eqref{eq:conti_gauss_1}. Taking $\log$, dividing by $n$ and letting $n\rightarrow\infty$ followed by $\delta\rightarrow0$, we conclude that
$$\lim_{\varepsilon\downarrow 0}\lim_{n\rightarrow\infty}\frac{1}{n}\log\P(\min_{0\le i\le n}Z(i)>r-\varepsilon) \le \lim_{n\rightarrow\infty}\frac{1}{n}\log \P(\min_{0\le i\le n}Z(i)>r),$$
verifying \eqref{eq:conti_level_gauss}, and hence completing the proof of the lemma.
\end{proof}
\begin{proof}[Proof of Proposition \ref{ppn:ar_not_strict}]
(a)
In this case $\{Z_i\}_{i\ge 0}$ is a stationary Gaussian sequence with non-negative summable correlations, and the conclusions follow from Slepian's Lemma along with sub-additivity. 
\\
(b)
 Note that \eqref{eq:condition_1} implies the existence of
 a sequence of positive reals $\{x_k\}$ diverging to $+\infty$ such that
 $\P_\mu(Z_0>x_k)\ge x_k^{-\varepsilon_k}$,  where $\{\varepsilon_k\}$ is a positive sequence converging to $0$. W.l.o.g., by replacing $\varepsilon_k$ by $\max(\varepsilon_k,\frac{1}{\log x_k})$  if necessary, we can also assume that $\sqrt{\varepsilon}_k\log x_k$ diverges to $+\infty$. Setting $m_k:=\lfloor \sqrt{\varepsilon_k}\log x_k\rfloor $ and fixing $a_1\in (0,1)$, for any $M<\infty$, for all $k$ large enough we have
\begin{align*}
  \P(\min_{0\le i\le m_k}Z_i\geq 0)\ge \P( Z_0>x_k ,|\xi_i|\le M,1\le i\le m_k).
\end{align*}
Indeed, this is because on this set we have
\begin{align*}
Z_i=a_1^{ {i}} Z_0+\sum_{j=0}^{i-1} a_1^{j}\xi_{i-j}> a_1^{m_k} 
x_k-\frac{M}{1-a_1}\os{$k\to\infty$}{\longrightarrow} \infty.
\end{align*}
Therefore,  {for $k$ large enough (depending on $a_1$ and $M$)}
   \begin{align*}
  \P(\min_{0\le i\le m_k}Z_i\geq 0)\ge& \P(Z_0>x_k)\P(|\xi_1|\le M)^{m_k}\ge x_k^{-\varepsilon_k} \P(|\xi_1|\le M)^{m_k}, 
\end{align*}
implying that
$$\liminf_{k\rightarrow\infty}\frac{1}{m_k}\log \P(\min_{0\le i\le m_k}Z_i\geq 0)\ge \log \P(|\xi_1|\le M).$$
 Upon letting $M\rightarrow\infty$, \eqref{eq:no_limit_1} follows. 
 
 Proceeding to verifying
  \eqref{eq:no_limit_2}, use \eqref{eq:condition_2} to get the existence of a sequence of positive reals $\{y_k\}_{k\ge 1}$ diverging to $+\infty$, such that
 $\P_\mu(Z_0>y_k)\le y_k^{-N_k}$,  where $\{N_k\}$ is a sequence of positive reals diverging to $+\infty$. Set $n_k:=\lceil \sqrt{N_k}\log y_k\rceil$, and for  any $\delta>0$ set $n_k':=\lceil \frac{\log y_k-\log \delta}{\log (1/a_1)}\rceil$ to note that
\begin{align}\label{eq:no_limit_3}
\notag\P(\min_{0\le i\le n_k}Z_i\geq 0)\le &\P(Z_0>y_k)+\P(0\le Z_0\le y_k, \min_{1\le i\le n_k}Z_i  {\ge} 0)\\
 \le &y_k^{-N_k}+\P(\min_{0\le i\le n_k-n_k'}Y_{i+n_k'}  {\ge} -\delta),
 \end{align}
 where  $Y_i:=\sum_{\ell=1}^i a_1^{i-\ell}\xi_\ell$ for $1\le i\le n$, and we use the fact that $n_k'<n_k$ for all $k$ large enough. Since the first term in the RHS of \eqref{eq:no_limit_3} on taking $\log$, dividing by $n_k$ and letting $k\rightarrow\infty$ gives $-\infty$, it suffices to consider the second term. \textcolor{black}{To this effect, note that the sequence $\{Y_{i+n_k'}\}_{i\ge 0}$ is a Gaussian sequence with non negative covariance $${A}_k(i,i+\tau):=a_1^{\tau}(1+a_1^2+\cdots+a_1^{2(i+n_k'-1)}),$$ which 
 satisfies \eqref{eq:conv} of Lemma \ref{lem:ref} with $A(\tau):=a_1^\tau(1-a_1^2)^{-1}$.} Also, \eqref{eq:decay} is immediate as well, and so by an application of Lemma \ref{lem:ref}, for any $r\in \R$ we have
%
%
 the second term in the RHS of \eqref{eq:no_limit_3} satisfies
 \begin{align}\label{eq:2020_3}
 \lim_{k\rightarrow\infty}\frac{1}{n_k}\log \P(\min_{n_k'\le i\le n_k}Y_i {\ge}r)=\lim_{n\rightarrow\infty}\frac{1}{n}\log \P_{\NN}(\min_{0\le i\le n}Z(i) {\ge}r).
 \end{align}
 Using this with $r=-\delta$ along with \eqref{eq:no_limit_3} gives the upper bound in \eqref{eq:no_limit_2}, upon 
 letting $\delta\rightarrow 0$ and noting that \eqref{eq:conti_level_gauss} holds for any centered stationary Gaussian process with non negative summable correlations, as shown in the proof of Lemma \ref{lem:ref}. 
 
 To get the lower bound of \eqref{eq:no_limit_2}, note that
  \begin{align}
 \notag \P(\min_{0\le i\le n_k}Z_i\geq 0)\ge & \P(Z_0\ge 0, \min_{1\le i\le n_k'}Y_i>0, \min_{0\le i\le n_k-n_k'}Y_{i+n_k'}>0)\\
\notag  \ge &\P(Z_0\ge 0, \min_{1\le i\le n_k'}\xi_i>0, \min_{0\le i\le n_k-n_k'}Y_{i+n_k'}>0)\\
 \label{eq:lb_2020} \ge &\P(Z_0\ge 0) \P(\xi_1>0)^{n_k'} \P(\min_{0\le i\le n_k-n_k'}Y_{i+n_k'}>0),
  \end{align}
  where the last inequality uses positive association. The lower bound of \eqref{eq:no_limit_2} follows from \eqref{eq:lb_2020}, on noting that the the first two terms in \eqref{eq:lb_2020} do not contribute to the limit, and the third term can be analyzed using \eqref{eq:2020_3} with $r=0$.\\
 (c)
 The proof of part (c) follows by a similar argument as that of part (b). For the upper bound, fixing $x_0\in \R$ and $\delta>0$, setting $N:=\max\Big\{1,\big\lceil \frac{\log |x_0|-\log \delta}{\log \frac{1}{a_1}}\big\rceil\Big\}$ we have
 \begin{align*}
 \P(\min_{1\le i\le n}Z_i\ge 0|Z_0=x_0)\le \P(\min_{N\le i\le n}Y_i\ge -\delta)
 \end{align*}
 where $Y_i=\sum_{\ell=1}^i a_1^{i-\ell}\xi_\ell$.
 An argument similar to the proof of part (b), 
 using Lemma \ref{lem:ref} with $r=-\delta$, then gives 
 $$\lim_{\delta\downarrow 0}\lim_{n\rightarrow\infty}\frac{1}{n}\log \P(\min_{N\le i\le n}Y_i\ge -\delta)=\lim_{n\rightarrow\infty}\frac{1}{n}\log \P_{\NN}(\min_{0\le i\le n}Z(i) {\ge}0).$$
 For the lower bound, using Slepian's Lemma gives
 \begin{align*}
 \P(\min_{1\le i\le n}Z_i\ge 0|Z_0=x_0)\ge & \P(\min_{1\le i\le N-1}Z_i\ge 0|Z_0=x_0)\P(\min_{N\le i\le n}Z_i\ge 0|Z_0=x_0).
 \end{align*}
 Invoking Lemma 
 \ref{lem:ref} with $r=0$ controls the second factor in the RHS of the last display,
 whereas the first factor remains bounded away from $0$ and therefore 
 does not contribute to the limit. 
\end{proof}

\begin{proof}[Proof of Theorem \ref{thm:ar+}]
(a)
By assumption there exists $\delta_1>0$ such that $\E e^{\delta_1 \sum_{j=0}^{p-1}Z_j}1_{\{\min_{0\le j\le p-1}Z_j>0\}}<\infty$.  Invoking \eqref{eq:+} gives the existence of $\delta_2>0$ and $C_2<\infty$ such that $\phi(t)\le C_2 e^{-\delta_2|t|}$. Fix $\delta<\min(\delta_1,\delta_2/p)$,  and set $h({\bf x}):=e^{\delta\sum_{j=0}^{p-1}x_j}$.
For any $k\geq 1$, define 
  $$F(x_1,\cdots,x_{(k+1)p}):=\frac{h(x_{kp+1},\cdots,x_{(k+1)p})}{h(x_1,\cdots,x_p)}\prod_{\ell=p+1}^{(k+1)p}\phi(x_\ell-\sum_{j=1}^pa_jx_{\ell-j}).$$
We have the following lemma, whose proof is deferred.
%
\begin{lem}\label{lem:k+}
  There exist $k\ge 1$  and $C,\gamma>0$ such that for any vector $(x_1,\cdots,x_{(k+1)p})\in  [0,\infty)^{(k+1)p}$ we have
\begin{equation}\label{eq:k+}
F(x_1,\cdots,x_{(k+1)p})
\le C e^{-\gamma\sum_{j=1}^{(k+1)p}x_j}. 
\end{equation}
Further, the constants $C$ and $\gamma$ can be chosen uniformly over the parameter space $\sum_{j=1}^pa_j\le 1-\eta$ for any $\eta>0$.
\end{lem}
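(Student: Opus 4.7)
The plan is to combine the exponential tail bound $\phi(t)\le C_2 e^{-\delta_2|t|}$ with a contraction argument driven by the companion matrix of the AR($p$) recursion. Writing $\xi_\ell:=x_\ell-\sum_{j=1}^p a_j x_{\ell-j}$ and applying the bound to each of the $kp$ factors of $\phi$ reduces the claim to showing that the exponent
$$
\delta\!\!\!\sum_{j=kp+1}^{(k+1)p}\!\!\!x_j-\delta\!\sum_{j=1}^p x_j-\delta_2\!\!\!\sum_{\ell=p+1}^{(k+1)p}\!\!\!|\xi_\ell|
$$
is bounded above by $-\gamma\sum_{j=1}^{(k+1)p}x_j$ plus a constant, so that everything hinges on dominating the single positive term $\delta\sum_{j=kp+1}^{(k+1)p}x_j$.

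Next I would pass to companion-matrix form of the recursion: with $Y_\ell:=(x_\ell,\ldots,x_{\ell-p+1})^T$ and $A$ the $p\times p$ companion matrix of $(a_1,\ldots,a_p)$, one has $Y_\ell=A^{\ell-p}Y_p+\sum_{m=p+1}^\ell A^{\ell-m}e_1\,\xi_m$. The contractivity assumption forces the spectral radius of $A$ strictly below $1$, uniformly on any compact slice of parameters, and Gelfand's formula supplies $\rho_*\in(0,1)$ and $C_*<\infty$ (depending only on the slice) with $\|A^n\|_\infty\le C_*\rho_*^n$. Summing the resulting componentwise bound over $\ell\in\{kp+1,\ldots,(k+1)p\}$ yields
$$
\sum_{j=kp+1}^{(k+1)p}x_j\le\tilde C_1\rho_*^{kp}\!\sum_{j=1}^p x_j+\tilde C_2\!\sum_{m=p+1}^{(k+1)p}\!|\xi_m|,
$$
with $\tilde C_1,\tilde C_2$ functions only of $C_*,\rho_*,p$. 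Inserting this into the exponent and choosing $k$ large enough that $\tilde C_1\rho_*^{kp}\le 1/2$, the coefficient of $\sum_{j=1}^p x_j$ becomes $\le-\delta/2$ while the coefficient of $\sum|\xi_m|$ becomes $\delta\tilde C_2-\delta_2$, which is strictly negative under the hypothesis on $\delta$ (calibrated together with the free parameter $\delta_2$ in \eqref{eq:+}). One arrives at $F\le C\exp\!\bigl(-\alpha\sum_{j=1}^p x_j-\beta\sum|\xi_m|\bigr)$ for some $\alpha,\beta>0$ uniform in $\eta$.

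To convert this into decay in the full sum $\sum_{j=1}^{(k+1)p}x_j$, I would use the complementary lower bound obtained by summing the identity $\xi_\ell=x_\ell-\sum_j a_j x_{\ell-j}$ and using $x_\ell\ge 0$:
$$
\sum_{\ell=p+1}^{(k+1)p}|\xi_\ell|\;\ge\;\Bigl(1-\textstyle\sum_j a_j\Bigr)\!\!\!\sum_{\ell=p+1}^{(k+1)p}\!x_\ell\;-\;C'\sum_{j=1}^p x_j,
$$
with a boundary constant $C'$ uniform in the parameter slice. Under $\sum_j a_j\le 1-\eta$, this transfers $\eta\beta\sum_{\ell>p}x_\ell$ of decay from $-\beta\sum|\xi_m|$ to the block sum, and any small deficit in the coefficient of $\sum_{j=1}^p x_j$ is absorbed by enlarging $k$. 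The net outcome is the desired $F\le Ce^{-\gamma\sum_j x_j}$.

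The main obstacle is the quantitative step where the coefficient $\delta\tilde C_2-\delta_2$ must be made strictly negative with constants uniform over the slice $\sum_j a_j\le 1-\eta$. The crude Lyapunov bound via $\|A^n\|\le C_*\rho_*^n$ may inflate $\tilde C_2$ beyond the natural threshold $p$ suggested by the hypothesis $\delta<\delta_2/p$, and a clean argument may require either a block-level telescoping of $|\xi_m|$ within the last $p$ steps or, equivalently, a per-step tilt $\prod_\ell e^{-\theta_\ell\xi_\ell}$ with $|\theta_\ell|\le\delta_2$ chosen adaptively to offset the boundary contributions. Once this calibration is handled, the uniformity in $\eta$ is an immediate consequence of continuity of the spectral radius on the compact parameter set.
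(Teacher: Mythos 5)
Your route is genuinely different from the paper's, but it contains a gap that you yourself flag and do not close. The paper does not use a companion-matrix/Lyapunov bound; instead it builds an explicit \emph{adaptive per-step tilt}: a backward recursion for weights $A_1,\dots,A_{kp}$ satisfying $A_\ell-\sum_j|a_j|A_{\ell-j}=\delta(1+\varepsilon)$ for $\ell\le p$ and $=\min(\rho^\ell,\varepsilon)$ for $\ell>p$, then applies $\phi(t)\le C_2 e^{-A_\ell|t|}$ to the $\ell$-th factor. The three inductive bounds (claims \eqref{claim1}, \eqref{claim2}, and the intermediate $A_\ell\le(K+\ell)\rho^\ell$) guarantee simultaneously that every $A_\ell<\delta_2$ (so the tilt is admissible), that the boundary coefficient on $x_1,\dots,x_p$ is strictly positive, and that the coefficient on $x_{kp+1},\dots,x_{(k+1)p}$ becomes strictly positive once $k$ is large. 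This is precisely the ``per-step tilt $\prod_\ell e^{-\theta_\ell\xi_\ell}$ with $|\theta_\ell|\le\delta_2$ chosen adaptively'' that you name as the likely fix in your last paragraph, but do not carry out.

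The gap in your argument is concrete. After the Lyapunov step you need the coefficient $\delta\tilde C_2-\delta_2$ to be strictly negative. Your $\tilde C_2$ comes from $\sum_{\ell=kp+1}^{(k+1)p}\sum_{m}\|A^{\ell-m}\|\,|\xi_m|$ and is of order $C_*/(1-\rho_*)$, which can be far larger than $p$ even when $\sum_j|a_j|$ is well below $1$. The hypothesis on $\delta$ from the proof of Theorem~\ref{thm:ar+} is only $\delta<\min(\delta_1,\delta_2/p)$, and Lemma~\ref{lem:k+} must hold for that fixed $\delta$; you are not free to shrink $\delta$ afterwards to compensate for a large $\tilde C_2$, because the proof of Theorem~\ref{thm:ar+} has already committed to a $\delta$ before the lemma is invoked. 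So ``calibrated together with the free parameter $\delta_2$'' does not close the estimate: with a uniform worst-case Lyapunov constant, $\delta\tilde C_2-\delta_2$ can be positive. In addition, the telescoping inequality $\sum_\ell|\xi_\ell|\ge(1-\sum_j a_j)\sum_\ell x_\ell-C'\sum_{j\le p}x_j$ is only safe when the $a_j$ have one sign; under the hypothesis $\sum_j|a_j|<1$ the $a_j$ may be mixed in sign, and then passing from $|\sum\xi_\ell|$ to the claimed lower bound requires more care with the boundary terms than your one-line display allows. The paper sidesteps both issues by working directly with $|a_j|$, defining the $A_\ell$ so that the coefficients $\alpha_\ell$ (the sum of signed tilts applied to $x_\ell$) are manifestly positive at every index, and by tying $\rho$ to the equation $\sum_j|a_j|\rho^{-j}=1$, which makes the uniformity over $\sum_j|a_j|\le 1-\eta$ explicit rather than routed through Gelfand's formula.

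In short: the companion-matrix idea is a plausible alternative, but as written it does not prove the lemma; the missing calibration step is exactly the content of the paper's argument, which replaces the crude $\|A^n\|\le C_*\rho_*^n$ bound by an exact inductive construction of admissible exponential weights.
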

Continuing with the proof of Theorem \ref{thm:ar+},
define the operator $Q_{\delta,k}$ on $\cC_b([0,\infty)^p)$ by 
$$Q_{\delta,k} g(x_1,\ldots,x_p)=\int_{[0,\infty)^{kp}}g(x_{kp+1,\cdots,(k+1)p})F(x_1,\cdots,x_{(k+1)p})  \prod_{\ell=p+1}^{(k+1)p} \dd x_\ell.$$
Lemma \ref{lem:k+} ensures that $Q_{\delta,k}{\bf 1}$ is a bounded function, and hence $Q_{\delta,k}$ is well defined on $\cB([0,\infty)^p)$. Note that $gh$ is bounded, and $Q_{\delta,k}(g)=\frac{1}{h} P_S^{kp}(gh)$ by definition, which on using induction gives $Q_{\delta,k}^i(g)=\frac{1}{h}P_S^{kpi}(gh)$ for $i\ge 1$. Thus we have
$${p_n=\int P_S^n{\bf 1} \dd \mu \le \int h Q_{\delta,k}^{\lfloor \frac{n}{kp}\rfloor}(\frac{1}{h}) \dd \mu} \le \|Q_{\delta,k}^{\lfloor \frac{n}{kp}\rfloor}\| \mu(h1_{[0,\infty)^p}),$$
which, because of  the assumption $\mu(h1_{\{[0,\infty)^p\}})<\infty$, results in
\begin{align}\label{eq:ub+}
\limsup_{n\rightarrow\infty}\frac{1}{n}\log p_n\le \frac{1}{kp}\log \lambda(Q_{\delta,k}).
\end{align}
For the lower bound on the persistence probability, fixing $M>0$ we have
\begin{align}\label{eq:lb+}P_S^n{\bf 1}\ge  Q_{\delta,k}^{\lceil \frac{n}{kp}\rceil}(\frac{1}{h})\ge e^{-p\delta M} Q_{\delta,k,M}^{\lceil \frac{n}{kp}\rceil}({\bf 1}_{[0,M]^p}),
\end{align}
where $Q_{\delta,k,M}$ is the operator from $\cB([0,\infty)^p)$ to itself
given by
\begin{align*}
  &Q_{\delta,k,M} g(x_1,\ldots,x_p)=\\
&1_{\{\max_{1\le \ell \le p}x_\ell\le M\}}
\int_{[0,M]^{kp}}g(x_{kp+1,\cdots,(k+1)p})F(x_1,\cdots,x_{(k+1)p})  \prod_{\ell=p+1}^{(k+1)p} \dd x_\ell.
\end{align*}
For any $f\in \cB([0,\infty)^p)$ with $\|f\|_\infty\le 1$, and $\max_{1\le \ell\le p}x_\ell >M$ we  have that
\begin{align*}
&|Q_{\delta,k,M}f(x_1,\cdots,x_p)-Q_{\delta,k}f(x_1,\cdots,x_p)|
\le Q_{\delta,k} {\bf 1}(x_1,\cdots,x_p) \\ 
= &\int_{[0,\infty)^{kp}} F(x_1,\cdots,x_{(k+1)p}) \prod_{\ell=p+1}^{(k+1)p}\dd x_\ell
\le Ce^{-\gamma M}\gamma^{-kp},
\end{align*}
where the last inequality is due to  \eqref{eq:k+}.
Similarly,  if $\max_{1\le \ell\le p}x_\ell\le M$, then 
\begin{align*}
&|Q_{\delta,k,M}f(x_1,\cdots,x_p)-Q_{\delta,k}f(x_1,\cdots,x_p)|\\
\le & \int_{\max_{p+1\le \ell\le (k+1)p}x_\ell>M } F(x_1,\cdots,x_{(k+1)p})\prod_{\ell=p+1}^{(k+1)p}\dd x_\ell
\le C kp \gamma^{-kp} e^{-\gamma M}.
\end{align*}
Combining these two estimates gives $\lim_{M\rightarrow\infty}\|Q_{\delta,k,M}-Q_{\delta,k}\|=0,$ and consequently Lemma \ref{lem:conti} gives
\begin{align}\label{eq:lim+}
\lim_{M\rightarrow\infty}\lambda(Q_{\delta,k,M})=\lambda(Q_{\delta,k}).
\end{align}
Denoting by $\tilde{Q}_{\delta,k,M}$ the restriction of 
the operator $Q_{\delta,k,M}$ to $\cB([0,M]^p)$, for any function 
$f\in \cC_b([0,M]^p)$ such that $\|f\|_\infty\le 1$ we have 
\begin{align*}
&
\!\!\!\!
\!\!\!\!
|\tilde{Q}_{\delta,k,M}f(x_1,\cdots,x_p)-\tilde{Q}_{\delta,k,M}f(y_1,\cdots,y_p)|M^{-kp}\\
\le &\sup_{z_\ell \in [0,M],p+1\le \ell \le (k+1)p}
|F(x_1,\cdots,x_p,z_{p+1},\cdots,z_{(k+1)p})-\\&
\quad\quad\quad\quad
\quad\quad\quad\quad
\quad\quad\quad\quad
\quad\quad\quad\quad
F(y_1,\cdots,y_p,z_{p+1},\cdots,z_{(k+1)p}|,
\end{align*}
and so the functions $\{\tilde{Q}_{\delta,k,M}f: f\in \cC_b([0,M]^p), \|f\|_\infty\le 1\}$ are  
uniformly equicontinuous on $[0,M]^p$.
Consequently, Arzel\`a-Ascoli theorem gives that $\tilde{Q}_{\delta,k,M}$ is 
compact as an operator on $\cC_b([0,M]^p)$. An application of 
  Theorem \ref{thm:exponent} then gives
  $${\int Q_{\delta,k,M}^{\lceil \frac{n}{kp}\rceil}({\bf 1}_{[0,M]^p})\dd \mu } =\lambda(\tilde{Q}_{\delta,k,M})^{\frac{n}{kp}+o(n)},$$
which along with \eqref{eq:lb+} gives
\begin{align}\label{eq:lb+1}
\liminf_{n\rightarrow\infty}\frac{1}{n}\log p_n\ge \frac{1}{kp}\log 
\lambda(\tilde{Q}_{k,\delta,M}).
\end{align}
\fa{Also note that for any $i\ge 1 $ we have 
$Q^i_{\delta,k,M}({\bf 1})={Q}^i_{\delta,k,M}({\bf 1}_{[0,M]^p})$, which immediately gives $\lambda(\tilde{Q}_{\delta,k,M})$ $=\lambda(Q_{\delta,k,M})$.}
The lower bound follows from this, on invoking \eqref{eq:lim+} and \eqref{eq:lb+1}. Thus we have verified that the log persistence exponent exists and equals $\lambda(Q_{\delta,k})^{1/kp}$, which a priori can depend on $\delta>0$. However the above argument works for any 
$\delta<\min(\delta_1,\delta_2/p)$, and 
so the persistence exponent does not depend on $\delta_1$, and in particular does not depend on the initial distribution as long as the latter has finite exponential moment.

For relating $\lambda(Q_{\delta,k})$ to
the eigenvalue equation, we  will invoke Theorem \ref{thm:exponent}, 
for which we need to show that $Q_{\delta,k}$ is 
compact on $\cC_b([0,\infty)^p)$. Since $\|Q_{\delta,k,M}-Q_{\delta,k}\|$ converges to $0$ as $M\rightarrow\infty$, it suffices to show that $Q_{\delta,k,M}$ is compact on $\cC_b([0,\infty)^p)$. Also as shown for the derivation of \eqref{eq:lim+} we have
$$
\sup_{\max_{1\le \ell \le p}x_\ell >M}|Q_{\delta,k}{\bf 1}(x_1,\cdots,x_p)|\le C e^{-\gamma M} \gamma^{-kp},
$$
which converges to $0$ as $M\rightarrow\infty$. This along with the compactness of $\tilde{Q}_{\delta,k,M}$ on $\cC_b([0,M]^p)$ implies that $Q_{\delta,k}$ is compact on $\cC_b([0,\infty)^p)$. If $\lambda(Q_{\delta,k})>0$,  Theorem~\ref{thm:exponent} 
  implies that
  there exists $\tilde{\psi}\in \cC_b([0,\infty)^p)$ such that  $\|\tilde{\psi}\|_\infty=1$, which is non-negative and satisfies
  \begin{align}\label{eq:eigen+}\int_{[0,\infty)^{kp}}\tilde{\psi}(x_{kp+1},\cdots,x_{(k+1)p})F(x_1,\cdots,x_{(k+1)p}) \prod_{\ell=p+1}^{(k+1)p}\dd x_\ell=\lambda^{kp} \tilde{\psi}(x_1,\cdots,x_p),
\end{align}
where $\lambda:=\lambda(Q_{\delta,k})^{1/kp}$.
Using Lemma  \ref{lem:k+} along with \eqref{eq:eigen+} gives
$$\tilde{\psi}(x_1,\cdots,x_p)\le \frac{C}{\lambda^{kp} \gamma^{kp}} e^{-\gamma \sum_{i=1}^p x_i}.$$
Plugging this bound back in \eqref{eq:eigen+}, another application of Lemma \ref{lem:k+}  gives
\begin{align*}
\lambda^{kp}\tilde{ \psi}(x_1,\cdots,x_p)=&\int_{(0,\infty)^{kp}}
\!\!\!
\tilde{\psi}(x_{kp+1},\cdots,x_{(k+1)p}) F(x_1,\cdots,x_{(k+1)p}) \prod_{\ell=p+1}^{(k+1)p}\dd x_\ell\\
\le &\frac{C}{\lambda^{kp} \gamma^{kp}} \int_{(0,\infty)^{kp}} e^{-\gamma \sum_{i=1}^p x_{kp+i}}F(x_1,\cdots,x_{(k+1)p}) \prod_{\ell=p+1}^{(k+1)p}\dd x_\ell\\
\le &\frac{C^2}{\lambda^{kp} \gamma^{2kp}} e^{-2\gamma \sum_{i=1}^p x_i},
\end{align*}
which gives $$\tilde{\psi}(x_1,\cdots,x_p)\le \frac{C^2}{\lambda^{2kp} \gamma^{2kp}} e^{-2\gamma \sum_{i=1}^p x_i}.$$
This, via an inductive argument gives that $\tilde{\psi}(x_1,\cdots,x_p)$ has super exponential decay, and so the function $h\tilde{\psi}\in \cC_b([0,\infty)^p)$. Thus $P_S^{kp}(h\tilde{\psi})$ is well defined, and \eqref{eq:eigen+} shows that $h\tilde{\psi}$ is an eigenfunction of $P_S^{kp}$ with eigenvalue $\lambda^{kp}$. It then follows by a telescopic argument similar to Theorem \ref{thm:exponent} that $P_S$ has an eigenvalue $\lambda$, and the corresponding eigenfunction $\psi\in \cC_b([0,\infty)^p)$ is non-negative.

  (b) As before, $\lambda>0$ follows from Proposition \ref{ppn:finite}. To show that $\lambda<1$, assume by way of contradiction that $\lambda=1$. Invoking part (a) there exists a non zero non-negative function $\psi$ on $[0,\infty)^p$ such that $\|\psi\|_\infty=1$ and $\psi=P_S\psi$. This implies $\psi=P_S^p\psi$.
By the proof of part (a), it follows that $\psi$ has super exponential tails, and so $\psi$ vanishes at $\infty$. Letting $$A:=\{(x_1,\cdots,x_p)\in [0,\infty)^p:\psi(x_1,\cdots,x_p)=\|\psi\|_\infty=1\},$$ we thus have that $A$ is compact, and so $\max(x_1,\cdots,x_p)$ has a  minimum on $A$. 
If the minimum equals $0$, then $(x_1,\cdots,x_p)={\bf 0}$ is a global maximum, and so plugging in $(x_1,\cdots,x_p)={\bf 0}$ we have
$$1=\int_{(0,\infty)} \psi(0,\cdots,0,y) \phi(y)\dd y\le \P(\xi_1>0),$$
which is a contradiction. Thus w.l.o.g.\ we may assume that the minimum of $\max(x_1,\cdots,x_p)$ over $A$ is $m>0$, and fix $(x_1,\cdots,x_p)\in A$ such that  $\max_{1\le i\le p}x_i=m$. Since
$$1=\psi(x_1,\cdots,x_p)= \E \Big(\psi(X_{p+1},\cdots,X_{2p})|X_1=x_1,\cdots,X_p=x_p\Big),$$
we must have 
\begin{align}
\P(X_{p+1}>0,\cdots,X_{2p}>0|X_1=x_1,\cdots,X_p=x_p)&=1\label{eq:prob1},\\
\P(\psi(X_{p+1},\cdots,X_{2p})=1|X_1=x_1,\cdots,X_p=x_p)&=1\label{eq:psi1}.
\end{align}
Since $\P(\xi_1<0)>0$, there exists $c<0$ such that for every $\varepsilon>0$ we have $\P(\xi_1\in [c-\varepsilon,c+\varepsilon])>0$, which along with \eqref{eq:prob1} gives
\begin{align}\label{eq:prob2}
  \P(\cap_{\ell=p+1}^{2p} \{X_{\ell}>0,
  |\xi_{\ell}-c|\leq \varepsilon\}|X_1=x_1,\cdots,X_p=x_p)>0.
\end{align}
Define the tuple $(x_{p+1},\cdots,x_{2p})$ by inductively setting $x_\ell:=c+\sum_{j=1}^p a_j x_{\ell-j}$ for $p+1\le \ell \le 2p$, and note that on the 
set $\{\cap_{\ell={p+1}}^{2p}|\xi_{\ell}-c|\leq \varepsilon\}$ we have $|X_{\ell+1}-x_{\ell+1}|\le \varepsilon+\max_{p+1\le j\le \ell}|X_\ell-x_\ell|$, $p+1\le \ell \le 2p$, which on using induction gives 
$$
\max_{p+1\le\ell\le 2p}|X_\ell-x_\ell|\le p\varepsilon.
$$
  We now claim that $(x_{p+1}.\cdots,x_{2p})\in [0,\infty)^p$. Indeed, if $x_\ell<0$ for some $p+1\le \ell \le 2p$, then by choosing $\varepsilon$ small we have $x_\ell+p\varepsilon<0$, and so
$X_\ell\le x_\ell+p\varepsilon<0$, which is a contradiction to
\eqref{eq:prob2}.
  
  We finally claim that  $\psi(x_{p+1},\cdots,x_{2p})=1$. By way of contradiction, if $\psi(x_{p+1},\cdots,x_{2p})<1$, there exists $\varepsilon>0$ such that $\psi(y_{p+1},\cdots,y_{2p})<1$ for all $(y_1,\cdots,y_p)$ such that $\max_{p+1\le \ell\le 2p} |y_\ell-x_\ell|\le p\varepsilon$. But then we have 
   $$\P(\psi(X_{p+1},\cdots,X_{2p})<1|X_1=x_1,\cdots,X_p=x_p)
   \ge \P(\{\cap_{\ell=p+1}^{2p}
   |\xi_{\ell}-c|\leq \varepsilon\})>0,$$
which is a contradiction to \eqref{eq:psi1}, verifying  $(x_{p+1},\cdots,x_{2p})\in A$. \fa{Finally, using $c<0$ gives
$x_{p+1}=c+\sum_{j=1}^p a_j x_{p+1-j}<m$. Continuing this by induction it is easy to check that  $\max(x_{p+1},\ldots,x_{2p})<m$, which is a contradiction. Thus we have verified that $\lambda<1$.}

(c)
Let $\{{\bf a}^{(r)}\}_{r=1}^\infty $ be a sequence in $\R^p$ converging to ${\bf a}^{(\infty)}$ such that $\sum_{j=1}^p|a_j^{(\infty)}|<1$,
and define $Q_{\delta,k}^{(r)}$ and $F_r$ accordingly, where $k$ is as in Lemma \ref{lem:k+}.
Then there exists $\eta>0$ such that $\sum_{j=1}^p|a_j^{(r)}|\le 1-\eta$ for all $r$ large enough. By Lemma \ref{lem:k+}  the constants $C,\gamma$ depend only on $\eta$, and hence we can choose $C,\gamma$ which works for all $r\ge 1$, and so we have the bound
\begin{align}\label{eq:r_unif}
\sup_{r\ge 1} F_r(x_1,\cdots,x_{(k+1)p})\le C e^{-\gamma\sum_{i=1}^{(k+1)p}x_i} .
\end{align}
Now fix $f\in \cB([0,\infty)^p)$ such that $\|f\|_\infty\le 1$, and $M>0$. If we have $\max_{1\le \ell \le p}x_\ell>M$, then invoking \eqref{eq:r_unif} gives
\begin{align}\label{eq:2019_1}
|Q_{\delta,k}^{(r)}f(x_1,\cdots,x_p)-Q_{\delta,k}^{(\infty)}f(x_1,\cdots,x_p)|\le 2C e^{-\gamma M}
\gamma^{-kp}
\end{align}
If $\max_{1\le \ell \le p}x_\ell\le M$, then splitting the integral depending on whether the integration is over $[0,M]^{kp}$ or not gives
\begin{align}
\notag&|Q_{\delta,k}^{(r)}f(x_1,\cdots,x_p)-Q_{\delta,k}^{(\infty)}f(x_1,\cdots,x_p)|\\
\notag\le &\int_{(0,\infty)^{kp}}|F_{r}(x_1,\cdots,x_{(k+1)p})-F_{\infty}
(x_1,\cdots,x_{(k+1)p})|\prod_{\ell=p+1}^{(k+1)p}\dd x_\ell\\
\label{eq:2019_2}\le &\frac{2Ckp}{\gamma^{kp}} e^{-\gamma M}+\varepsilon_{r,M}M^{kp},
\end{align}
where 
$$\varepsilon_{r,M}:=\sup_{(x_1,\cdots,x_{(k+1)p})\in [0,M]^{(k+1)p}}|
F_{r}(x_1,\cdots,x_{(k+1)p})-F_{\infty}(x_1,\cdots,x_{(k+1)p})|$$
converges to $0$ as $r\to\infty$, with $M$ fixed. 
Combining \eqref{eq:2019_1} and \eqref{eq:2019_2} gives
$$\|Q_{\delta,k}^{(r)}-Q_{\delta,k}^{(\infty)}\|\le \frac{2Ckp}{\gamma^{kp}} e^{-\gamma M}+\varepsilon_{r,M}M^{kp}.$$
On letting $r\rightarrow\infty$ followed by $M\rightarrow\infty$ gives
that $\|Q_{\delta,k}^{(r)}-Q_{\delta,k}^{(\infty)}\|$ converges to $0$ as $r\rightarrow\infty$, 
and so  $\lambda(Q_{\delta,k}^{(r)})$ converges to $\lambda(Q_{\delta,k}^{(\infty)})$ by Lemma \ref{lem:conti}. This completes the proof of part (c).
\end{proof}

\begin{proof}[Proof of Lemma \ref{lem:k+}]
Let $\delta>0$ be fixed as in the proof of Theorem \ref{thm:ar+}. Recall that we have
$\phi(t)\le C_2 e^{-\delta_2 |t|}$, where $\delta_2>p\delta$. 
Choose $\rho\in (0,1)$ such that $\sum_{j=1}^p|a_j|\rho^{-j}=1$. 
Fix $\varepsilon>0$ such that $p\delta(1+\varepsilon)<\delta_2$ and $\varepsilon<p \delta(1-\sum_{j=1}^p|a_j|)$.  For any positive integer $k$ (the exact choice of which is specified below in \eqref{eq:choose_k}), define the variables $(A_1,\cdots,A_{kp})$ inductively by setting
\begin{eqnarray*}
A_\ell-\sum_{j=1}^{\ell-1} |a_j|A_{\ell-j}&=&\delta(1+\varepsilon)\text{ if }1\le \ell\le p,\\
A_\ell-\sum_{j=1}^p |a_j|A_{\ell-j}&=&\min(\rho^\ell,\varepsilon)\text{ if }p+1\le \ell\le kp.
\end{eqnarray*}
Note that $A_\ell>0$ for all $1\le \ell\le kp$ by definition.
We now claim that 
\begin{align}\label{claim1}
\max_{1\le \ell \le kp}A_\ell<& \delta_2\text{ for all }k\ge 1,\\
\label{claim2}\max_{kp+1\le \ell \le (k+1)p}\sum_{j=\ell-kp}^p |a_j|A_{\ell-j}\le &(K+kp+p)\rho^{kp+1},\text{ for all  }k\ge 1,
\end{align} 
where $K:=p\delta(1+\varepsilon)\rho^{-p}$.
Note that \eqref{claim2} implies in particular that there exists $k$ such that 
\begin{align}\label{eq:choose_k}
\max_{kp+1\le \ell \le (k+1)p}\sum_{j=\ell-kp}^p |a_j|A_{\ell-j}<\delta.
\end{align}
Deferring the proof of \eqref{claim1} and \eqref{claim2} we will fix such a $k$, and finish the proof of the lemma. To this end, invoking \eqref{claim1} and using the bound 
$\phi(t)\le C_2e^{-A_\ell |t|}$ for all $t\in \R$ and 
$1\le \ell\le kp$ we have
\begin{align*}
F(x_1,\cdots,x_{(k+1)p}) 
\le &C_2^{kp}e^{\delta\sum_{j=1}^p(x_{kp+j}-x_j)}\prod_{\ell=p+1}^{(k+1)p}e^{-A_{(k+1)p+1-\ell}|x_\ell-\sum_{j=1}^p a_j x_{\ell-j}|}\\
\le &C_2^{kp}e^{\delta\sum_{j=1}^p(x_{kp+j}-x_j)}\prod_{\ell=p+1}^{(k+1)p}e^{-A_{(k+1)p+1-\ell}\Big( x_\ell-\sum_{j=1}^p |a_j| x_{\ell-j}\Big)}\\
=&C_2^{kp}\prod_{\ell=1}^{(k+1)p}e^{-\alpha_{(k+1)p+1-\ell} x_\ell},
\end{align*}
where 
$$
\alpha_\ell:=
\begin{cases} A_\ell-\sum_{j=1}^{\ell-1}A_{\ell-j}|a_j|-\delta&\text{if }1\le \ell\le p,
\\
A_\ell-\sum_{j=1}^pA_{\ell-j}|a_j|&\text{if }p+1\le \ell\le kp,
\\
\delta-\sum_{j=\ell-kp}^p|a_j|A_{\ell-j} & \text{if }kp+1\le \ell\le (k+1)p.\end{cases}
$$

By the choice of $(A_1,\cdots,A_{kp})$ we have  $\alpha_\ell>0$ for $1\le \ell\le (k+1)p$, which on setting $C:=C_2^{kp}$ and $\gamma:=\min_{1\le \ell\le (k+1)p}\alpha_\ell>0$ gives the desired conclusion. The uniformity of the choice of $C$ is immediate, and for the uniformity of $\gamma$, note that $$\gamma\ge \min\Big(\delta \varepsilon, \rho^\ell, \delta-(K+kp+p) \rho^{kp+1}\Big),$$
and these parameters are uniform over the parameter space $\sum_{j=1}^pa_j\le 1-\eta$ for any $\eta>0$.
\\

It thus remains to prove \eqref{claim1} and \eqref{claim2}, which we break down into a few steps.

First we show by induction on $\ell$ that
 \begin{equation}
   \label{eq-Aof}
   A_\ell\le \ell\delta(1+\varepsilon)
   \quad
   \mbox{\rm for $1\leq \ell\le p$.}
 \end{equation}
Indeed,  for $\ell=1$ we have $A_1=\delta(1+\varepsilon)$ by definition.
 If \eqref{eq-Aof}
holds for $1\le j\le \ell-1$, 
then using the formula for $A_\ell$ gives
\begin{align*}
  A_\ell&=\delta(1+\varepsilon)+\sum_{j=1}^{\ell-1}A_{\ell-j}|a_j|\le\delta(1+\varepsilon)+\max_{1\le j\le \ell-1}A_{\ell-j}\\
  &\le \delta(1+\varepsilon)+(\ell-1)\delta(1+\varepsilon)\le\ell \delta(1+\varepsilon),\end{align*}
which completes the induction.
 \\
 
 Next we show that
\begin{equation}
  \label{eq-Apof}
  A_\ell\le \delta p(1+\varepsilon) \quad 
  \mbox{\rm for all $1\le \ell\le kp$}.
\end{equation}
Indeeed, note that \eqref{eq-Apof} follows from
\eqref{eq-Aof} for $\ell\leq p$.
For larger $\ell$, use the definition of $A_\ell$ along with 
induction to note that, by the choice of $\varepsilon$,
 $$A_\ell\le \varepsilon+ (\sum_{j=1}^p|a_j|)\max_{1\le j\le p}A_{\ell-j}\le\varepsilon+ p\delta(1+\varepsilon)\sum_{j=1}^p|a_j| \le p\delta(1+\varepsilon).$$
 Note that \eqref{eq-Aof} and \eqref{eq-Apof} together yield
 \eqref{claim1}, since
  $p\delta(1+\varepsilon)<\delta_2$.

 We turn to the proof of \eqref{claim2} for which we first show that
 \begin{equation}
  \label{eq-AKof}
  A_\ell\le (K+\ell)\rho^\ell \quad \mbox{\rm for all $1\le \ell\le kp$},
\end{equation}
where $K=p\delta(1+\varepsilon)\rho^{-p}$ as before. 
The choice of $K$ and the observation that $A_\ell\le \ell \delta(1+\varepsilon)$ 
yields \eqref{eq-AKof}  for $1\le \ell\le p$.
For $\ell>p$, we proceed by induction.
  Assume \eqref{eq-AKof} holds for all 
  $1\le \ell'\le \ell-1$,
 and note that
\begin{align*}
A_\ell\le \rho^\ell+\sum_{j=1}^p|a_j|A_{\ell-j}
\le &\rho^\ell+\sum_{j=1}^p(K+\ell-j)|a_j|\rho^{\ell-j}\\
\le &\rho^\ell\Big[1+(K+\ell-1)\sum_{j=1}^p|a_j|\rho^{-j}\Big]=(K+\ell)\rho^\ell.
\end{align*}
This yields \eqref{eq-AKof}, which for $\ell\in [kp+1,(k+1)p]$ gives
$$\sum_{j=\ell-kp}^p|a_j|A_{\ell-j}\le (K+\ell)\rho^\ell\sum_{j=1}^p|a_j|\rho^{-j}=(K+\ell)\rho^\ell,$$
from which \eqref{claim2} follows trivially. 
\end{proof}

For the proof of Theorem \ref{thm:strict_ar}, we need the following lemma.
\begin{lem}\label{lem:logconcave}
  Suppose $\xi$ has a strictly positive log-concave
  density $\phi$ with respect to the Lebesgue measure on $\R$.
   Then, for any $\delta\geq 0$ and any bounded non-decreasing function $g$ on $\R$ ,
$$
\E[ g(\xi+\delta) | \xi +\delta>0] = \frac{ \E[ g(\xi+\delta) 1_{\{\xi+\delta>0\}} ] }{ \P( \xi+\delta > 0) } \geq \frac{ \E[ g(\xi) 1_{\{\xi>0\}} ] }{ \P( \xi > 0) } = \E[ g(\xi) | \xi>0].
$$
\end{lem}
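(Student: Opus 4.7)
The plan is to recast the inequality as an instance of the monotone likelihood ratio (MLR) property implying first-order stochastic dominance (FSD), so that any bounded non-decreasing $g$ gets compared via a standard one-crossing argument. Define the probability measures on $(0,\infty)$
$$\nu_1(dy) := \frac{\phi(y-\delta)}{\P(\xi+\delta>0)}\, dy, \qquad \nu_2(dy) := \frac{\phi(y)}{\P(\xi>0)}\, dy,$$
so that the claim reads precisely $\int g\, d\nu_1 \geq \int g\, d\nu_2$.

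The heart of the argument is to verify that the likelihood ratio $L(y) := \phi(y-\delta)/\phi(y)$ is non-decreasing in $y \in \R$. This is the only place where log-concavity enters. For any $y_1 < y_2$, the four numbers $y_1-\delta, y_2-\delta, y_1, y_2$ satisfy $(y_1-\delta)+y_2 = y_1+(y_2-\delta)$, with $\{y_1-\delta, y_2\}$ an ``outer'' pair and $\{y_1, y_2-\delta\}$ an ``inner'' pair for the same midpoint. Concavity of $\log\phi$ therefore yields $\log\phi(y_1) + \log\phi(y_2-\delta) \geq \log\phi(y_1-\delta) + \log\phi(y_2)$, which is exactly $L(y_2) \geq L(y_1)$.

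Given MLR, the remainder is textbook. Set $c := \P(\xi>0)/\P(\xi+\delta>0)$, so that $d\nu_1/d\nu_2 = c L$ is non-decreasing and $\int(cL-1)\, d\nu_2 = 0$. Being non-decreasing with mean zero against $\nu_2$, the function $cL-1$ has a single sign change at some $y^* \in (0,\infty)$, negative on $(0,y^*)$ and non-negative on $(y^*,\infty)$ (the degenerate case $cL\equiv 1$ $\nu_2$-a.e.\ makes the conclusion trivial). Subtracting the vanishing quantity $g(y^*)\int(cL-1)\, d\nu_2$ then gives
$$\int g\, d\nu_1 - \int g\, d\nu_2 = \int \bigl(g(y) - g(y^*)\bigr)\bigl(cL(y) - 1\bigr)\, d\nu_2 \geq 0,$$
since both factors in the integrand are non-positive on $(0,y^*)$ and non-negative on $(y^*,\infty)$ by monotonicity of $g$ and $L$.

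The MLR step is the only nontrivial one, and it is the obstacle to watch: one must deduce the correct direction of monotonicity of $L$ from log-concavity without slipping on signs or on the relative order of the four translates. Once MLR is in hand, the one-crossing inequality is a standard application of the Hardy--Littlewood--P\'olya style rearrangement; no measurability or integrability concerns arise since $g$ is bounded and $\phi$ is a density.
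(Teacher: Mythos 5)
Your proof is correct. The paper's proof is shorter only because it invokes Preston's FKG-type inequality (\cite{preston1974}, Theorem~3) applied to the two conditional densities of $\xi$ given $\xi>0$ and of $\xi+\delta$ given $\xi+\delta>0$ on $[0,\infty)$, leaving the verification of Preston's hypothesis from log-concavity to the reader. Your version makes this self-contained: the four-point majorization argument that gives monotonicity of the likelihood ratio $L(y)=\phi(y-\delta)/\phi(y)$ is precisely what underlies Preston's hypothesis, and you then replace the citation with the standard one-crossing derivation of stochastic dominance from MLR. The mathematical content is the same; you trade an external reference for a few extra lines of elementary argument. One cosmetic note: rather than asserting a ``single sign change at some $y^*$'', it is cleaner to say there exists $y^*$ with $cL-1\le 0$ on $(0,y^*)$ and $\ge 0$ on $(y^*,\infty)$, since in principle the crossing could occur on an interval (though here $\phi$ strictly positive and log-concave on $\R$ forces $L$ continuous, so a single crossing point does exist); your conclusion is unaffected either way, and you already handled the degenerate case $cL\equiv 1$.
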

We note that
one can construct even unimodal densities for which the conclusion of
Lemma~\ref{lem:logconcave} is false.

 \begin{proof}
The lemma follows from the inequality in \cite[Theorem~3]{preston1974}: one uses 
\begin{align*}
f_1(x)&:=(\int_0^\infty \phi(t) \dd t)^{-1} \phi(x) \one_{[0,\infty)}(x),\\
f_2(x)&:=(\int_{-\delta}^\infty \phi(t) \dd t)^{-1} \phi(x-\delta) \one_{[0,\infty)}(x).
\end{align*}
 One can check easily that the log concavity of $\phi$ implies the assumptions for $f_1, f_2$ required by \cite{preston1974}.
\end{proof}

We further remark that for finite subsets of $\R$, Lemma \ref{lem:logconcave} follows from Holley's inequality for finite lattices \cite{holley1974}.

\begin{proof}[Proof of Theorem \ref{thm:strict_ar}]
For proving strict monotonicity we will invoke Lemma \ref{lem:monotone} with $P=P^{{\bf b}}$ and $Q=P^{{\bf a}}$ and
$$h({\bf x}):=\frac{\P_{\xi}(\xi+\sum_{j=1}^p b_j x_{p+1-j}\geq
0)}{\P_\xi(\xi+\sum_{j=1}^p a_jx_{p+1-j}\geq 0)}.$$
For any $g\in \cB_{+,>}(S)$, we have
by Lemma \ref{lem:logconcave},
\begin{eqnarray*}
  \frac{P_S(g)}{Q_S(g)}&=&\frac{\E_{\xi} g(x_2,\cdots,x_p,\xi+\sum_{j=1}^p b_j x_{p+1-j}) 1_{\{\xi+\sum_{j=1}^p b_jx_{p+1-j}\geq 0\}}}{\E_{\xi} g(x_2,\cdots,x_p,\xi+\sum_{j=1}^p a_j x_{p+1-j}) 1_{\{\xi+\sum_{j=1}^p a_jx_{p+1-j}
  \geq 0\}}}
\\
&\ge &\frac{\P_\xi(\xi+\sum_{j=1}^p b_j x_{p+1-j}
\geq 0)}{\P_\xi(\xi+\sum_{j=1}^p a_j x_{p+1-j}\geq 0)} = h({\bf x}),
\end{eqnarray*}
showing that condition (i) holds. Proceeding to checking
 condition (ii), for any $g\in \cB_{+,>}(S)$ we have 
\begin{align*}
[P_{S}(g)]({\bf x})
=\P_\xi(\xi+\sum_{j=1}^p b_jx_{p+1-j}>0)[\widetilde{P}(g)]({\bf x}),
\end{align*}
where 
$$[\widetilde{P}(g)]({\bf x}):=\frac{\int_{y+\sum_{j=1}^p b_j x_{p+1-j}
\geq 0} g(x_2,\cdots,x_p,y+\sum_{j=1}^{p}b_j x_{p+1-j})\dd F(y)}{\int_{y+\sum_{j=1}^p b_j x_{p+1-j}
\geq 0} \dd F(y)}.$$
Since ${\bf b}\ge {\bf 0}$ we have that 
${\bf x}\mapsto \P(\xi+\sum_{j=1}^p b_j x_{p+1-j}\geq 0)$ is non-decreasing in ${\bf x}$, and so it suffices to show that $\widetilde{P}$ is  non-decreasing. To this end, for any $g\in \cB_{+,>}(S)$ and ${\bf x},{\bf y}\in S$ with ${\bf x}\le {\bf y}$, write
  $\xi_{\bf x}=\xi+\sum_{j=1}^p b_j x_{p+1-j}$ and
$\xi_{\bf y}=\xi+\sum_{j=1}^p b_j y_{p+1-j}$.
Then,
\begin{align*}
\frac{[\widetilde{P}(g)]({\bf y})}{[\widetilde{P}(g)]({\bf x})}
=&
  \frac{\E_{\xi} \left(g(y_2,\cdots,y_p,\xi_{\bf y}) 
  1_{\{\xi_{\bf y}\geq0\}} \,|\,\xi_{\bf y}
  \geq0\right)
}{\E_{\xi} \left(
g(x_2,\cdots,x_p,\xi_{\bf x}) 1_{\{\xi_{\bf x}\geq0\}}
\,|\,\xi_{\bf x}\geq0\right) }\\
\ge &
 \frac{\E_{\xi} \left(g(x_2,\cdots,x_p,\xi_{\bf y}) 
1_{\{\xi_{\bf y}\geq0\}} \,|\,\xi_{\bf y}\geq0\right)
}{\E_{\xi} \left(
g(x_2,\cdots,x_p,\xi_{\bf x}) 1_{\{\xi_{\bf x}\geq0\}}
\,|\,\xi_{\bf x}\geq0\right) }
\ge 1,
\end{align*}
where the first inequality uses the fact that $g$ is 
coordinate-wise increasing, and the second inequality uses Lemma 
\ref{lem:logconcave} and the positivity of ${\bf b}$.

Having verified that its conditions are satisfied,
we apply Lemma~\ref{lem:monotone} and get 
$[P_S^{n-p}({\bf 1})]({\bf x})
\ge [Q_{S,h}^{n-p}({\bf 1})]({\bf x})$, which, setting {\small $A_{\bf x}:=\{(Z_0,\ldots,Z_{p-1})={\bf x}\}$}, is the same as
$$\P^{\bf b}(\min_{p\le i\le n}Z_i>0|A_{\bf x})
\ge \E^{\bf a}\Big[\prod_{i=p}^n 1_{\{Z_i>0\}} h(Z_{i-p},\cdots,Z_{i-1})|
A_{\bf x}\Big].$$
Let ${\cal Z}_+=\{Z_i\geq 0, 0\leq i\leq n\}$.
Multiplying both sides of the last display
by $1_{\{\min_{0\le j\le p-1}Z_j>0\}}$,
taking expectations with respect to $\mu$ and rearranging gives
$$\P^{\bf b}({\cal Z}_+)\ge 
\P^{\bf a}( {\cal Z}_+ )\E^{\bf a}\Big[\prod_{i=p}^{n}h(Z_{i-p},\cdots,Z_{i-1})|{\cal Z}_+\Big].$$
By Proposition \ref{ppn:finite} we have $ \theta_F({\bf a})>0$, and so, given the above inequality, it suffices to show that
\begin{align}\label{eq:show}\liminf_{n\rightarrow\infty}\frac{1}{n}\log \E^{\bf a}\Big[\prod_{i=p}^{n}h(Z_{i-p},\cdots,Z_{i-1})|
   {\cal Z}_+\Big]>0.
\end{align}
For showing \eqref{eq:show}, 
we claim the existence of $k>1$ such that
\begin{align}\label{eq:show2}
\limsup_{n\to\infty} \frac{1}{n}\log \P^{\bf a}\Big({L}_n[k,\infty)\ge \frac{1}{4p}|
   {\cal Z}_+\Big)<0,\\
\limsup_{n\to\infty} \frac{1}{n}\log \P^{\bf a}\Big({L}_n[0,1/k]\ge \frac{1}{4p}|
 {\cal Z_+}\Big)<0 \label{eq:show3},
\end{align}
where $L_n^{\bf Z}:=\frac{1}{n}\sum_{i=p}^n \delta_{
  Z_i}$
is an empirical measure of total mass $\frac{n-p+1}{n}$.
Indeed, given \eqref{eq:show2} and \eqref{eq:show3}  we have 
\begin{align*}&\E^{\bf a}\Big[\prod_{i=p}^{n}h(Z_{i-p},\cdots,Z_{i-1})|
 {\cal Z}_+\Big]\\
 &\quad \ge (1+\eta)^{\frac{n}{2p}-o(1)}\P^{\bf a}\Big(L_n[1/k,k]\ge \frac{n-p+1}{n}-\frac{1}{2p}|
{ \cal Z_+}\Big)
\end{align*}
with $\eta:=-1+\inf_{ {\bf x}\in [1/k,k]^ph({\bf x})}$. Also we have $\eta>0$, since $\xi$ has a strictly positive density on the whole of $\R$, 
 which implies  that the continuous function $h$ is strictly greater than $1$ point wise on the compact set $[1/k,k]^p$.
   \eqref{eq:show} follows after applying $\log$, normalizing by $n$ 
 and taking limits.
\\
It thus remains to prove \eqref{eq:show2} and \eqref{eq:show3}. For this, recall that the initial distribution $\mu$ satisfies, for any 
$\lambda'\in (0,\delta)$,
\begin{align}\label{eq:exp_moment_finite}
\frac{1}{n}\log  \E_\mu 
\left(e^{\lambda_1'\sum_{j=0}^{p-1}Z_j}1_{\{\min_{0\le i\le p-1}Z_i>0\}}\right)
<\infty.
\end{align}
Proceeding to showing \eqref{eq:show2},
by the log concavity of $\phi$ there exist
$\lambda_0>0$, $\lambda_1\in (0,\delta)$ 
such that $\log \phi(x)\le \lambda_0-\lambda_1 |x|$ for all $x\in\R$, and so with $$\widetilde{L}:=\Big\{{\bf x}=(x_p,\cdots,x_n)\in [0,\infty)^{n-p+1}:|i\in [p,n]:x_i\ge k|\ge \frac{n}{4p}\Big\}$$ we have
\begin{eqnarray*}
  \P^{\bf a}\Big({L}_n[k,\infty)\ge \frac{1}{4p},
   {\cal Z}_+\Big|Z_0,\cdots,Z_{p-1}\Big)
&\le &\int\limits_{\widetilde{L}
}\prod_{i=p}^n\phi(x_i-\sum_{j=0}^pa_jx_{i-j})\dd x_i \\
&\le &\!\!\!\!e^{n\lambda_0} \!
  \int\limits_{\widetilde{L}}
    \prod_{i=p}^ne^{-\lambda_1|x_i-\sum_{j=0}^pa_jx_{i-j}|}\dd x_i
\\
&\le &\!\!\!\!e^{n\lambda_0}\!
\int\limits_{\widetilde{L}}
    \prod_{i=p}^ne^{-\lambda_1x_i+\lambda_1\sum_{j=0}^pa_jx_{i-j}}\dd x_i
\\
&\le &\!\!\!\!e^{n\lambda_0+\lambda_1'\sum_{j=0}^{p-1}x_j}
  \int\limits_{\widetilde{L}}\prod_{i=p}^ne^{-\widetilde{\lambda}_1x_i}\dd x_i,
\end{eqnarray*}
where $\lambda_1':=\lambda_1(\sum_{j=1}^p a_j)<\delta$, and $\widetilde{\lambda}_1:=\lambda_1-\lambda_1'>0$. 
Integrating both sides with respect to $1\{x_i\ge 0,0\le i\le p-1\}d\mu(x_0,\cdots,x_{p-1})$ gives 
\begin{align*}
&\P^{\bf a}\Big({L}_n[k,\infty)\ge \frac{1}{4p},
   {\cal Z}_+\Big)\\
   &\quad \le e^{n\lambda_0}\E e^{\lambda_1'\sum_{j=0}^{p-1}Z_j}
1_{\{\min_{0\le j\le p-1}Z_j>0\}}\widetilde{\lambda_1}^{-n}\P\Big(L_n^{\bf Y}[k,\infty)\ge \frac{1}{4p}\Big),
\end{align*}
where $(Y_i,p\le i\le n)$ are i.i.d.\ exponential random variables with parameter $\widetilde{\lambda}_1$, and $L_n^{\bf Y}:=\frac{1}{n}\sum_{i=p}^n\delta_{Y_i}$. Since $\E e^{\lambda_1'\sum_{j=0}^{p-1}Z_j}1_{\{\min_{0\le j\le p-1}Z_j>0\}}<\infty$ by \eqref{eq:exp_moment_finite}, it suffices to show that
\begin{align*}
\limsup_{k\rightarrow\infty}\limsup_{n\rightarrow\infty}\frac{1}{n}\log\P\Big(L_n^{\bf Y}[k,\infty)\ge \frac{1}{4p}\Big)=-\infty.
\end{align*}
But this follows on invoking Sanov's theorem to note that $L_n^{\bf Y}$ satisfies a large deviation principle with a good rate function on $M_1(0,\infty)$, the set of probability measures on $(0,\infty)$ with respect to the weak topology. 
Thus \eqref{eq:show2} holds, and a similar proof shows \eqref{eq:show3}. 
\end{proof}

\begin{proof}[Proof of Proposition \ref{thm:non_strict}]


Fix $\varepsilon>0$ such that $\P( (Z_0,\ldots, Z_{p-1}) \in (\varepsilon,\infty)^p )>0$. This exists by the choice of the initial distribution. Define an associated AR process $\{Z_i'\}_{i\ge 1}$ on the same probability space by setting
$$ 
Z_i' := \begin{cases}
  0 & i\in \{0,\ldots, p-2\}\\ \varepsilon & i=p-1\\
  \sum_{j=1}^p a_j Z_{i-j}' + \xi_i & i\geq p.
\end{cases}
$$ 
Since $a_i\geq 0$, on the set $\{ \min_{0\le i\le p-1} Z_i> \varepsilon\}$ we have $Z_n\geq Z_n'$ for all $n\geq p$. Thus,
\begin{eqnarray*}
  \P( \min_{0\le i\le n}Z_i \geq 0 ) &\geq & \P( \min_{0\le i \le p-1}Z_i>\varepsilon, \min_{p\le i\le n} Z_i > 0)
\\
& \geq& \P( \min_{0\le i\le p-1}Z_i> \varepsilon,\min_{p\le i\le n} Z_i' > 0)\\
& =& \P( \min_{0\le i\le p-1}Z_i>\varepsilon) \cdot \P( \min_{p\le i\le n}Z_i' > 0)
\end{eqnarray*}
and so it suffices to show that
\begin{align}\label{eq:review}
\limsup_{n\rightarrow\infty}\frac{1}{n}\log\P( \min_{p\le i\le n}Z_i' > 0)=0.
\end{align}
To this effect, use induction to note that
\begin{equation} \label{eqn:arrepr}
Z'_{i+p-1}=\sum_{j=0}^i b_{i-j} \xi_j
\end{equation}
where 
$\xi_0=\varepsilon$, $b_0:=1$ and $b_i:=\sum_{j=1}^p b_{i-j} a_{j}$ and $b_i=0$ for $i<0$.
Also, we claim that for any ${\bf a}\ge {\bf 0}$ such that $\sum_{j=1}^p a_j>1$ there exists $\alpha,\beta>0$ and a (unique) $\rho>1$ such that
\begin{equation}  \label{eqn:boundsonb}
 \alpha \rho^i \leq b_i \leq \beta \rho^i,\qquad i\ge 1.
\end{equation}
To see this, consider the function $f(\rho):=\sum_{j=1}^p a_j \rho^{-j}$. This function is strictly decreasing on $[1,\infty)$ and satisfies
  $f(1)=\sum_{j=1}^p a_j >1$ and $f(\infty)=0$. Therefore, there must be a (unique) $\rho>1$ with $f(\rho)=1$.
Using the definition of $\{b_j\}_{j\in \Z}$, it is easy to see that for this $\rho$ one can find $\alpha,\beta>0$ satisfying (\ref{eqn:boundsonb}).
Proceeding to verify \eqref{eq:review}, set
$$
 {\cal S}_n:=\bigcap_{i=1}^n \{ \xi_i > - \min(\kappa \rho^i i^{-2},M)\},
$$
with some fixed $0<\kappa < \varepsilon \alpha ( \beta \sum_{k=1}^\infty k^{-2})^{-1}$, and $M>0$.
We will show that on the set ${\cal S}_n$ we have $\min_{0\le i\le n+p-1}Z'_{i}>0$ for any fixed $M$. Given this, since $ \P(
 {\cal S}_n)\ge \P(\xi_1>-M)^n$ and $M$ is arbitrary, \eqref{eq:review} follows. 

It remains to show that persistence happens on $ {\cal S}_n$, for which use \eqref{eqn:arrepr} and \eqref{eqn:boundsonb} to note that
\begin{eqnarray*}
Z'_i&=& \sum_{j=0}^{i+p-1} b_{i-j+p-1} \xi_j
=  b_{i+p-1} \xi_0 + \sum_{j=1}^{i+p-1} b_{i+p-1-j} \xi_j \\
&\geq &\alpha \rho^{i+p-1} \varepsilon + \sum_{j=1}^{i+p-1} - \beta \rho^{i+p-1-j} \kappa \rho^j j^{-2}
\geq  \rho^{i} \rho^{p-1}(\varepsilon \alpha - \beta \kappa \sum_{j=1}^\infty j^{-2}) > 0,
\end{eqnarray*}
by the choice of $\kappa$, and so the proof is complete.

\end{proof}

\begin{proof}[Proof of Proposition \ref{ppn:finite}] 

 Since $0$ is in the interior of the support of $F$, there exists $\alpha<0,\beta>0$ such that $[\alpha,\beta]$ is contained in the support. Then for any $(a,b)\in (\alpha,\beta)$ we have $\P_F(\xi_1\in (a,b))>0$. Indeed, otherwise the complement of $(a,b)$ is a closed set of probability $1$, which implies $(a,b)$ is not in the support of $F$, a contradiction. 

 Let $N:=\max(1,\lceil\sum_{j=1}^p|a_j|\rceil)$ be a positive integer, and set $$L:=\min(\beta/N,-\alpha/N)>0.$$
Then we claim that $\P(Z_i\in (0,L),0\le i\le n)$ decays slower than some exponential rate. Indeed, setting $Y_n:=\sum_{j=1}^p a_j Z_{n-j}$ on this set, we have $|Y_n|< NL$. Now setting $I_k:=(-LN+\frac{(k-1)L}{2},-LN+\frac{kL}{2})$ for $k\in \{1,\ldots,4N\}$ note that if $Y_n\in I_k$, then on the set $\{Z_i\in [0,L),0\le i\le n-1\}$ we have
$$\P(Y_n+\xi_n\in[0,L)|Z_0,\cdots,Z_{n-1})\ge \P(\xi_n\in J_k),
$$
where $J_k:=(LN-\frac{(k-1)L}{2},LN-\frac{(k-2)L}{2})$.  Also note that $J_k\subset (\alpha,\beta)$ for any $k\in\{1,\ldots,4N\}$ by the
choice of $L$, as $\beta\ge LN$ and $\alpha\le -LN$. An inductive argument then
gives
$$\P(\cap_{i=0}^n \{Z_i\in [0,L)\})\ge \P(
  \cap_{i=0}^{p-1} \{Z_i\in[0,L)\})\Big(\min_{1\le k\le N}\P(\xi_1\in J_k)\Big)^{n-p},$$
from which the desired conclusion follows.
\end{proof}

\section{Proofs for the exponents in the concrete examples}\label{sec:comp-proof} 
We only give hints on how to solve the concrete eigenvalue equations.

\begin{proof}[Proof of Proposition \ref{ppn:example_ma_uniform}]
Recall from Theorem~\ref{thm:mainfty} that the eigenvalue equation in this case is $\lambda g(x) = \E[ g(\xi) 1_{\{\xi>-x\}}]$, $x\in\R$.

Let us start with some facts for distributions with bounded support: Assume $\xi\in(-a,b)$ almost surely. Then one can show that $g(x)=0$ for $x\leq -b$. Further, $g$ is constant on $[a,\infty)$. If $a>b$ then $g$ is constant even on $[b,\infty)$. If $a\leq b$ one finds that $g(x)=\lambda^{-1} g(a) \P( \xi > - x)$ for $x\in (-b,-a]$. 

Now assume that $\xi$ is uniformly distributed in $(-a,b)$. In this case, the eigenvalue equation becomes
\begin{equation} \label{eqn:concrete1}
\lambda g(x) = \int_{-x}^b g(y) \dd y \frac{1}{a+b}, \qquad x\in(-a,b).
\end{equation}
We split in cases.

(a) $a\le b$: In this case, we already know $g$ on $(-a,a)^c$ from the above observations.
For the range $x\in(-a,a)$, the functions $g(x) = \kappa(\cos( \alpha x) + \sin(\alpha x))$ can be seen to be the only solutions (e.g.\ by differentiating (\ref{eqn:concrete1}) twice), where necessarily $\alpha=\frac{1}{(a+b)\lambda}$. The restrictions of the integral equation are equivalent to (\ref{eqn:uniformmasoln}).
Since $\alpha=\frac{1}{(a+b)\lambda}$ and $a,b$ are known, this is a non-linear equation for $\lambda$. It has several solutions, but we are interested in the smallest possible value for $\lambda^{-1}$, which corresponds to the unique non-negative eigenfunction, as one can check.


%
%
%
%

(b) $a\geq b$: This case is actually simpler. We already know from the observations on distributions with bounded support that $g$ is zero left of $-b$ and constant right of $b$. Thus, it only remains to consider $x\in(-b,b)$. One can check that the functions $g(x) = \kappa ( \cos(\alpha x) + \sin(\alpha x))$ are the only solutions to the integral equation for $x\in(-b,b)$ with $\alpha=1/(\lambda(a+b))$ (e.g.\ by differentiating (\ref{eqn:concrete1}) twice). The restrictions from the integral equation are equivalent to $\cos(\alpha b) = \sin(\alpha b)$, which holds for $\alpha_k b = \pi/4+k\pi$, $k\in\Z$. Since we are interested in the largest possible value for $\lambda$ (which corresponds to the unique non-negative eigenfunction), the solution in this case is $\alpha_0 b = \pi/4$.
\end{proof}



\begin{proof}[Proof of Theorem~\ref{thm:example_ma_general_at_rho1}]
\fa{
The first observation is that the persistence probability is in fact distribution-free (for symmetric densities). This can be seen by representing the i.i.d.\ $(\xi_i)$ with the help of i.i.d.\ $(U_i)$ that are {\it uniform} on $[0,1]$: $\xi_i = F^{-1}(U_i)$, where $F$ is the distribution function of $\xi_1$. The fact that the $(\xi_i)$ has a symmetric density is equivalent to $-F^{-1}(u)=F^{-1}(1-u)$ for all $u\in[0,1]$. This shows
\begin{eqnarray*}
\P(\min_{1\leq i\leq n} Z_i \geq 0) 
&=& \P(\forall i \in\{1,\ldots,n\}  : \xi_i \ge -\xi_{i-1})
\\
& =& \P(\forall i \in\{1,\ldots,n\}  : F^{-1}(U_i) \ge -F^{-1}(U_{i-1}))
\\
& =& \P(\forall i \in\{1,\ldots,n\}  : F^{-1}(U_i) \ge F^{-1}(1-U_{i-1}))
\\
& =& \P(\forall i \in\{1,\ldots,n\}  : U_i \ge 1-U_{i-1})
\\
& =& \P(\forall i \in\{1,\ldots,n\}  : 2(U_i-1/2) + 2(U_{i-1}-1/2) \ge 0).
\end{eqnarray*}
This shows that the persistence probability is distribution free and the problem can be reduced to the density $\phi=\frac{1}{2}\one_{[-1,1]}$.
}

\fa{
In this case, the eigenvalue equation reads
$$
\lambda g(x) =[Kg](x):= \int_{-x}^1 g(z) \,\frac{ \dd z}{2}, \qquad x\in[-1,1].
$$
Differentiating twice yields $g''(x)=-(2\lambda)^{-2} g(x)$, whose only solutions (up to constant multiples) are of the form $g(x)=\sin( x / (2\lambda)) + \cos( x / (2\lambda))$. In order to satisfy not only the differential equation but also the original eigenvalue equation, one needs to demand that $g(-1)=0$, which gives the set of solutions $\lambda_k^{-1} = \pi/2 + 2\pi k$, $k\in\Z$. The positive eigenfunction (or equivalently the largest eigenvalue) is obtained for $k=0$. This identifies the persistence exponent.
}

\fa{
To see the explicit formula in the statement one easily sees that $(g_k)$ is an orthonormal basis of $L_2[-1,1]$, $\one=\sum_{k\in\Z} (-1)^k \sqrt{2} \lambda_k g_k$, $\E[g_k(\xi_0)]= (-1)^k \sqrt{2} \lambda_k$, and that the persistence probability equals $\E[ K^n \one (\xi_0)]$, the latter of which can then be readily computed.
}
\end{proof}

\begin{proof}[Proof of Proposition~\ref{ppn:example_ma_exponential_allq}]
By Theorem~\ref{thm:mainfty}, the eigenvalue equation reads:
$$
\lambda g(x) = \int_{\fa{\max(0,-a_1x)}}^\infty g(y) e^{-y} \dd y,\qquad x\in\R.
$$
One checks easily that $g(x):=e^{a_1 x /(1+a_1)} 1_{x\ge 0} + 1_{x\le 0}$ is a non-negative eigenfunction for the eigenvalue $\lambda=1+a_1$.
Note however that one needs to verify that $\lambda=1+a_1$ is the {\it largest} eigenvalue of this operator. To this effect, let $\beta\ge \lambda>0$ denote the largest eigenvalue of $K$, and use Theorem~\ref{thm:exponent} to get the existence of a non-negative, finitely additive measure $m$ on $\R$ such that $m(\R)=1$ and for every $\psi\in \cC_b(\R)$ we have
\begin{align}\label{eq:ma_exp}
\int_{-\infty}^\infty m(\dd x) \int_{\max(0,-a_1 x)} e^{-y} \psi(y) \dd y=\beta m(\psi).
\end{align}
Thus, setting $\psi=g$ in \eqref{eq:ma_exp} gives
$$\beta m(g)=\int_{-\infty}^\infty m(\dd x) \int_{\max(0, -a_1 x)} e^{-y} g(y) \dd y=\int_{-\infty}^\infty Kg(x)m(\dd x)=\lambda m(g),$$
where the last equality uses $Kg=\lambda g$. Thus to conclude $\beta=\lambda$, it suffices to show that $m(g)$ is not zero. {If $m(g)=0$, using the fact that $g$ is lower bounded by a positive constant on $(-\infty,L]$ gives $m(-\infty,L]=0$ for all $L\in\R$.
Consequently by finite additivity $m(L,\infty)=1$.} But invoking \eqref{eq:ma_exp} with $\psi(x)=1_{\{x>L\}}$ gives
$$\beta m(L,\infty)=\int_{-\infty}^\infty m(\dd x) \int_{y+a_1 x>0, y>L} e^{-y} \dd y\le \int_{-\infty}^\infty m(\dd x)\int_{y>L}e^{-y}\dd y=e^{-L},$$
which on taking limits as $L\rightarrow\infty$ gives $\beta=0$, which is a contradiction.
\end{proof}
%
\begin{proof}[Proof of Proposition \ref{ppn:ar_unif}]
Considering the eigenvalue equation in Theorem~\ref{thm:ar} gives the existence of a continuous non-negative function $g:[0,\infty)\mapsto [0,\infty)$ satisfying ($\lambda=\theta_F(-1)$ for short)
\begin{align*}
(a+b)\lambda g(x) = \int_x^b g(y- x) \dd y=\int_0^{b-x}g(y)\dd y, \qquad x \in(0,b],
\end{align*}
and $g(x)=0$ for $x>b$.
It is easy to check that the only solutions to this are given by multiples of $g(t)=\cos(\alpha_k t)$
with $\alpha_k b = \frac{\pi}{2} + \pi k$ for some $ k\in \mathbb{Z}$. This gives the corresponding eigenvalues 
$
\lambda_k=(-1)^k (\alpha_k(a+b))^{-1}
$
, the largest one of which is $\lambda_0$.
\end{proof}


\begin{proof}[Proof of Proposition~\ref{ppn:ar_exponential}]
%
One can check that $P_S \one = g$ with $g(x)=e^{a_1x}$. Therefore, the persistence probability can be computed as
$$
\E P_S^n \one (Z_0) 1_{\{Z_0>0\}} =\E P_S^{n-1} g (Z_0) 1_{\{Z_0>0\}}=\left( \frac{1}{1-a_1}\right)^{n-1} \E  g (Z_0) 1_{\{Z_0>0\}}.
$$
\end{proof}

{\bf Acknowledgement.} We thank the American Institute of Mathematics (AIM) for hosting  SQuaREs which brought the authors together and at which occasions the initial questions for this project were discussed. We thank Ohad Feldheim for help with the proof of Proposition \ref{thm:mafinite}, and Amir Dembo, Naomi Feldheim, and Fuchang Gao  for their helpful comments and suggestions. We would also like to thank the AE and an anonymous referee, whose comments have greatly improved the presentation of this paper.

\end{document}